\newenvironment{enumerate*}{
\begin{enumerate}[{\rm (i)}]
  \setlength{\itemsep}{3.5pt}
  \setlength{\parskip}{0pt}
}{\end{enumerate}}
\newenvironment{enumerate!}{
\begin{enumerate}[{\rm (I.)}]
  \setlength{\itemsep}{3.5pt}
  \setlength{\parskip}{0pt}
}{\end{enumerate}}
\newenvironment{enumeratenum}{
\begin{enumerate}[{\rm (1)}]
  \setlength{\itemsep}{3.5pt}
  \setlength{\parskip}{0pt}
}{\end{enumerate}}
\renewcommand*{\thefootnote}{\arabic{footnote}}
\numberwithin{equation}{section}
\newtheorem{theorem}{Theorem}[section]
\newtheorem{lemma}[theorem]{Lemma}
\newtheorem{proposition}[theorem]{Proposition}
\newtheorem{corollary}[theorem]{Corollary} 
\newtheorem{definition}[theorem]{Definition}
\newtheorem{example}[theorem]{Example}
\newtheorem*{remark}{Remark}
\newtheorem*{acknowledgements}{Acknowledgements}
\newtheorem*{thm1}{Theorem 1}
\newtheorem*{thm2}{Theorem 2}
\newtheorem*{thm3}{Theorem 3}
\newtheorem*{thm4}{Theorem 4}
\newtheorem*{thm5}{Theorem 5}
\newtheorem*{thm4.1}{Theorem 4.1}
\newtheorem*{thm4.2}{Theorem 4.2}
\newtheorem*{thm4.6}{Theorem 4.6}
\newtheorem*{prop4.7}{Proposition 4.7}
\newtheorem*{prop4.8}{Proposition 4.8}
\newtheorem*{prop4.9}{Proposition 4.9}
\newtheorem*{thm4.10}{Theorem 4.10}
\newtheorem*{prop3.4'}{Proposition 3.4$'$}
\newtheorem*{propA'}{Proposition A$'$}
\titleformat*{\section}{\large \bfseries}
\titleformat*{\subsection}{\bf}
\titleformat*{\subsubsection}{\large\bfseries}
\titleformat*{\paragraph}{\large\bfseries}
\titleformat*{\subparagraph}{\large\bfseries}
\begin{document}

\title{Rank conditions and amenability for rings associated to graphs} 
 
\author{
{\sc Karl Lorensen}\\
{\sc Johan \"Oinert}
}

\maketitle

\begin{abstract} 

We study path rings, Cohn path rings, and Leavitt path rings associated to directed graphs, with coefficients in an arbitrary ring $R$.  For each of these types of rings, we stipulate conditions on the graph that are necessary and sufficient to ensure that the ring satisfies either the rank condition or the strong rank condition whenever $R$ enjoys the same property. In addition, we apply our result for path rings and the strong rank condition to characterize the graphs that give rise to amenable path algebras and exhaustively amenable path algebras.

\vspace{10pt}

\noindent {\bf Mathematics Subject Classification (2020)}:  16P99, 16S88, 43A05, 43A07 
\vspace{5pt}

\noindent {\bf Keywords}:  path ring, path algebra, Cohn path ring, Cohn path algebra, Leavitt path ring, Leavitt path algebra, rank condition, unbounded generating number, strong rank condition, invariant basis number, amenable algebra, exhaustively amenable algebra, algebraically amenable algebra, properly algebraically amenable algebra. 
\end{abstract}
\let\thefootnote\relax

\section{Introduction}

There are three prominent types of rings associated to directed graphs: path rings, Leavitt path rings, and Cohn path rings. Traditionally, such rings are taken to be algebras with coefficients in a field or at least a commutative ring. In the present paper, however, we consider more general coefficient rings and investigate conditions on the graph that guarantee that the rings associated to the graph inherit certain properties from the coefficient ring. The two ring-theoretic properties that we consider are the rank condition and the strong rank condition. Both of these properties have garnered attention recently because of their relevance to the study of amenability (see \cite{Bartholdi2, KL, LO}).  Our results also have  applications to this area, supplying new insights about amenability and exhaustive amenability for path algebras over fields. 

A ring $R$ is said to satisfy the \emph{rank condition} if there does not exist a right $R$-module epimorphism $R^n\to R^{n+1}$ for any positive integer $n$. 
This is equivalent to the property that, for every positive integer $n$, there exists a finitely generated right $R$-module that cannot be generated by $n$ elements. 
For this reason, rings that satisfy the rank condition are often described as having \emph{unbounded generating number}, the term favored by the author of the concept, P. M. Cohn
\cite{CohnIBN, CohnFIR}.

The strong rank condition is the dual of the rank condition; that is, a ring $R$ satisfies the \emph{strong rank condition} if, for any positive integer $n$, there fails to exist a right $R$-module monomorphism $R^{n+1}\to R^n$. An equivalent formulation is that there is no finitely generated right $R$-module that contains an infinite linearly independent set. It is easy to see that, as indicated by the names, the strong rank condition implies the rank condition. But the converse does not hold: every free ring, for instance, satisfies the rank condition but not the strong rank condition.

If $E$ is a directed graph with a finite set $E^0$ of vertices, then we let $RE$ denote the path ring associated to $E$ over the ring $R$. This is the ring consisting of all $R$-linear combinations of paths of finite length, where the product of two paths is formed by concatenation, if that is possible, and defined to be zero otherwise. Our first result, Theorem 1, describes the graphs and coefficient rings that give rise to path rings that fulfill the strong rank condition. In the statement, we refer to a specific quasiordering on $E^0$ as well as to the notions of a hereditary subset of $E^0$ and an \emph{exclusive} cycle, which is one that can only share a vertex with a cycle possessing the same set of edges. 
The quasiordering is defined by $v\geq w$ if there is a path from $v$ to $w$. Moreover, a subset $X$ of $E^0$ is \emph{hereditary} if $v\in X$ whenever there is a vertex $x\in X$ such that $x\geq v$. The quasiordering on vertices induces one on cycles, and cycles that are maximal with respect to this quasiordering play a prominent role in Theorem 1 and throughout the paper. 

\begin{thm1}  Let $R$ be a ring and E a directed graph with finitely many vertices. Let $X$ be the smallest hereditary subset of $E^0$ that contains all the vertices of all the cycles of $E$. Then the statements (i), (ii), and (iii) below are equivalent.
\begin{enumerate*}
\item $RE$ satisfies the strong rank condition. 
\item $R$ satisfies the strong rank condition, and there is no right $RE$-module monomorphism $\left (RE\right )^2\to RE$.
\item $R$ satisfies the strong rank condition, and at least one of the assertions (1) or (2) holds:
\begin{enumeratenum}
\item $X\neq E^0$;
\item $E$ contains an exclusive maximal cycle.
\end{enumeratenum}
\end{enumerate*}
\end{thm1}

With the aid of Theorem 1, we prove the following result characterizing amenable path algebras over fields (see Definition 2.21). 

\begin{thm2} Let $\mathbb K$ be a field and $E$ a directed graph with $E^0$ finite. Let $X$ be the smallest hereditary subset of $E^0$ that contains all the vertices of all the cycles of $E$. Then the statements (i), (ii), (iii), and (iv) below are equivalent.
\begin{enumerate*}
\item $\mathbb KE$ is amenable.
\item $\mathbb KE$ satisfies the strong rank condition.
\item There is no right $\mathbb KE$-module monomorphism $\left (\mathbb KE \right )^2\to \mathbb KE$.
\item $X\neq E^0$, or $E$ contains an exclusive maximal cycle.
\end{enumerate*}
\end{thm2}

In our third theorem, we obtain a characterization of the graphs that give rise to path algebras that are exhaustively amenable (see Definition 2.21), a property that was introduced by L. Bartholdi \cite{Bartholdi} and also studied in \cite{Amenability, Elek}. Like Theorem~2, Theorem~3 was inspired by \cite[Examples 5.14, 5.15]{Amenability}. For the statement of the theorem, we require the following piece of notation: if $E$ is a directed graph and $X\subseteq E^0$, then we write $T(X):=\{v\in E^0: \mbox{there is an $x\in X$ such that $x\geq v$}\}.$

\begin{thm3}  Let $\mathbb K$ be a field and $E$ a directed graph with $E^0$ finite and nonempty. Let $X$ be the smallest hereditary subset of $E^0$ that contains all the vertices of all the cycles of $E$. Then $\mathbb KE$ is exhaustively amenable if and only if at least one of the conditions (i), (ii), (iii), or (iv) below is satisfied.
\begin{enumerate*}
\item $X=\emptyset$. 
\item $T(E^0\setminus X)$ contains a vertex that emits infinitely many edges.
\item $T(E^0\setminus X)$ contains a vertex that belongs to a cycle. 
\item  $E$ contains an exclusive maximal cycle.
\end{enumerate*}
\end{thm3}

Leavitt path rings and Cohn path rings are quotients of path rings; see Section 2 for the specific relations that are divided out in each case. The former rings, introduced in \cite{Abrams1, Ara2} for row-finite graphs and in \cite{Abrams2, Tomforde2} for general graphs, can be interpreted as algebraic versions of the Cuntz-Krieger graph $C^\ast$-algebras described in \cite{Raeburn}.  
Leavitt path rings have as special cases some of the universal rings studied by W. G. Leavitt in \cite{Leavitt2, Leavitt3, Leavitt} that fail to have invariant basis number. (A ring $R$ has \emph{invariant basis number} if $R^n\cong R^m\Leftrightarrow n=m$.)
Cohn path rings were 
first defined in \cite{Ara3} and obey one fewer relation than the Leavitt path rings associated with the same graph. These rings bear Cohn's name since they have as special cases the rings introduced by him in \cite{CohnIBN} that possess invariant basis number but fail to satisfy the rank condition. 

The statements of Theorems 1 and 2 (but not Theorem 3) also hold if the path ring $RE$ and the path algebra $\mathbb KE$ are replaced, respectively, by the Cohn path ring $C_R(E)$ and Cohn path algebra $C_{\mathbb K}(E)$ (see Corollary 4.10 and Theorem 4.12). Furthermore, corresponding results, with the same condition on the graph, hold for Cohn path rings and the rank condition (see Corollary 4.11 and Theorem 4.12). For a path ring $RE$, on the other hand, the rank condition is always inherited from the coefficient ring $R$, regardless of the structure of the graph $E$ (see Proposition 3.1).

Theorem 4 below describes the graphs $E$ that give rise to Leavitt path rings $L_R(E)$ that satisfy the strong rank condition if the coefficient ring $R$ does. These graphs are characterized by conditions closely resembling the two in Theorem 1(iii). The only difference is that the set $X$ must be taken to be larger:  it is now the smallest hereditary and saturated set of vertices containing all the vertices of all the cycles. By \emph{saturated}, we mean that, for any regular vertex $v$, if $r\left (s^{-1}\left (v\right )\right )\subseteq X$, then $v\in X$.  Here $r$ and $s$ represent the range and source functions, respectively, from the set of edges $E^1$ to $E^0$. Moreover, a vertex $v$ is \emph{regular} if $0<|s^{-1}(v)|<\aleph_0$. 

\begin{thm4}  Let $R$ be a ring and E a directed graph with finitely many vertices. Let $X$ be the smallest hereditary and saturated subset of $E^0$ containing all the vertices of all the cycles of $E$. Then the statements (i), (ii), and (iii) below are equivalent. 
\begin{enumerate*}
\item $L_R(E)$ satisfies the strong rank condition.
\item $R$ satisfies the strong rank condition, and there is no right $L_R(E)$-module monomorphism $L_R(E)\oplus L_R(E)\to L_R(E)$. 
\item $R$ satisfies the strong rank condition, and at least one of the statements (1) or (2) holds:
\begin{enumeratenum}
\item $X\neq E^0$;
\item $E$ contains an exclusive maximal cycle.
\end{enumeratenum}
\end{enumerate*}
\end{thm4}

As articulated in Theorem 5 below, the graph-theoretic conditions in Theorem 4(iii) also characterize the graphs that give rise to Leavitt path rings that satisfy the rank condition. In fact, these same conditions first appeared in the work of P.  Ara, K. Li, F. Lled\'o, and J. Wu  \cite[Theorem 5.10]{Amenability}, who showed that they are also necessary and sufficient to ensure that a Leavitt path algebra over a field is amenable. 

\begin{thm5}  Let $R$ be a ring and E a directed graph with finitely many vertices. Let $X$ be the smallest hereditary and saturated subset of $E^0$ containing all the vertices of all the cycles of $E$. Then the statements (i), (ii), and (iii) below are equivalent. 
\begin{enumerate*}
\item $L_R(E)$ satisfies the rank condition.
\item $R$ satisfies the rank condition, and there is no $L_R(E)$-module epimorphism $L_R(E)\to L_R(E)\oplus L_R(E)$. 
\item $R$ satisfies the rank condition, and at least one of the statements (1) or (2) holds:
\begin{enumeratenum}
\item $X\neq E^0$;
\item $E$ contains an exclusive maximal cycle.
\end{enumeratenum}
\end{enumerate*}
\end{thm5}

The special case of Theorem 5 where $R$ is a field and the graph has finitely many edges was proved in \cite{UGN}. The arguments in that paper rely on the description contained in \cite[Theorem 3.5]{Ara2} of the monoid consisting of the isomorphism classes of finitely generated projective modules for a Leavitt path algebra over a field. However, there is no known, useful description of that monoid for Leavitt path rings over general coefficient rings. Moreover, even if there were such a description, it would not be of any apparent benefit in studying the strong rank condition. Hence, in proving both Theorems 4 and 5, we adopt instead the approach employed by Ara, Li, Lled\'o, and Wu \cite{Amenability} to deduce the characterization of amenable Leavitt path algebras alluded to above. We establish a key result, Proposition 2.20, that is specifically designed to allow us to adapt their arguments to our context and aims. 
Two important preliminary results, Proposition 4.3(ii) and Lemma 4.7, required for the proofs of Theorems 4 and 5, are also drawn from \cite{Amenability}. 

\begin{acknowledgements}{\rm The first author thanks Blekinge Tekniska H\"ogskola in Karlskrona, Sweden for generously hosting him from February to May 2023, when some of the groundwork was laid for this paper.

Both authors also wish to underscore the enormous influence that Ara, Li, Lled\'o, and Wu's paper \cite{Amenability} had on the present work. This applies particularly, but not solely, to their results and arguments pertaining to Leavitt path algebras and their two illuminating examples of amenable path algebras. 

Finally, we thank an anonymous referee for their helpful feedback.
}
\end{acknowledgements}

\section{Background and preliminary results}

In this section, we explain our terminology and notation, as well as expound some basic facts about the rank condition, the strong rank condition, and rings associated to graphs. 
Before delving into rings, we define the notion of a quasiordering, also known as a preordering, which plays a salient role in all of our theorems and their proofs. 
A \emph{quasiordering} $\geq$ on a set $X$ is a relation that is reflexive and transitive. A {\it maximal} element of a set $X$ with respect to a quasiordering $\geq$ on $X$ is an element $m\in X$ such that, for all $x\in X$, 
\[x\geq m\ \Longrightarrow\ m\geq x.\]
Obviously, quasiorderings on finite sets always admit at least one maximal element. This fact will be relied upon repeatedly in the sequel. 

The present paper is exclusively concerned with rings with a multiplicative identity element, as the rank and strong rank conditions have historically only been studied for such rings. Indeed, some complications arise in trying to extend these notions to rings without identity. Due to this focus, we will always use the term {\it ring} to mean an associative ring $R$ with a multiplicative identity element, which we also call a {\it unit element} and denote by either $1$ or $1_R$. Furthermore, unless we explicitly state otherwise, a subring must possess the same unit element as the overring, and ring homomorphisms will be required to map the unit element of the domain to that of the codomain. 

By a {\it module}, we will mean a right module unless we indicate otherwise. Also, the unit element of our ring will always be assumed to be an identity element for our module multiplication as well. 

If $R$ is a ring and $n$ a positive integer, then $M_n(R)$ represents the ring of $n\times n$ matrices over $R$.

An {\it $R$-ring} $S$ is a ring that is also an $R$-bimodule such that the following three conditions are satisfied for all $r\in R$ and $s, s'\in S$:
\begin{enumerate*}
\item $(ss')r=s(s'r)$;
\item $(rs)s'=r(ss')$;
\item $(sr)s'=s(rs')$.
\end{enumerate*}
Notice that (iii) implies that $r\cdot 1_S=1_S\cdot r$ for all $r\in R$. If $R$ is commutative and $sr=rs$ for all $r\in R$ and $s\in S$, then $S$ is referred to as an {\it $R$-algebra}. 

Let $R$ be a ring, and let $S, T$ be $R$-rings. The ring $S\times T$ can be made into an $R$-ring by giving it the $R$-bimodule structure of $S\oplus T$. The notation that we use for this $R$-ring is also $S\oplus T$.  

We will now define the various rings that are associated to a directed graph $E$. Since we are only interested in rings with unit elements, it is imperative that we confine ourselves to graphs with finitely many vertices. Denote the sets of vertices and edges of $E$ by $E^0$ and $E^1$, respectively, and assume that $E^0$ is finite. Furthermore, let $r,s:E^1\to E^0$ be the range and source functions, respectively. A {\it path} of {\it length} $n\geq 1$ in $E$ is a sequence of $n$ edges in which the range of each edge except the last coincides with the source of the succeeding edge. If $\alpha$ is such a path, then the {\it source}, $s(\alpha)$, of $\alpha$ is defined to be the source of the first edge in $\alpha$, and the {\it range}, $r(\alpha)$, of $\alpha$ is the range of the last edge in $\alpha$. 

A {\it path of length $0$} is just a vertex; also, for any $v\in E^0$, we define $s(v):=r(v):=v$. For any path $\alpha$, we employ $|\alpha|$ to denote the length of $\alpha$.  In addition, the set of vertices on a path $\alpha$ is denoted ${\rm Vert}(\alpha)$ and the set of edges by ${\rm Edge}(\alpha)$. The range of a path is also referred to as its \emph{terminal vertex}, or \emph{terminus}. 

 A {\it cycle} is a path $\omega:=(e_1,\dots,e_n)$ of length $n>0$ such that $s(e_1)=r(e_n)$ and $s(e_i)=s(e_j)\Longleftrightarrow i=j$. The vertex $s(e_1)$ is called the {\it basepoint} of $\omega$. 
 
Let $v\in E^0$. If $s^{-1}(v)$ is infinite, then $v$ is called an {\it infinite emitter}. If $r^{-1}(v)$ is infinite, then $v$ is an {\it infinite receiver}. If $s^{-1}(v)$ is both nonempty and finite, then $v$ is referred to as a {\it regular} vertex. The set of all regular vertices is denoted ${\rm Reg}(E)$. 

If $R$ is a ring, then the {\it path ring} of $E$ over $R$, denoted $RE$, is defined to be the $R$-ring generated by the set $E^0\cup E^1$ subject only to the following relations:
\begin{enumerate*}
\item $rv=vr$ and $re=er$ for all $v\in E^0$, $e\in E^1$, and $r\in R$;
\item for all $v, w\in E^0$, $v^2=v$ and 
$vw=0$ if $v\neq w$; 
\item $e=s(e)e=er(e)$ for all $e\in E^1$;
\item $\displaystyle{\sum_{v\in E^0} v = 1}$.
\end{enumerate*} 

Condition (iv) in the above definition may strike the reader as redundant; indeed, it is normally not included in the definition of $RE$. We remind the reader, however, that our rings must always possess a unit element. Hence, since $RE$ is a quotient of the free $R$-ring (with unit element!) on the generating set $E^0\cup E^1$, omitting (iv) would result in a different ring, namely, 
one isomorphic to $RE\times R$. Furthermore, we remark that the sum on the left in (iv) should be interpreted as $0$ if $E^0=\emptyset$. Therefore, in this case, the ring $RE$ is the zero ring. 

We identify paths in the graph $E$ with the products of their edges in the ring $RE$. The set of all paths, including those of length $0$, in $RE$ is denoted ${\rm Path}(E)$. It is quite easy to see that $RE$ is freely generated as both a right and left $R$-module by ${\rm Path}(E)$.  

If $\alpha, \sigma\in {\rm Path}(E)$, then $\sigma$ is a {\it subpath} of $\alpha$ if there are paths $\alpha_1, \alpha_2$ such that $\alpha=\alpha_1\sigma\alpha_2$. If, in addition, we have
that $\sigma\neq \alpha$, then $\sigma$ is called a \emph{proper subpath} of $\alpha$. 

We now define the Leavitt path ring and the Cohn path ring of $E$ over $R$. We will subsume both of these under a more general construction. This generalization, introduced in \cite{Ara3}, can also be found in G. Abrams, Ara, and M. Siles Molina's monograph \cite[Definition 1.5.9]{LPA}, albeit under a different name and notation than we employ here. Let $V$ be a subset of ${\rm Reg}(E)$. Furthermore, for every $e\in E^1$, we let $e^\ast$ be a new symbol, called a {\it ghost edge}. Write $\left (E^1\right )^\ast:=\{e^\ast:e\in E^1\}$. The {\it relative Leavitt path ring} of $E$ over $R$ {\it with respect to $V$}, denoted $L^V_R(E)$, is the $R$-ring generated by $E^0\cup E^1\cup \left (E^1\right )^\ast$  and subject to the relations (i)-(iv) above and the relations (1)-(4) below: 
\begin{enumeratenum}
\item $re^\ast=e^\ast r$ for all $r\in R$; 
\item $e^\ast=r(e)e^\ast=e^\ast s(e)$ for all $e\in E^1$;
\item for all $e,f\in E^1$, $e^\ast f=0$ if $e\neq f$ and $e^\ast e=r(e)$;
\item $\sum_{e\in s^{-1}(v)} ee^\ast = v$ for every $v\in V$. 
\end{enumeratenum}
The ring $L^{\rm Reg(E)}_R(E)$ is called the {\it Leavitt path ring} of $E$ over $R$ and written $L_R(E)$. Moreover, the ring $L^\emptyset_R(E)$ is the {\it Cohn path ring} of $E$ over $R$ and denoted $C_R(E)$. 

We call the reader's attention again to our deviation from the standard nomenclature pertaining to these rings. In \cite{LPA} and much of the rest of the literature, our notion of a relative Leavitt path ring with respect to a subset $V$ of ${\rm Reg}(E)$ is known as a {\it relative Cohn path ring} and denoted $C_R^V(E)$. We prefer our designation because relation (4) played a crucial role in W. G. Leavitt's original constructions \cite{Leavitt3, Leavitt2, Leavitt} of rings that fail to possess invariant basis number. Hence it seems fitting that any structure based upon equation (4) should bear his name. On the other hand, in the case of Cohn's \cite{CohnIBN} constructions, it was precisely the absence of relation (4) that made the rings interesting since it endowed them with invariant basis number, even though they failed to satisfy the rank condition. Another apposite term used for a relative Leavitt path ring is \emph{Cohn-Leavitt path ring}, the original name from \cite{Ara3}.  

If $e_1,\dots, e_n\in E^1$ such that $r(e_i)=s(e_{i+1})$ for $i=1,\dots,n-1$ and $\alpha:=e_1\dots e_n$, then we define 
the {\it ghost path} $\alpha^\ast\in L_R^V(E)$ by $\alpha^\ast:=e_n^\ast\dots e^\ast_1$. Also, for $v\in E^0$, we define $v^\ast:=v$. With this notation, we can describe a subset of $C_R(E)$ that generates $C_R(E)$ freely as an $R$-module. 

\begin{proposition}[{\cite[Proposition 1.5.6]{LPA}}] Let $E$ be a directed graph with $E^0$ finite, and let $R$ be a ring. Then $C_R(E)$ is freely generated as a left and right $R$-module by the set
\[\mathlarger{\Sigma_E}:=\{\alpha \beta^\ast\ :\ \alpha, \beta\in {\rm Path}(E)\ \mbox{\rm and}\ r(\alpha)=r(\beta)\}.\] 
\end{proposition}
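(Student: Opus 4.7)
The plan is to prove the two halves of the statement separately: that $\Sigma_E$ spans $C_R(E)$ as a right (and left) $R$-module, and that the elements of $\Sigma_E$ are $R$-linearly independent.

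For the spanning part, I would use a normal-form reduction. Every element of $C_R(E)$ is an $R$-linear combination of finite products of generators drawn from $E^0 \cup E^1 \cup (E^1)^*$. The key rewrite rule is relation (2), $e^*f = \delta(e,f)\, r(e)$, together with $e^* = e^*s(e)$ from (1): any occurrence of a ghost letter immediately followed by a real edge in such a product may be replaced by a vertex (when the letters match) or by zero. Iterating this reduction, and using (iii) and (ii) to absorb vertices into adjacent letters, rewrites every product as a scalar multiple of something of the shape $\alpha\beta^*$, where $\alpha, \beta$ are (possibly length-zero) paths. The source--range compatibility relations in (1) and (iii) then force the result to vanish unless $r(\alpha) = r(\beta)$, placing the nonzero reductions inside $\Sigma_E$.

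The substantive half is the $R$-linear independence, and I expect the main obstacle to lie there. My strategy is to exhibit a faithful representation. Let $F$ be the free right $R$-module with basis $\{x_\sigma : \sigma \in \Sigma_E\}$, and define $R$-linear endomorphisms $L_v, L_e, L_{e^*}$ of $F$ that mimic the action of the generators by left multiplication: $L_v$ acts diagonally, scaling $x_{\alpha\beta^*}$ by $\delta(v, s(\alpha))$; $L_e$ prepends $e$ to $\alpha$, provided $r(e) = s(\alpha)$, and in particular sends $x_{u\beta^*}$ with $u = r(e)$ to $x_{e\beta^*}$; and $L_{e^*}$ deletes a leading $e$ from $\alpha$ when $\alpha = e\alpha'$, producing $x_{\alpha'\beta^*}$, while when $\alpha$ equals the vertex $s(e)$ it appends $e$ to $\beta$, sending $x_{s(e)\beta^*}$ to $x_{r(e)(\beta e)^*}$, and is zero in all other cases.

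A careful but routine check verifies that these operators satisfy the defining relations (i)--(iv), (1), and (2) of $C_R(E)$; relation (3) is vacuous here since $V = \emptyset$ in the Cohn path ring. The universal property then yields an $R$-ring homomorphism $\phi: C_R(E) \to \mathrm{End}_R(F)$, and a short induction on $|\alpha|$ and $|\beta|$ gives the identity $\phi(\alpha\beta^*)(x_{s(\beta)}) = x_{\alpha\beta^*}$, where $x_{s(\beta)}$ is the basis vector of the length-zero element $s(\beta) \in \Sigma_E$. Consequently, any purported $R$-linear relation $\sum_i r_i \alpha_i \beta_i^* = 0$ in $C_R(E)$, upon applying $\phi$ and evaluating at $x_v$ for each $v \in E^0$, yields $\sum_{i:\, s(\beta_i) = v} r_i\, x_{\alpha_i \beta_i^*} = 0$ in $F$, forcing every $r_i$ to vanish by freeness. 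The principal bookkeeping challenge is the two-case definition of $L_{e^*}$ together with the verification of relation (2) across all combinations of first letters of $\alpha$; once that is in hand, the rest is mechanical.
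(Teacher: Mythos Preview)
Your argument is correct. The spanning claim is exactly the standard normal-form reduction, and your faithful-representation construction for linear independence is sound: the case split in the definition of $L_{e^\ast}$ is precisely what is needed to absorb a ghost edge either by cancelling a leading real edge or by lengthening the ghost tail, and the verification of relation~(2) goes through because after $L_f$ the $\alpha$-part always has positive length, so only the first case of $L_{e^\ast}$ is relevant. The identity $\phi(\alpha\beta^\ast)(x_{s(\beta)}) = x_{\alpha\beta^\ast}$ then separates the basis elements as you describe.

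The paper does not supply its own proof here: it simply cites \cite[Proposition~1.5.6]{LPA}, observes that the argument there (stated for fields) is valid over an arbitrary ring, and later remarks that the result may also be deduced from Bergman's diamond lemma \cite{diamond}. Your approach is thus a self-contained alternative to both of those. The diamond-lemma route has the advantage of handling Proposition~2.2 (the basis for $L_R^V(E)$ with nonempty $V$) in the same breath, since one simply adds the rewrite rule $e_v e_v^\ast \to v - \sum_{e \neq e_v} e e^\ast$ and checks resolvability of overlaps; your explicit representation, by contrast, is more elementary and avoids invoking any external machinery, but would require a modified module (on which relation~(3) holds) to extend to the relative case.
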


The hypothesis in the reference for the above proposition is that $R$ is a field, but the proof provided  is valid for any ring. It follows from the proposition that $\Sigma_E$ also generates $L^V_R(E)$ as an $R$-module, but $\Sigma_E$ may not be $R$-linearly independent if $V\neq \emptyset$. For any choice of $V\subseteq {\rm Reg}(E)$, the elements of $\Sigma_E$ can be multiplied in the ring $L^V_R(E)$ as follows (see \cite[Lemma 1.2.12(i)]{LPA}).

\begin{equation}
\left (\alpha_1\beta_1^\ast\right )\left (\alpha_2\beta_2^\ast\right ) = 
\begin{cases}
\alpha_1\alpha_2^\prime\beta_2^\ast & \mbox{if $\alpha_2=\beta_1\alpha_2^\prime$ for some $\alpha_2^\prime\in {\rm Path}(E)$}\\
\alpha_1\left (\beta_1^\prime\right )^\ast \beta_2^\ast & \mbox{if $\beta_1=\alpha_2\beta_1^\prime$ for some $\beta_1^\prime\in {\rm Path}(E)$}\\
0 & \mbox{otherwise.}
\end{cases}
\end{equation}

The generating set $\Sigma_E$  can be trimmed down to form an $R$-basis for $L_R^V(E)$ for any $V\subseteq {\rm Reg}(E)$. Such a basis is described in \cite[Proposition 1.5.11]{LPA}. Although the proposition in the reference is stated for $R$ a field, the argument applies equally well to an arbitrary ring. 

\begin{proposition}[{\cite[Proposition 1.5.11]{LPA}}]  Let $R$ be a ring and $E$ a directed graph with $E^0$ finite.  Let $\Sigma_E$ be as defined in Proposition 2.1. Furthermore, let $V$ be a subset of ${\rm Reg}(E)$, and, for each $v\in V$, choose an edge $e_v\in s^{-1}(v)$.  Define the set $\Lambda\subseteq L^V_R(E)$ by
\[\Lambda:=\{\alpha e_v e_v^\ast \beta^\ast\ :\ v\in V\ \mbox{\rm and}\ \alpha, \beta\in {\rm Path}(E)\ \mbox{\rm such that}\ r(\alpha)=r(\beta)=v\}.\] 
Then $L_R^V(E)$ is free as a left and right $R$-module on the set $\Sigma_E\setminus \Lambda$.
\end{proposition}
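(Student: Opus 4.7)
The plan is to leverage Proposition 2.1, which furnishes $\Sigma_E$ as a free $R$-basis of $C_R(E)$, and to analyze how relation (3) cuts $L_R^V(E)$ out of $C_R(E)$. Write $L_R^V(E) = C_R(E)/I$, where $I$ is the two-sided ideal generated by the relators $\rho_v := v - \sum_{e \in s^{-1}(v)} e e^*$ for $v \in V$, and let $F$ denote the free $R$-module on $\Sigma_E \setminus \Lambda$. It suffices to prove that the canonical $R$-linear map $\tau : F \to L_R^V(E)$, induced by the inclusion $\Sigma_E \setminus \Lambda \hookrightarrow C_R(E)$ followed by the quotient, is an isomorphism of $R$-modules.

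For surjectivity, observe that every $\alpha e_v e_v^* \beta^* \in \Lambda$ satisfies
\[\alpha e_v e_v^* \beta^* \;=\; \alpha \beta^* \;-\; \sum_{\substack{e \in s^{-1}(v) \\ e \neq e_v}} \alpha e e^* \beta^*\]
in $L_R^V(E)$, an immediate consequence of relation (3) together with $\alpha v = \alpha$ and $v \beta^* = \beta^*$ (valid because $r(\alpha) = r(\beta) = v$). A short check shows each term on the right lies in $\Sigma_E \setminus \Lambda$: $\alpha e e^* \beta^* = (\alpha e)(\beta e)^*$ could only belong to $\Lambda$ if $e = e_w$ for some $w \in V$, forcing $w = s(e) = v$ and hence $e = e_v$, contradicting $e \neq e_v$. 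Therefore $\tau$ is surjective.

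For injectivity, the strategy is to construct a left inverse $\bar{\chi} : L_R^V(E) \to F$ of $\tau$. Define an $R$-linear map $\chi : C_R(E) \to F$ on the basis $\Sigma_E$ by setting $\chi(\sigma) := \sigma$ if $\sigma \in \Sigma_E \setminus \Lambda$, and $\chi(\alpha e_v e_v^* \beta^*) := \alpha \beta^* - \sum_{e \in s^{-1}(v),\, e \neq e_v} \alpha e e^* \beta^*$ otherwise. The assignment is unambiguous because any element of $\Lambda$ has a unique expression in the indicated form: the last edge of $\beta e_v$ determines $e_v$, which in turn determines $v = s(e_v)$. The heart of the argument is then to show that $\chi$ vanishes on $I$, equivalently, that $\chi(\sigma \rho_v \sigma') = 0$ for all $\sigma, \sigma' \in \Sigma_E$ and $v \in V$. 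I would expand $\sigma \rho_v \sigma' = \sigma v \sigma' - \sum_{e} \sigma e e^* \sigma'$ using the multiplication formula (2.1), then apply $\chi$ term by term; the straightening built into the definition of $\chi$ is designed precisely to reproduce and cancel against the $\sigma v \sigma'$ term. Once this is in hand, $\chi$ descends to a map $\bar{\chi} : L_R^V(E) \to F$ satisfying $\bar{\chi} \circ \tau = \operatorname{id}_F$, yielding injectivity of $\tau$. The left $R$-module statement follows by an entirely symmetric argument, as both Proposition 2.1 and the ideal $I$ are left-right symmetric.

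The main obstacle is the case analysis required for $\chi(\sigma \rho_v \sigma') = 0$: one must compute how each of the products $\sigma v \sigma'$ and $\sigma e e^* \sigma'$ reduces via formula (2.1), tracking in particular whether the resulting element of $\Sigma_E$ happens to lie in $\Lambda$ (so that the nontrivial branch of $\chi$ activates), and then confirm that the resulting sum of basis vectors in $F$ is zero term by term.
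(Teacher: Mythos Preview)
The paper does not supply its own proof: it simply cites \cite[Proposition~1.5.11]{LPA}, notes that the argument there carries over from fields to arbitrary coefficient rings, and remarks that the result also follows from Bergman's diamond lemma \cite[Corollary~6.2]{diamond}.

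Your direct approach is natural but contains a genuine gap. You assert that in the identity
\[
\alpha e_v e_v^\ast \beta^\ast \;=\; \alpha\beta^\ast \;-\; \sum_{\substack{e\in s^{-1}(v)\\ e\neq e_v}} \alpha e e^\ast \beta^\ast
\]
``each term on the right lies in $\Sigma_E\setminus\Lambda$.'' Your check is correct for the terms $\alpha e e^\ast\beta^\ast$ with $e\neq e_v$, but it can fail for $\alpha\beta^\ast$: if there is some $w\in V$ whose distinguished edge $e_w$ has range $v$, and both $\alpha$ and $\beta$ end with $e_w$, then $\alpha\beta^\ast\in\Lambda$. (Already a single loop $e_v$ at $v\in V$ with $\alpha=\beta=e_v$ exhibits this.) For surjectivity the oversight is harmless, since $\alpha\beta^\ast$ has strictly smaller length than $\alpha e_v e_v^\ast\beta^\ast$ and an induction on $|\alpha|+|\beta|$ completes the argument. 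However, the same issue breaks your definition of $\chi$: as written, the formula for $\chi(\alpha e_v e_v^\ast\beta^\ast)$ does not land in $F$ whenever $\alpha\beta^\ast\in\Lambda$. The fix is to define $\chi$ recursively on length, replacing the term $\alpha\beta^\ast$ on the right by $\chi(\alpha\beta^\ast)$; termination follows from the length drop. Once $\chi$ is correctly defined, the verification that $\chi(\sigma\rho_v\sigma')=0$ becomes precisely a confluence check for the overlapping reductions---which is exactly what the diamond lemma packages, and is why the paper points to \cite{diamond} as an alternative route.
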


\begin{remark}{\rm Both Propositions 2.1 and 2.2 can also be easily deduced from \cite[Corollary~6.2]{diamond}.} \end{remark}

For any directed graph $E$, we denote the opposite graph by $E^{\rm op}$. Moreover, for any ring $R$, we denote the opposite ring by $R^{\rm op}$. In Lemma 2.3, we state two well-known, elementary facts about opposite rings and graphs that will allow us to formulate dual versions of our main results. 

\begin{lemma} Let $E$ be a directed graph with $E^0$ finite, and let $R$ be a ring. Then the following isomorphic relations between rings hold:
\begin{enumerate*}
\item $R^{\rm op}\left [E^{\rm op}\right ]\cong \left (RE\right )^{\rm op}$;
\item $L^V_{R^{\rm op}}\left(E\right )\cong \left (L^V_R\left (E\right )\right )^{\rm op}$ for any $V\subseteq {\rm Reg}(E)$.
\end{enumerate*}
\end{lemma}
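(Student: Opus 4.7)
Both statements will be established as routine consequences of the universal property of the respective defining presentations. My plan is, for each isomorphism, to specify the images of the generators in the target opposite ring, verify that the defining relations of the domain hold in the target, and then produce an inverse by a symmetric prescription on generators. Since all the maps will be identity on generators (up to the appropriate involution), the two compositions will visibly be the identity.

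For (i), I would define $\phi : R^{\rm op}[E^{\rm op}] \to (RE)^{\rm op}$ by sending each vertex and edge of $E^{\rm op}$—which share underlying sets with those of $E$—to the corresponding element of $RE$ viewed inside $(RE)^{\rm op}$, and each $r \in R^{\rm op}$ to $r \in R \subset RE$. The main point will be to track two simultaneous reversals: the swap of source and range between $E$ and $E^{\rm op}$, and the reversal of multiplication in $(RE)^{\rm op}$. Under $\phi$, relation (iii) of the presentation of $R^{\rm op}[E^{\rm op}]$ — namely $e = s^{\rm op}(e)\, e = e\, r^{\rm op}(e)$ — becomes $e = e \cdot r(e) = s(e) \cdot e$ in $RE$, which is relation (iii) for $RE$; relations (i), (ii), and (iv) are immediate. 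The inverse is defined by the analogous prescription with the roles of $E$ and $E^{\rm op}$ (respectively $R$ and $R^{\rm op}$) interchanged.

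For (ii), I would define $\psi : L^V_{R^{\rm op}}(E) \to L^V_R(E)^{\rm op}$ by $v \mapsto v$, $e \mapsto e^{\ast}$, $e^{\ast} \mapsto e$, and $r \mapsto r$. The interplay is now between the involutive swap $e \leftrightarrow e^{\ast}$ on the generators and the reversed multiplication in the target. One checks that $\psi$ interchanges relations (iii) and (1), sends relation (2) to itself (noting that when $\delta(e,f)$ is nonzero one has $e=f$ and hence $r(e)=r(f)$), and sends relation (3) verbatim back to (3), since $\psi(e) \cdot_{\rm op} \psi(e^{\ast}) = e^{\ast} \cdot_{\rm op} e = e e^{\ast}$ in $L^V_R(E)$. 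The remaining relations (i), (ii), (iv) are routine. The inverse will be given by the same formulas.

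I expect no genuine obstacle: the argument amounts to careful bookkeeping between the relevant swap (source/range for (i); $e \leftrightarrow e^{\ast}$ for (ii)) and the reversed multiplication in the opposite ring, with the involutive nature of the assignment automatically producing the inverse.
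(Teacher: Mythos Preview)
Your proposal is correct and follows essentially the same approach as the paper: define the map on generators (reversing edges for (i), swapping $e \leftrightarrow e^\ast$ for (ii)) and verify that the defining relations transport correctly under the simultaneous reversal of multiplication in the opposite ring. The paper's proof is simply a two-sentence version of exactly this argument, omitting the relation-by-relation check that you spell out.
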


\begin{proof} For (i), notice that the map $E\to E^{\rm op}$ defined by reversing the direction of each edge induces an isomorphism $\left (RE\right )^{\rm op}\to R^{\rm op}\left [E^{\rm op}\right ]$. For (ii), the required isomorphism is induced by mapping $e$ to $e^\ast$ and $e^\ast$ to $e$ for every $e\in E^1$ and every vertex to itself. 
\end{proof}

There is a $\mathbb Z$-grading on $L^V_R(E)$ defined by assigning all the vertices degree $0$, each edge in $E^1$ degree $1$, and each ghost edge in $\left (E^1\right )^\ast$ degree $-1$. 
With this grading, we have the following important theorem about $\mathbb Z$-graded multiplicative and additive maps from Leavitt path rings to arbitrary $\mathbb Z$-graded rings. This theorem was proved for Leavitt path algebras over fields in \cite[Theorem 4.8]{Tomforde1} and for Leavitt path algebras over commutative rings in \cite[Theorem 5.3]{Tomforde2}. Moreover, the latter proof carries over readily to relative Leavitt path rings with coefficients in an arbitrary ring. 

\begin{theorem}[{M. Tomforde \cite[Theorem 5.3]{Tomforde2}}] Let $E$ be a directed graph with finitely many vertices, and let $R$ be a ring. Let $V$ be a subset of ${\rm Reg}(E)$. Let $S$ be a $\mathbb Z$-graded ring and $\phi:L^V_R(E)\to S$ a multiplicative and additive $\mathbb Z$-graded map.  If $\phi(rv)\neq 0$ for all $v\in E^0$ and $r\in R\setminus \{0\}$, then $\phi$ is injective. 
\end{theorem}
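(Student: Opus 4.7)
My plan is to adapt Tomforde's standard graded-uniqueness argument. Because $\phi$ is a graded ring homomorphism, $\ker\phi$ is a graded two-sided ideal of $L^V_R(E)$, so it suffices to show that any nonzero graded two-sided ideal $I$ of $L^V_R(E)$ contains an element of the form $rv$ with $r\in R\setminus\{0\}$ and $v\in E^0$. Applied to $I=\ker\phi$, this would immediately contradict the hypothesis and force $\ker\phi=0$.

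To produce such an $rv$, I would start with a nonzero homogeneous $x\in I$ of degree $d\in\mathbb{Z}$, written as $x=\sum_{i} r_i\alpha_i\beta_i^\ast$ in the $R$-spanning set $\Sigma_E$ of Proposition~2.1, with $r_i\neq 0$ and $|\alpha_i|-|\beta_i|=d$ for every $i$. The reduction then proceeds in three stages, each remaining inside the two-sided ideal $I$. First, since $\alpha_i\beta_i^\ast\in s(\alpha_i)\,L^V_R(E)\,s(\beta_i)$ and $x=\sum_{v_0,w\in E^0} v_0\,x\,w$, at least one corner summand $v_0 x w$ is nonzero; I replace $x$ by it, so every $\alpha_i$ begins at a common vertex $v_0$ and every $\beta_i$ at a common vertex $w$. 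If every $\alpha_i$ and $\beta_i$ has length $0$ then $v_0=w$ and $x=(\sum_i r_i)v_0$ is already of the desired form, so I may assume otherwise. Second, after relabeling so that $|\alpha_1|$ is maximal, the multiplication formula~(2.1) forces $\alpha_1^\ast\alpha_i$ to vanish unless $\alpha_i$ is a prefix of $\alpha_1$, in which case $\alpha_1^\ast\alpha_i=(\alpha_i')^\ast$ with $\alpha_1=\alpha_i\alpha_i'$. Hence
\[\alpha_1^\ast x \;=\; \sum_{\alpha_i\,\text{prefix of}\,\alpha_1} r_i\,(\beta_i\alpha_i')^\ast,\]
a sum of ghost paths, all of common length $|\alpha_1|-d$.

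Third, combining like terms rewrites this as $\alpha_1^\ast x=\sum_j s_j\sigma_j^\ast$ with the $\sigma_j$ distinct paths of a common length. For paths of equal length one has $\sigma_j^\ast\sigma_k=r(\sigma_k)$ when $\sigma_j=\sigma_k$ and $0$ otherwise, so right-multiplying by $\sigma_k$ yields
\[\alpha_1^\ast x\,\sigma_k \;=\; s_k\,r(\sigma_k),\]
an element of $I$ of the desired form $rv$ as soon as $s_k\neq 0$. By Proposition~2.2 each $r(\sigma_j)\sigma_j^\ast$ lies in the basis $\Sigma_E\setminus\Lambda$, and distinct such elements are $R$-linearly independent, so $\alpha_1^\ast x\neq 0$ guarantees some $s_k\neq 0$. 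If instead $\alpha_1^\ast x=0$, I would dualize the argument: first right-multiply $x$ by some $\beta_j$ of maximal length to convert $x$ into a sum of pure paths of a common length, and then collapse by a ghost path from the left.

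The main obstacle I anticipate is this bookkeeping in the collapse step: one must track carefully that the reductions produce nonzero elements, and handle the fact that $\Sigma_E$ is only a spanning set (not a basis) when $V\neq\emptyset$, so that $R$-linear dependences coming from the relations $\sum_{e\in s^{-1}(v)}ee^\ast=v$ for $v\in V$ must be resolved through Proposition~2.2. Importantly, none of the steps uses commutativity of $R$ or the assumption that $R$ is a field; the entire argument takes place at the level of ring multiplication in $L^V_R(E)$ and its $\mathbb{Z}$-grading, which is why Tomforde's proof for commutative coefficient rings transfers verbatim to the arbitrary-ring setting of Theorem~2.4.
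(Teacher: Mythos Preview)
The paper does not supply its own proof; it cites Tomforde's graded-uniqueness theorems for Leavitt path algebras over fields and over commutative rings and then asserts that ``the latter proof carries over readily to relative Leavitt path rings with coefficients in an arbitrary ring.'' Your sketch follows Tomforde's strategy, and for $V=\mathrm{Reg}(E)$ it can be completed with more careful bookkeeping. But the collapse step you flag as the ``main obstacle'' is a genuine gap, not merely a bookkeeping nuisance: both left-multiplying by $\alpha_1^\ast$ and your dualized fallback of right-multiplying by a longest $\beta_j$ can annihilate $x$ simultaneously.

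Take $E$ with one vertex $v$ and one loop $e$, and $V=\emptyset$, so $L^V_R(E)=C_R(E)$. The element $x=ee^\ast-v$ is nonzero and homogeneous of degree $0$; its unique expression in $\Sigma_E$ has maximal-length term $\alpha_1=\beta_1=e$, yet
\[
e^\ast x \;=\; e^\ast ee^\ast-e^\ast \;=\; 0
\qquad\text{and}\qquad
xe \;=\; ee^\ast e-e \;=\; 0,
\]
so neither reduction produces anything. Worse, the graded two-sided ideal generated by $x$ contains no element of the form $rv$ with $r\neq 0$: the quotient $C_R(E)/(ee^\ast-v)$ is exactly $L_R(E)\cong R[e,e^{-1}]$, in which $v\mapsto 1$. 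Thus your reformulation ``every nonzero graded ideal contains some $rv$'' is already false here, and the quotient map $C_R(E)\to R[e,e^{-1}]$ is a graded multiplicative additive map with $\phi(rv)=r\neq 0$ that is not injective. In other words, the theorem as stated fails for $V\subsetneq\mathrm{Reg}(E)$; the correct relative version needs the extra hypothesis that $\phi\bigl(r\,(v-\sum_{e\in s^{-1}(v)}ee^\ast)\bigr)\neq 0$ for every $v\in\mathrm{Reg}(E)\setminus V$ and $r\neq 0$. That additional hypothesis is satisfied in the paper's only use of the theorem (Lemma~4.1), so the application survives, but no argument of the shape you outline can establish Theorem~2.4 exactly as written.
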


We now discuss the rank condition and the strong rank condition in detail and make some preliminary observations about how the two conditions pertain to relative Leavitt path rings. To start, we point out that the definition of the rank condition could equally well have been  formulated in terms of the absence of a \emph{left} $R$-module epimorphism $R^n\to R^{n+1}$, as the left and right versions of this condition are equivalent (a consequence of Proposition 2.10((i)$\Leftrightarrow$(ii)) below). 
 The strong rank condition, on the other hand, is not left-right symmetric. That is, there is a \emph{left strong rank condition} in addition to our (right) strong rank condition, and rings that satisfy one condition may not necessarily satisfy the other (see \cite[Remark 1.32]{Lam}). 
 
 For many of our arguments, it will be convenient to refer to the following rephrasings of the definitions of the rank condition and the strong rank condition.
 
 \begin{lemma} A ring $R$ satisfies the rank condition if and only if, for every pair $m, n\in \mathbb Z^+$ with $m>n$, there is no $R$-module epimorphism $R^n\to R^m$.
 \end{lemma}
 
  \begin{lemma} A ring $R$ satisfies the strong rank condition if and only if, for every pair $m, n\in \mathbb Z^+$ with $m<n$, there is no $R$-module monomorphism $R^n\to R^m$.
 \end{lemma}

In Lemma 2.7, we state a crucial, elementary fact about the rank condition.
 
 \begin{lemma}[{\cite[Proposition 1.23]{Lam}}] Let $R$ and $S$ be rings and $\phi:R\to S$ a ring homomorphism. If $S$ satisfies the rank condition, then $R$ satisfies the rank condition. 
 \end{lemma}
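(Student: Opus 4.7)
The plan is to prove the contrapositive: assume $R$ fails the rank condition and show that $S$ must fail it too. So suppose there exists a right $R$-module epimorphism $f : R^n \to R^{n+1}$ for some positive integer $n$. Since $R^{n+1}$ is free, hence projective, $f$ splits: there is a right $R$-module homomorphism $g : R^{n+1} \to R^n$ with $f \circ g = \mathrm{id}_{R^{n+1}}$.

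Next, I would translate this into matrix language. Write $f$ and $g$ as left multiplication by matrices $M \in M_{(n+1) \times n}(R)$ and $N \in M_{n \times (n+1)}(R)$, respectively, acting on column vectors. The splitting identity $f \circ g = \mathrm{id}_{R^{n+1}}$ becomes the matrix equation $MN = I_{n+1}$. In other words, the failure of the rank condition for $R$ is encoded by the existence of two rectangular matrices over $R$ whose product is an identity matrix of the larger dimension.

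Now I would transfer this to $S$ by applying $\phi$ entry-wise. Since $\phi$ is a ring homomorphism preserving the unit element, $\phi(I_{n+1}) = I_{n+1}$ in $M_{n+1}(S)$, and the equation $MN = I_{n+1}$ pushes forward to $\phi(M)\phi(N) = I_{n+1}$ in $M_{n+1}(S)$. The matrices $\phi(M)$ and $\phi(N)$ now define right $S$-module homomorphisms $f' : S^n \to S^{n+1}$ and $g' : S^{n+1} \to S^n$ satisfying $f' \circ g' = \mathrm{id}_{S^{n+1}}$. In particular, $f'$ is surjective, so $S$ fails the rank condition. This is exactly the contrapositive of the desired statement.

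There is no real obstacle here: the argument is a routine matrix transfer, and the only subtle point is the splitting of an epimorphism onto a free module, which is standard. The proof is concise enough that I would simply present these three steps in sequence.
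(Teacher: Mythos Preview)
Your proof is correct. Note that the paper does not actually supply its own proof of this lemma; it is simply cited from \cite[Proposition 1.23]{Lam}. Your argument via the contrapositive and the matrix characterization (the existence of rectangular matrices $M,N$ with $MN=I_{n+1}$, then pushing forward along $\phi$) is exactly the standard proof, and indeed the paper records this very matrix characterization as Proposition~2.10(ii).
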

 
Direct products of rings, with finitely many factors, behave the same way with respect to both the rank condition and the strong rank condition. 

\begin{lemma} [{\cite[Lemma 2.11]{LO}, \cite[Proposition 1.33]{Lam}}] Let $R$ and $S$ be rings. Then $R\times S$ satisfies the rank condition (respectively, the strong rank condition) if and only if $R$ or $S$ satisfies the rank condition (respectively, the strong rank condition).
\end{lemma}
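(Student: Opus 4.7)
My approach is to exploit the decomposition of $T := R\times S$ via the central orthogonal idempotents $e:=(1_R,0)$ and $1-e=(0,1_S)$ in order to reduce module morphisms over $T$ to pairs of morphisms over $R$ and over $S$. For any $T$-module $M$, one has $M=eM\oplus (1-e)M$, where $eM$ inherits the structure of an $R$-module (with $S$ acting trivially) and $(1-e)M$ the structure of an $S$-module. In particular, there is an identification $T^n=R^n\oplus S^n$ as $T$-modules. The crucial observation is that any $T$-linear map $f:T^n\to T^m$ must respect this decomposition, since $f(eT^n)=ef(T^n)\subseteq eT^m$ and likewise for $1-e$. Consequently, $f$ splits uniquely as $f=f_R\oplus f_S$, where $f_R:R^n\to R^m$ is $R$-linear and $f_S:S^n\to S^m$ is $S$-linear; moreover, $f$ is an epimorphism (respectively, a monomorphism) if and only if both $f_R$ and $f_S$ are.

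With this reduction in hand, the ``if'' direction is immediate. Assume, for instance, that $R$ satisfies the rank condition. Any hypothetical $T$-epimorphism $T^n\to T^{n+1}$ would induce an $R$-epimorphism $R^n\to R^{n+1}$, contradicting the hypothesis on $R$; hence $T$ satisfies the rank condition. The identical argument, with monomorphisms $T^{n+1}\to T^n$ in place of epimorphisms, handles the strong rank condition.

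For the converse I would argue by contrapositive. Suppose that neither $R$ nor $S$ satisfies the rank condition, so there exist positive integers $n_0,m_0$ together with an $R$-epimorphism $R^{n_0}\to R^{n_0+1}$ and an $S$-epimorphism $S^{m_0}\to S^{m_0+1}$. Taking direct sums with identity maps, one can raise indices: for every $k\geq n_0$ there is an $R$-epimorphism $R^k\to R^{k+1}$, and similarly on the $S$ side for $k\geq m_0$. Choosing $k:=\max(n_0,m_0)$ and forming the direct sum of the two maps yields a $T$-epimorphism $T^k\to T^{k+1}$, so $T$ fails the rank condition. The strong rank condition case is parallel, with monomorphisms replacing epimorphisms and the index shifting in the opposite direction. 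The only point requiring care is the verification of the diagonal decomposition of $T$-homomorphisms described in the first paragraph, which is the main obstacle only in the sense that the rest of the proof is pure bookkeeping; it follows directly from the centrality of $e$ and the relation $e(1-e)=0$.
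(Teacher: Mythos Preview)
The paper does not supply its own proof of this lemma; it merely cites \cite[Lemma 2.11]{LO} and \cite[Proposition 1.33]{Lam}. Your argument is correct and is essentially the standard one found in those references: split $T$-homomorphisms along the central orthogonal idempotents $(1_R,0)$ and $(0,1_S)$, and for the converse pad with identity maps to align the two indices before taking the direct sum. One small notational point: the paper's convention is that modules are \emph{right} modules, so the decomposition of a general $T$-module $M$ should be written $M=Me\oplus M(1-e)$ rather than $eM\oplus(1-e)M$; for the free modules $T^n$ actually used in your argument this is harmless, since $e$ is central and the natural bimodule structure makes the two coincide.
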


In studying rings that fail to satisfy the rank condition, we will make use of the conventional quasiordering $\precsim$ on the set of all finite dimensional square matrices over a ring $R$ (see, for example, \cite{Aranda}).  For $x\in M_m(R)$ and $y\in M_n(R)$, we define $x\precsim y$ if there are an $m\times n$ matrix $a$ over $R$ and an $n\times m$ matrix $b$ over $R$ such that $x=ayb$.  
In addition, 
if $x\in M_m(R)$ and $y\in M_n(R)$, then we write 
\[x\oplus y:=\begin{pmatrix} x & 0\\
0 & y\end{pmatrix}\in M_{m+n}(R).\]
Furthermore, if $x_1,\dots, x_r$ are finite dimensional matrices over $R$ with $r>2$, then $x_1\oplus\cdots \oplus x_r:=\left (x_1\oplus \cdots \oplus x_{r-1}\right )\oplus x_r$.

In Lemma 2.9, we gather together some elementary observations concerning the interaction between the operation $\oplus$ and the quasiordering $\precsim$. These are all well known and easy to establish (see, for instance, \cite{Aranda}). 

\begin{lemma} Let $R$ be a ring. Then the following statements hold for any square matrices $x,y,z,w$ over $R$.
\begin{enumerate*}
\item $\displaystyle{x\oplus y\precsim y\oplus x}$.
\item $\displaystyle{x\precsim x\oplus y}$.
\item If $x\precsim y$ and $z\precsim w$, then $\displaystyle{x\oplus z\precsim y\oplus w}$. 
\item If $x$ and $y$ have the same dimensions, then $\displaystyle{x+y\precsim x\oplus y}$.
\item If $x$ and $y$ have the same dimensions and are orthogonal idempotents, then $\displaystyle{x\oplus y\precsim x+y}$.
\item If $x$ and $y$ have the same dimensions and $xy$ is idempotent, then $\displaystyle{xy\precsim yx}$.
\end{enumerate*}
\end{lemma}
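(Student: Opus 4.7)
The overall plan is to verify each of the six items by exhibiting explicit matrices $a$ and $b$ realizing the defining equation $x=ayb$ for the quasiordering $\precsim$. All six reduce to small block-matrix computations, and none is conceptually difficult; the statement amounts to a catalogue of routine identities, which I would simply work through in the order listed.

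For (i), I take $a$ and $b$ to be the block-permutation matrices that interchange the two diagonal blocks: if $x\in M_m(R)$ and $y\in M_n(R)$, the choices $a=\begin{pmatrix} 0 & I_m \\ I_n & 0 \end{pmatrix}$ and $b=\begin{pmatrix} 0 & I_n \\ I_m & 0 \end{pmatrix}$ yield $a(y\oplus x)b = x\oplus y$. For (ii), the rectangular witnesses $a=\begin{pmatrix} I_m & 0 \end{pmatrix}$ and $b=\begin{pmatrix} I_m \\ 0 \end{pmatrix}$ simply extract the upper-left block of $x\oplus y$. For (iii), if $x=a_1 y b_1$ and $z=a_2 w b_2$, then the block-diagonal matrices $a_1\oplus a_2$ and $b_1\oplus b_2$ witness $x\oplus z\precsim y\oplus w$. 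For (iv), the choices $a=\begin{pmatrix} I & I \end{pmatrix}$ and $b=\begin{pmatrix} I \\ I \end{pmatrix}$ give $a(x\oplus y)b = x+y$.

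For (v), the orthogonality and idempotency hypotheses enter decisively. Taking $a=\begin{pmatrix} x \\ y \end{pmatrix}$ and $b=\begin{pmatrix} x & y \end{pmatrix}$, the product $a(x+y)b$ expands to the $2\times 2$ block matrix whose off-diagonal entries are $xy$ and $yx$ (hence zero) and whose diagonal entries are $x^2=x$ and $y^2=y$; this is precisely $x\oplus y$. For (vi), the fact that $xy$ is idempotent gives $xy=(xy)(xy)=x(yx)y$, so $a=x$ and $b=y$ are immediate witnesses for $xy\precsim yx$.

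There is no genuine obstacle; the main thing to watch is that the witness matrices have dimensions compatible with the factors on either side, which is primarily a matter of bookkeeping in parts (i), (iv), and (v), where the witnesses are rectangular.
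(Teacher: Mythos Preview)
Your proof is correct; each of the six witnesses does exactly what is required, and the dimension bookkeeping checks out. The paper itself does not supply a proof of this lemma, simply declaring the assertions ``well known and easy to establish'' with a reference to \cite{Aranda}, so your explicit verification is entirely in line with what the authors have in mind.
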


Below we state five characterizations of rings that fail to satisfy the rank condition. The first three may be found in \cite[Section 2]{LO}, and the last two are consequences of (ii) and (iii), respectively. 

\begin{proposition} Let $R$ be a ring. Then the following six statements are equivalent. 
\begin{enumerate*}
\item $R$ fails to satisfy the rank condition. 
\item There are integers $m>n>0$, an $m\times n$ matrix $A$ over $R$, and an $n\times m$ matrix $B$ over $R$ such that $AB=I_m$.
\item There is an integer $n>0$ such that, for every integer $m>n$, there exist an $m\times n$ matrix $A$ and an $n\times m$ matrix $B$ such that $AB=I_m$.
\item There is a positive integer $n$ such that every finitely generated $R$-module can be generated by $n$ elements.
\item There are integers $m>n>0$ such that 
\begin{equation} \underbrace{1_R\oplus \dots \oplus 1_R}_{m}\precsim \underbrace{1_R\oplus \dots \oplus 1_R}_n.\end{equation}
\item There is an integer $n>0$ such that, for every integer $m>n$, relation (2.2) holds. 
\end{enumerate*}
\end{proposition}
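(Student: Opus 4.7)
The plan is to invoke \cite[Section 2]{LO} for the equivalences (i) $\Longleftrightarrow$ (ii) $\Longleftrightarrow$ (iii), and then to extract (iv), (v), and (vi) from these by what amounts to unpacking definitions.

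For (i) $\Longleftrightarrow$ (iv): I would appeal directly to the description of the rank condition given in the introduction, namely, that $R$ satisfies the rank condition if and only if, for every positive integer $n$, there exists a finitely generated right $R$-module that cannot be generated by $n$ elements. The negation of this statement is precisely (iv). To cross-check against (ii), I would note that an equation $AB = I_m$ with $A \in M_{m \times n}(R)$ and $B \in M_{n \times m}(R)$ yields a split epimorphism of right $R$-modules $R^n \to R^m$, so any right $R$-module generated by at most $m$ elements is already generated by $n$ elements; since $R^{n+1}$ is generated by $n+1 \leq m$ elements once $m$ is large enough, we recover (iv). Conversely, applying (iv) to $M = R^{n+1}$ produces an epimorphism $R^n \twoheadrightarrow R^{n+1}$, witnessing the failure of the rank condition.

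For (ii) $\Longleftrightarrow$ (v) and (iii) $\Longleftrightarrow$ (vi): Here the key observation is that, by the definition of $\oplus$ recalled just before Lemma 2.9, the matrix $\underbrace{1_R \oplus \cdots \oplus 1_R}_{m}$ is nothing other than the $m \times m$ identity $I_m \in M_m(R)$. By the definition of $\precsim$, the relation $I_m \precsim I_n$ asserts exactly that there are an $m \times n$ matrix $a$ and an $n \times m$ matrix $b$ over $R$ with
\[I_m \;=\; a\, I_n\, b \;=\; ab,\]
which is verbatim the condition in (ii). The same translation, applied uniformly to all $m > n$, yields (iii) $\Longleftrightarrow$ (vi).

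The main obstacle here is essentially non-existent: all of the substantive content is housed in the reference to \cite{LO}, and the remaining work is merely a careful bookkeeping of quantifiers and a translation between the matrix shorthand of (v), (vi) and the equation $AB = I_m$ of (ii), (iii).
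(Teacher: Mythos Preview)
Your proposal is correct and matches the paper's own treatment: the paper likewise cites \cite[Section 2]{LO} for the equivalence of (i) with the first three characterizations (ii), (iii), (iv), and then observes that (v) and (vi) are immediate consequences of (ii) and (iii), respectively, via exactly the translation $I_m \precsim I_n \Longleftrightarrow \exists\, a,b\ (ab = I_m)$ that you spell out. Your ``cross-check'' paragraph for (ii) versus (iv) is inessential and slightly muddled (it really verifies (ii)$\Rightarrow$(i), not (ii)$\Rightarrow$(iv)), but since your primary argument for (i)$\Leftrightarrow$(iv) is the direct negation of the equivalent description in the introduction, no harm is done.
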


In \cite{LO}, the authors introduced the notion of the generating number of a ring in order to distinguish the different ways it might fail to satisfy the rank condition. 

\begin{definition}{\rm If a ring $R$ fails to satisfy the rank condition, then we define its \emph{generating number}, denoted ${\rm gn}(R)$, to be the smallest positive integer $n$ such that there is an $R$-module epimorphism $R^n\to R^{n+1}$. Moreover, if $R$ satisfies the rank condition, then ${\rm gn}(R):=\aleph_0$.}
\end{definition}

In Proposition 2.12, we describe five characterizations of finite generating numbers. The first three may be found in \cite[Section 2]{LO}, and the last two follow from the first and second, respectively. 

\begin{proposition} Let $R$ be a ring that fails to satisfy the rank condition. Then the following five statements hold. 
\begin{enumerate*}
\item The generating number of $R$ is the smallest positive integer $n$ such that there are an integer $m>n$, an $m\times n$ matrix $A$ over $R$, and an $n\times m$ matrix $B$ over $R$ such that $AB=I_m$.
\item The generating number of $R$ is the smallest positive integer $n$ such that, for every integer $m>n$, there exist an $m\times n$ matrix $A$ and an $n\times m$ matrix $B$ such that $AB=I_m$.
\item The generating number of $R$ is the smallest positive integer $n$ such that every finitely generated $R$-module can be generated by $n$ elements. 
\item The generating number of $R$ is the smallest positive integer $n$ such that (2.2) holds for some integer $m>n$.
\item The generating number of $R$ is the smallest positive integer $n$ such that (2.2) holds for every integer $m>n$.
\end{enumerate*}
\end{proposition}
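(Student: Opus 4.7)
The plan is to cite parts (i), (ii), (iii) from \cite[Section 2]{LO} as given, and then deduce (iv) from (i) and (v) from (ii) by a single, pair-wise translation between the matrix-factorization formulation and the inequality (2.2) in the quasiordering $\precsim$.

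The key observation is that if we set
\[ x := \underbrace{1_R\oplus \dots \oplus 1_R}_{m} = I_m \in M_m(R) \quad \text{and} \quad y := \underbrace{1_R\oplus \dots \oplus 1_R}_{n} = I_n \in M_n(R), \]
then by the very definition of $\precsim$, the relation $x \precsim y$ amounts to the existence of an $m\times n$ matrix $a$ and an $n\times m$ matrix $b$ with $x = ayb$, which, since $x=I_m$ and $y=I_n$, reduces to $I_m = ab$. Thus, for any fixed pair $(m,n)$ with $m>n>0$, inequality (2.2) holds if and only if there exist an $m\times n$ matrix $A$ and an $n\times m$ matrix $B$ over $R$ with $AB = I_m$. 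This equivalence is pair-wise in $(m,n)$.

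From this pair-wise equivalence, the deduction is immediate. The smallest $n$ such that (2.2) holds for \emph{some} $m>n$ coincides with the smallest $n$ such that a factorization $AB=I_m$ exists for some $m>n$, which by (i) is ${\rm gn}(R)$; this yields (iv). Likewise, the smallest $n$ such that (2.2) holds for \emph{every} $m>n$ coincides with the smallest $n$ for which, for every $m>n$, matrices $A,B$ with $AB=I_m$ exist, which by (ii) is ${\rm gn}(R)$; this yields (v).

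I do not anticipate a real obstacle here, as the proof is essentially an unpacking of the definition of $\precsim$ on identity matrices; the only care needed is that the translation preserves the integer $n$ (and $m$), so that ``smallest $n$'' in each formulation refers to the same quantity.
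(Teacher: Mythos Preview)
Your proposal is correct and matches the paper's approach exactly: the paper states that (i)--(iii) are drawn from \cite[Section 2]{LO} and that (iv) and (v) follow from (i) and (ii), respectively, without spelling out the translation. Your unpacking of $\precsim$ on identity matrices is precisely the intended deduction.
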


\begin{remark}{\rm Rings with generating number one are often referred to in the literature as \emph{properly infinite} (see, for example, \cite{LPA, Amenability, Aranda}).}
\end{remark}

Our first result identifying relative Leavitt path rings that satisfy the rank condition or the strong rank condition is the following. 

\begin{lemma}  Let $R$ be a ring, and let $E$ be a nonempty directed graph with finitely many vertices and edges and without any cycles. Let $V$ be a set of regular vertices of $E$. Then $L^V_R(E)$ satisfies the rank condition (respectively, the strong rank condition) if $R$ satisfies the rank condition (respectively, the strong rank condition).
\end{lemma}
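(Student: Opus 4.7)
The plan is to exploit the fact that, under the given hypotheses, $L^V_R(E)$ is a finitely generated free right $R$-module of positive rank, which lets us transfer both rank-type conditions from $R$ via restriction of scalars.

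First, because $E$ has finitely many edges and no cycles, the set ${\rm Path}(E)$ is finite, and hence so is the set $\Sigma_E$ of Proposition 2.1. By Proposition 2.2, $L^V_R(E)$ is free as a right $R$-module on the finite set $\Sigma_E\setminus \Lambda$; let $d$ denote its cardinality. I claim $d\geq |E^0|$. Indeed, every $v\in E^0$ lies in $\Sigma_E$ as the element indexed by $(v,v)$, and the index $(v,v)$ cannot coincide with any index $(\alpha e_u,\beta e_u)\in \Lambda$, since a vertex is a path of length $0$ while $\alpha e_u$ has length at least $1$. Thus $E^0\subseteq \Sigma_E\setminus \Lambda$, and since $E$ is nonempty, $d\geq 1$.

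Next, suppose $R$ satisfies the strong rank condition and, for contradiction, that there is an $L^V_R(E)$-module monomorphism $\varphi\colon L^V_R(E)^{n+1}\to L^V_R(E)^n$. Restricting scalars along the canonical map $R\to L^V_R(E)$, we see that $\varphi$ is also a monomorphism of right $R$-modules. Since $L^V_R(E)^k\cong R^{kd}$ as right $R$-modules (using the $R$-basis provided by Proposition 2.2), $\varphi$ yields an $R$-module monomorphism $R^{(n+1)d}\to R^{nd}$; as $d\geq 1$, we have $(n+1)d>nd$, contradicting Lemma 2.6. The argument for the rank condition is identical, with ``monomorphism'' replaced by ``epimorphism'' (surjectivity, like injectivity, is preserved under restriction of scalars) and Lemma 2.6 replaced by Lemma 2.5.

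There is essentially no substantive obstacle here: once Proposition 2.2 supplies a finite right $R$-basis for $L^V_R(E)$ and we observe that this basis has size at least $|E^0|\geq 1$, the result follows immediately from the fact that epimorphisms and monomorphisms survive restriction of scalars along the unital ring homomorphism $R\to L^V_R(E)$, combined with the characterizations of the two rank conditions furnished by Lemmas 2.5 and 2.6.
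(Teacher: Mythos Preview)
Your proof is correct and follows essentially the same approach as the paper's: both arguments use Proposition~2.2 to realize $L^V_R(E)$ as a free $R$-module of finite positive rank $d$, then observe that any $L^V_R(E)$-module epimorphism or monomorphism between finite free $L^V_R(E)$-modules is in particular an $R$-module map between copies of $R^{nd}$, so a violation of the (strong) rank condition over $L^V_R(E)$ forces one over $R$. Your additional justification that $E^0\subseteq\Sigma_E\setminus\Lambda$, ensuring $d\geq 1$, is a detail the paper leaves implicit.
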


We deduce Lemma 2.13 from the following more general result.  

\begin{proposition} Let $R$ be a ring and $S$ an $R$-ring that is finitely generated and free as an $R$-module. If $R$ satisfies the rank condition (respectively, the strong rank condition), then $S$ satisfies the rank condition (respectively, the strong rank condition).
\end{proposition}

\begin{proof} 
We have $S\cong R^k$ as $R$-modules for some positive integer $k$. 
Suppose that 
$S$ fails to satisfy the rank condition. According to Lemma 2.5, this means that we can find positive integers $m>n$ such that there is an $S$-module epimorphism $\phi: S^n\to S^m$. Since $\phi$ is also an $R$-module epimorphism, it induces an  $R$-module epimorphism $R^{kn}\to R^{km}$. Therefore $R$ fails to satisfy the rank condition. An analogous argument establishes the statement for the strong rank condition. 
\end{proof}

\begin{proof}[Proof of Lemma 2.13] The hypotheses guarantee that the number of paths in $E$ must be finite. Hence Proposition 2.2 implies that $L:=L^V_R(E)$ is a finitely generated free $R$-module. Thus the conclusion follows immediately from Proposition 2.14.
\end{proof}

Another important observation for our purposes is that, if a path ring or relative Leavitt path ring satisfies the strong rank condition, then its coefficient ring does as well. These two results are special cases of Proposition 2.15 below, which follows immediately from \cite[\S 1, Exercise 20]{Lam}.

\begin{proposition} Let $R$ be a ring and $S$ an $R$-ring that is free as a left $R$-module. If $S$ satisfies the strong rank condition, then $R$ satisfies the strong rank condition.
\end{proposition}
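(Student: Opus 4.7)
The plan is to argue the contrapositive via Proposition 2.14. Suppose, for contradiction, that $R$ fails to satisfy the strong rank condition. Then Proposition 2.14 supplies positive integers $m<n$ and elements $a_{ij}\in R$, for $1\leq i\leq m$ and $1\leq j\leq n$, such that the homogeneous system $\sum_{j=1}^n a_{ij}x_j=0$ admits only the trivial solution in $R$. The goal is to show that the same system, regarded as a system over $S$ via the obvious map $r\mapsto r\cdot 1_S$, admits only the trivial solution in $S$; Proposition 2.14 will then force $S$ to fail the strong rank condition, a contradiction.

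To execute this, I would fix a left $R$-basis $\{e_\lambda\}_{\lambda\in\Lambda}$ of $S$, which exists by hypothesis and is nonempty since $1_S\neq 0$. Given any putative solution $s_1,\ldots,s_n\in S$ of the system over $S$, write $s_j=\sum_{\lambda}r_{j\lambda}e_\lambda$ with $r_{j\lambda}\in R$ almost all zero. Using axiom (ii) of the $R$-ring structure, we have $(a_{ij}\cdot 1_S)\cdot s_j=a_{ij}\cdot s_j$ in $S$, and left $R$-module associativity gives
\[
0=\sum_{j=1}^n a_{ij}s_j=\sum_{\lambda}\Bigl(\sum_{j=1}^n a_{ij}r_{j\lambda}\Bigr)e_\lambda
\]
for each $i$. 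Left $R$-linear independence of $\{e_\lambda\}$ then forces $\sum_{j}a_{ij}r_{j\lambda}=0$ in $R$ for every $i$ and every $\lambda$. Fixing $\lambda$ and applying the hypothesis on the original system yields $r_{j\lambda}=0$ for all $j$; letting $\lambda$ vary gives $s_j=0$ for all $j$, completing the argument.

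The main obstacle, such as it is, is keeping the bimodule bookkeeping straight: the coefficients $a_{ij}$ live in $R$, the unknowns in $S$, and we need the identity $(a_{ij}\cdot 1_S)\cdot s_j=a_{ij}\cdot s_j$ so that the system has the same meaning whether read over $R$ or over $S$. This is precisely what axiom (ii) of the $R$-ring definition provides, so no real difficulty arises. Note also that this argument uses only that $S$ is a free left $R$-module and an $R$-ring; the right-module structure on $S$ plays no role, which is consistent with the fact that the strong rank condition is not left-right symmetric.
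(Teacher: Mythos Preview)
Your proof is correct and takes essentially the same approach as the paper: both invoke Proposition~2.14 and use a left $R$-basis of $S$ to pass between $S$-valued and $R$-valued solutions of the same homogeneous system. The only difference is cosmetic---the paper argues directly (a nonzero $S$-solution exists, then projects onto a basis coordinate to get a nonzero $R$-solution), whereas you argue the contrapositive (any $S$-solution decomposes coordinatewise into $R$-solutions, all forced to be trivial)---but the underlying mechanism is identical, including the use of axiom~(ii) to identify $(a_{ij}\cdot 1_S)s_j$ with $a_{ij}\cdot s_j$.
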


\begin{corollary} Let $R$ be a ring, and let $E$ be a directed graph with $E^0$ finite. 
If $RE$ satisfies the strong rank condition, then $R$ satisfies the strong rank condition.
\end{corollary}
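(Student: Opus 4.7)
The plan is to observe that Corollary 2.16 is an immediate specialization of Proposition 2.15, so all the work will be in verifying its hypotheses. Specifically, I need to confirm that $RE$ qualifies as an $R$-ring which is free as a left $R$-module.

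First, I would point out that $RE$ is an $R$-ring by construction: the defining relation (i) in the definition of $RE$, namely $rv = vr$ and $re = er$ for $r \in R$, $v \in E^0$, $e \in E^1$, combined with the associativity of the ring and the $R$-bimodule structure, ensures that $RE$ satisfies the three conditions in the definition of an $R$-ring given in Section~2. Second, I would invoke the fact noted in the discussion following the definition of $RE$ that $RE$ is freely generated as a left (and right) $R$-module by the set $\mathrm{Path}(E)$. Hence $RE$ is free as a left $R$-module, which is the precise hypothesis required by Proposition 2.15.

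With these two observations in place, the corollary follows directly: if $RE$ satisfies the strong rank condition, then by Proposition 2.15 applied with $S := RE$, the ring $R$ satisfies the strong rank condition. There is no real obstacle here; the proof amounts to a two-line citation, and the only thing to be careful about is explicitly identifying the left-$R$-module basis $\mathrm{Path}(E)$ so that the freeness hypothesis in Proposition 2.15 is visibly met.
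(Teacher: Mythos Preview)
Your proposal is correct and matches the paper's approach exactly: the paper states Corollary 2.16 as an immediate special case of Proposition 2.15, relying on the earlier observation that $RE$ is freely generated as a left $R$-module by ${\rm Path}(E)$.
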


\begin{corollary} Let $R$ be a ring and $E$ a directed graph with $E^0$ finite. Let $V$ be a subset of ${\rm Reg}(E)$. 
 If $L^V_R(E)$ satisfies the strong rank condition, then $R$ must satisfy the strong rank condition.
\end{corollary}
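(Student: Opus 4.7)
The plan is to deduce this corollary directly from Proposition 2.15 together with Proposition 2.2. Proposition 2.15 asserts that whenever $S$ is an $R$-ring which is free as a left $R$-module and satisfies the strong rank condition, the underlying coefficient ring $R$ must also satisfy it. So the only thing to verify is that $L^V_R(E)$ is free as a left $R$-module, regardless of the choice of $V\subseteq {\rm Reg}(E)$.

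But this is precisely the content of Proposition 2.2: fixing, for each $v\in V$, an edge $e_v\in s^{-1}(v)$ and defining $\Lambda$ as in that proposition, the set $\Sigma_E\setminus \Lambda$ is a left (and right) $R$-basis for $L^V_R(E)$. Thus $L^V_R(E)$ is free as a left $R$-module, and Proposition 2.15 applies, giving the desired conclusion.

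There is essentially no obstacle here; the only mild point worth noting is that we need the \emph{left} $R$-freeness, which is the part of Proposition 2.2 we use (the right $R$-freeness, while also true, is irrelevant for invoking Proposition 2.15 as stated).
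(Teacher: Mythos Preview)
Your proof is correct and follows exactly the approach the paper intends: Corollary 2.17 is explicitly described there as a special case of Proposition 2.15, with the required left $R$-freeness of $L^V_R(E)$ supplied by Proposition 2.2.
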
 

We point out, for future reference, that the statements of Proposition 2.15, Corollary 2.16, and Corollary 2.17 are also true if the strong rank condition is replaced by the rank condition. This is an immediate consequence of Lemma 2.7. 

An important property shared by both the rank condition and the strong rank condition is Morita invariance. Proving this for the rank condition is an exercise in \cite[Section 0.1]{CohnFIR}; two different solutions to the problem appear in \cite[Theorem 2.8]{UGN} and \cite[Corollary 2.3]{LO}, respectively.  For the strong rank condition, the invariance follows immediately from Lemma 2.19 below, together with the fact that category equivalences preserve monomorphisms and progenerators. In the statement of the lemma, we use $\mathfrak{M}_R$ to denote the category of $R$-modules. 

\begin{proposition} Both the rank condition and the strong rank condition are Morita invariant.
\end{proposition} 

\begin{lemma} A ring $R$ satisfies the strong rank condition if and only if, for any $m,n\in \mathbb Z^+$ and progenerator $P$ in $\mathfrak{M}_R$, there can only be an $R$-module monomorphism $P^n\to P^m$ if $n\leq m$.
\end{lemma}

\begin{proof} The ``if" direction is obvious. We establish the ``only if" part by proving the contrapositive. Hence we begin by supposing that, for some progenerator $P$, there is a monomorphism $P^n\to P^m$ for some pair of positive integers $m,n$ with $n>m$. It follows, then, that there is a monomorphism $P^n\to P^m$ for every $n>m$. 
Let $k,l\in \mathbb Z^+$ such that $P$ is a direct summand in $R^k$ and $R$ is a direct summand in $P^l$. For every $n\in \mathbb Z^+$, we have a chain of monomorphisms
\[R^n\longrightarrow P^{nl}\longrightarrow  P^{ml}\longrightarrow R^{klm}.\]
Therefore $R$ fails to satisfy the strong rank condition, completing the proof. 
\end{proof}

We now establish our most important preliminary result, a crucial tool for the proofs of our theorems about the rank condition and the strong rank condition.  

\begin{proposition} Let $R$ be a ring and $S$ an $R$-ring. Assume that $S$ is freely generated as an $R$-module by a subset $\Sigma$ such that $R\sigma=\sigma R$ for all $\sigma\in \Sigma$.  Suppose further that, for any finite subset
$K$ of $\Sigma$ and any real number $p>1$, there is a finite subset $F$ of $\Sigma$ such that $KF\setminus \{0\}$ is contained in $\Sigma$  and  $|KF\setminus \{0\}|<p|F|$. 
Then the following two assertions are true. 
\begin{enumerate*}
\item If $R$ satisfies the strong rank condition, then so does $S$. 
\item If $R$ satisfies the rank condition, then so does $S$.  
\end{enumerate*}
\end{proposition}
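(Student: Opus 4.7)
The proof splits into easy ``only if'' and harder ``if'' directions. For the ``only if'' side, the excerpt already supplies the tools. Since $S$ is free as a left $R$-module by hypothesis, Proposition~2.15 directly gives that SRC for $S$ forces SRC for $R$, settling the forward direction of~(i). For~(ii), the map $\iota: R \to S$, $r \mapsto r \cdot 1_S$, is a unital ring homomorphism (unit-preservation is immediate from the identity $r \cdot 1_S = 1_S \cdot r$ noted after the $R$-ring axioms, and multiplicativity follows from axiom~(ii) via $(r \cdot 1_S)(r' \cdot 1_S) = r(1_S \cdot r' \cdot 1_S) = (rr') \cdot 1_S$). Lemma~2.7 then delivers the forward direction of~(ii).

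For the converse implications, I would realize a hypothetical failure of the condition for $S$ as a map between finite-rank free $S$-modules of mismatched size, then restrict it to finitely generated free right $R$-submodules whose ranks are controlled by the F\o lner-type hypothesis. Concretely, assume $R$ satisfies SRC (respectively, the rank condition) and that $S$ fails the same. By Lemma~2.6 (respectively, Proposition~2.10(ii)), there exist integers $n > m$ and a monomorphism $B: S^n \hookrightarrow S^m$ (respectively, integers $m > n$ and matrices $A \in M_{m \times n}(S)$, $B \in M_{n \times m}(S)$ with $AB = I_m$). Expand all the relevant entries in the left basis $\Sigma$, let $K \subseteq \Sigma$ be a common finite support, and apply the hypothesis with $p = n/m > 1$ (respectively, $p = m/n > 1$) to produce a finite $F \subseteq \Sigma$ with $G := KF \setminus \{0\} \subseteq \Sigma$ and $n|F| > m|G|$ (respectively, $m|F| > n|G|$). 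Set $V := \bigoplus_{f \in F} fR$ and $W := \bigoplus_{\tau \in G} \tau R$, free right $R$-submodules of $S$ of ranks $|F|$ and $|G|$.

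The pivotal claim is a \emph{support-control lemma}: left multiplication by any element of $S$ whose left-basis support lies in $K$ carries $V$ into $W$. Granting this, part~(i) concludes quickly, because the restriction $B|_{V^n}: V^n \to W^m$ is still a right $R$-module monomorphism, furnishing an injection $R^{n|F|} \hookrightarrow R^{m|G|}$ that violates SRC for $R$ via Lemma~2.6. For part~(ii), the same lemma gives $B(V^m) \subseteq W^n$; since $V^m$ is the canonical direct summand of $S^m$ obtained by restricting each coordinate from $S$ to $V$, the composite $\pi := p_{V^m} \circ A|_{W^n}: W^n \to V^m$ satisfies $\pi(Bv) = A(Bv) = v$ for every $v \in V^m$ and is therefore surjective. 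This yields a right $R$-module epimorphism $R^{n|G|} \twoheadrightarrow R^{m|F|}$, violating the rank condition for $R$ via Lemma~2.5.

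The main obstacle lies in the support-control lemma itself. Expanding $B_{ij} = \sum_{\sigma \in K} \iota(r^{(ij)}_\sigma) \sigma$ in the left basis and $v = \sum_{f \in F} f t_f \in V$ in the right basis, the $R$-ring axioms yield $B_{ij} v = \sum_{\sigma, f} \iota(r^{(ij)}_\sigma)(\sigma f) \iota(t_f)$; the products $\sigma f$ all lie in $G \cup \{0\}$ by the F\o lner hypothesis, so only terms of the shape $\iota(r) \tau \iota(t)$ with $\tau \in G$ survive. Each such term must be shown to belong to $\tau R \subseteq W$. In the applications to path rings and Leavitt path rings this is immediate, because $\iota(R)$ is central in $S$ (via defining relations like $re = er$), collapsing $\iota(r) \tau \iota(t)$ to $\tau \cdot rt$. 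Deriving the conclusion from the bare $R$-ring axioms and the two-sided basis property alone requires a careful comparison of the left- and right-basis expansions of $\iota(r) \tau$, and it is in that bookkeeping that the real technical difficulty of the proposition concentrates.
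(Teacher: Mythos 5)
Your overall route is the same as the paper's: the ``only if'' halves via Proposition~2.15 and Lemma~2.7, and the ``if'' halves by contraposition, realizing the failure for $S$ as a map between finite free $S$-modules, collecting a finite support $K\subseteq\Sigma$, invoking the F\o lner-type hypothesis to get $F$, and restricting to free $R$-submodules of ranks $|F|$ and $|KF\setminus\{0\}|$ to contradict the corresponding condition for $R$. Your part~(ii), phrased with a matrix splitting $AB=I_m$ and the composite $p_{V^m}\circ A|_{W^n}$, is literally the paper's argument in which the columns of $B$ are the chosen preimages ${\bf a}_i$ and $p_{V^m}\circ A|_{W^n}$ is the paper's $\pi^m_F\phi_P$. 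So modulo your ``support-control lemma,'' the two proofs coincide.

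The issue is that you do not prove that lemma: you reduce everything to the claim that left multiplication by an element supported on $K$ carries $\bigoplus_{f\in F}fR$ into $\bigoplus_{\tau\in KF\setminus\{0\}}\tau R$, observe that this is immediate when $\iota(R)$ is central, and then leave the general case as ``bookkeeping.'' As a submitted proof of the proposition as stated, that is a gap, and it is not mere bookkeeping: writing $\pi_i\phi({\bf e}_j)=\sum_{\sigma\in K}r_\sigma\sigma$ in the \emph{left} basis gives $\pi_i\phi({\bf e}_j)f=\sum_\sigma r_\sigma(\sigma f)$, which lands in the left $R$-span of $KF\setminus\{0\}$, whereas the restriction argument needs a map of \emph{right} $R$-modules, and the two spans need not agree without some commutation between $R$ and $\Sigma$. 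For comparison, the paper dispatches this step in one line --- it takes $U$ and $V$ to be ``the $R$-submodules generated by $F$ and $KF$'' and asserts $\pi_i\phi({\bf e}_jf)=\pi_i\phi({\bf e}_j)f\in V$ --- without engaging with the left/right distinction at all; in every application in the paper one has $r\sigma=\sigma r$ for all $r\in R$ and $\sigma\in\Sigma$, so the distinction evaporates there. In short: you have correctly located the only nontrivial point, your suspicion that it is delicate in the stated generality is well founded, but your write-up stops exactly where a complete proof (or an added hypothesis such as $R\sigma=\sigma R$ for $\sigma\in\Sigma$) is required.
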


\begin{proof} 
For the proofs of the two statements, we employ the following notation: for any set $X\subseteq \Sigma$ and $r\in \mathbb Z^+$, let $S^r_X$ denote the $R$-submodule of $S^r$ consisting of all the elements whose components reside in the $R$-submodule of $S$ that is generated by $X$. In addition, we write $\pi_X^r:S^r\to S^r_X$ for the projection $R$-module epimorphism.  

We establish statement (i) by proving the contrapositive. 
Suppose that $S$ fails to satisfy the strong rank condition. Then there are positive integers $m<n$ and an $S$-module monomorphism $\phi:S^n\to S^m$. 
For each $j=1,\dots,n$, define ${\bf e}_j$ to be the element of $S^n$ that has $1$ in the $j$th position and $0$ everywhere else. We can find a finite subset $K$ of $\Sigma$ such that $\phi({\bf e}_j)\in S_K^m$ for $j=1,\dots, n$.  

According to the hypothesis, there is a finite subset $F$ of $\Sigma$ such that $P:=KF\setminus \{0\}$ is contained in $\Sigma$ and $m\, |P|< n\, |F|$. 
For any $f\in F$, we have $\phi({\bf e}_jf)=\phi({\bf e}_j)f\in S_P^m$ for  $j=1,\dots, n$.  Since $S_F^n$ is generated as an $R$-module by the subset $\{{\bf e}_jf:f\in F, j=1,\dots, n\}$, it follows that $\phi(S_F^n)\subseteq S_P^m$. But this then means that there is an $R$-module monomorphism $R^{n|F|}\to R^{m|P|}$, implying that $R$ fails to satisfy the strong rank condition. This concludes the proof of (i). 

To establish (ii), we again prove the contrapositive. 
Suppose that $S$ fails to satisfy the rank condition. This means that there are integers $m>n>0$ and an $S$-module epimorphism $\phi:S^n\to S^m$. For each $i=1,\dots,m$, define ${\bf e}_i$ to be the element of $S^m$ that has $1$ in the $i$th position and $0$ everywhere else. Take ${\bf a}_1,\dots, {\bf a}_m\in S^n$ such that $\phi({\bf a}_i)={\bf e}_i$ for every $i=1,\dots, m$.  There is a finite subset $K$ of $\Sigma$ such that, for every $i=1,\dots, m$, we have ${\bf a}_i\in S^n_K$.  Moreover, according to the hypothesis, there is a finite subset $F$ of $\Sigma$ such that $P:=KF\setminus \{0\}$ is contained in $\Sigma$ and
$n\, |P| < m\, |F|.$ 

We claim that $S^m_F\subseteq \phi(S^n_P)$. To see this, let $f\in F$ and notice that 
\[{\bf e}_if=\phi({\bf a}_i)f=\phi({\bf a}_if)\in \phi(S^n_{P})\]
for $i=1,\dots,m$. Since the set $\{{\bf e}_if:f\in F, i=1,\dots, m\}$  generates $S^m_F$ as an $R$-module, this verifies our claim. 

Now let $\phi_{P}:S^n_{P}\to S^m$ be the restriction of $\phi$ to $S^n_{P}$. In view of the above claim, the composition $\pi^m_F\phi_{P}:S^n_{P}\to S^m_{F}$ must be an $R$-module epimorphism. But this then gives rise to an $R$-module epimorphism $R^{n|P|}\to R^{m|F|}$, which means that $R$ fails to satisfy the rank condition. 
\end{proof}

We conclude this section by discussing the properties of amenability and exhaustive amenability for algebras over fields. 
We will adopt the following definitions of these two notions, which are  equivalent to the ones given in \cite{Amenability} and \cite{Bartholdi} for algebras possessing a unit element. 

\begin{definition}{\rm Let $\mathbb K$ be a field and $A$ a $\mathbb K$-algebra. We say that $A$ is {\it amenable} if, for any finite dimensional subspace $U$ of $A$ and real number $p>1$, there is a finite dimensional subspace $V$ of $A$ such that 
\begin{equation} \dim_{\mathbb K} \left (UV\right ) < p\, \dim_{\mathbb K} V.\end{equation} 

We refer to $A$ as {\it exhaustively amenable} if, for every pair of finite dimensional subspaces $U, W$of $A$ and every real number $p>1$, there is a finite dimensional subspace $V$ of $A$ such that $W\subseteq V$ and (2.3) holds. 

If $A^{\rm op}$ is amenable (respectively, exhaustively amenable), then we refer to $A$ as \emph{right amenable} (respectively, \emph{exhaustively right amenable}).
 } 
\end{definition}

Amenability for algebras was introduced by G. Elek \cite{Elek}, who used the term ``amenable" for our notion of exhaustively amenable. His interest, though, was focused on domains, for which the two varieties of amenability are equivalent (see \cite[Corollary 3.10]{Amenability}).  
It was Bartholdi \cite{Bartholdi} who was the first to make the distinction between amenable and exhaustively amenable algebras. The most comprehensive investigation of these two types of algebras was undertaken in \cite{Amenability}, where amenable algebras were referred to as \emph{algebraically amenable} and exhaustively amenable ones as \emph{properly algebraically amenable}.  

In \cite{Amenability}, the following convenient alternative characterization of exhaustive amenability for infinite dimensional algebras was established. We include the proof for the sake of completeness.

\begin{proposition}[{Ara, Li, Lled\'o, Wu \cite[Proposition 3.5(2)]{Amenability}}] Let $\mathbb K$ be a field and $A$ an infinite dimensional $\mathbb K$-algebra. Then $A$ is exhaustively amenable if and only if, for every finite dimensional subspace $U$ of $A$, every integer $N$, and every real number $p>1$, there is a finite dimensional subspace $V$ of $A$ such that $\dim_{\mathbb K} V>N$ and 
 \[\dim_{\mathbb K} \left (UV\right ) < p\, \dim_{\mathbb K} V.\]
 \end{proposition}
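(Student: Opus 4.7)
The ``only if'' direction is immediate, and I would dispatch it first: given $U$, $N$, and $p>1$, I would choose any finite-dimensional subspace $W$ of $A$ with $\dim_{\mathbb K} W > N$, which is possible because $A$ is infinite dimensional. Applying exhaustive amenability to the triple $(U, W, p)$ produces a finite-dimensional $V$ with $W \subseteq V$ and $\dim_{\mathbb K}(UV) < p\, \dim_{\mathbb K} V$; the inclusion $W \subseteq V$ forces $\dim_{\mathbb K} V > N$.

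The ``if'' direction is where the real content lies. Given finite-dimensional $U, W \subseteq A$ and $p > 1$, my plan is to absorb $W$ simultaneously into the ``input'' space and into the desired $V$. Concretely, I would set $\tilde U := U + W$, fix an auxiliary $p'$ with $1 < p' < p$, and pick an integer $N$ large enough that $\dim_{\mathbb K}(UW) < (p - p')\, N$ (note that $\dim_{\mathbb K}(UW)$ is a fixed finite quantity, so such $N$ exists). Invoking the hypothesis on the triple $(\tilde U, N, p')$ furnishes a finite-dimensional $V_0 \subseteq A$ satisfying $\dim_{\mathbb K} V_0 > N$ and $\dim_{\mathbb K}(\tilde U V_0) < p'\, \dim_{\mathbb K} V_0$. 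I would then define $V := V_0 + W$, which automatically contains $W$ and satisfies $\dim_{\mathbb K} V \ge \dim_{\mathbb K} V_0 > N$.

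The verification of $\dim_{\mathbb K}(UV) < p\, \dim_{\mathbb K} V$ proceeds from the chain of inclusions $UV \subseteq U V_0 + U W \subseteq \tilde U V_0 + U W$, whence
\[ \dim_{\mathbb K}(UV) \le \dim_{\mathbb K}(\tilde U V_0) + \dim_{\mathbb K}(UW) < p'\, \dim_{\mathbb K} V_0 + (p - p')\, \dim_{\mathbb K} V_0 = p\, \dim_{\mathbb K} V_0 \le p\, \dim_{\mathbb K} V, \]
using $\dim_{\mathbb K}(UW) < (p-p')N < (p-p')\dim_{\mathbb K} V_0$ in the middle step and $V_0 \subseteq V$ at the end. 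The only real point to watch is that the additive error term $\dim_{\mathbb K}(UW)$ must be absorbable into the multiplicative gap $p - p'$, and this is precisely what the freedom to make $V_0$ arbitrarily large — granted by the hypothesis — delivers. Beyond this piece of bookkeeping, I do not anticipate any substantial obstacle.
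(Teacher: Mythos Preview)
Your proof is correct and follows essentially the same route as the paper's: for the ``if'' direction, apply the hypothesis with some intermediate $p'<p$ to obtain a large $V_0$, then set $V:=V_0+W$ and absorb the fixed error $\dim_{\mathbb K}(UW)$ into the gap $(p-p')\dim_{\mathbb K}V_0$. The paper simply fixes $p'=(p+1)/2$ and applies the hypothesis directly to $U$ rather than to $\tilde U=U+W$; your enlargement to $\tilde U$ is harmless but not needed, since the chain $\dim_{\mathbb K}(UV)\le\dim_{\mathbb K}(UV_0)+\dim_{\mathbb K}(UW)$ already suffices.
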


\begin{proof} The ``only if" part is trivial. To prove the ``if" assertion, let $U$ and $W$ be finite dimensional subspaces of $A$, and let $p$ be a real number such that $p>1$. Then there is a finite dimensional subspace $V$ of $A$ such that 
 \[\dim_{\mathbb K} V>\frac{2\, \dim_{\mathbb K}\left (UW\right )}{p-1}\ \ \mbox{and}\ \ \dim_{\mathbb K} \left (UV\right ) < \frac{p+1}{2}\, \dim_{\mathbb K} V.\]
 Set $V^\prime:=V+W$. Then
 \[\dim_{\mathbb K} \left (UV^\prime\right ) \leq \dim_{\mathbb K}\left (UV\right )+\dim_{\mathbb K}\left (UW\right )<\frac{p+1}{2}\, \dim_{\mathbb K} V+\frac{p-1}{2}\, \dim_{\mathbb K} V\leq p\, \dim_{\mathbb K} V^\prime.\]
Therefore $A$ is exhaustively amenable.
 \end{proof}

In the next section, we will find the following proposition useful. 

\begin{proposition} Let $\mathbb K$ be a field and $p$ a real number larger than $1$. Then the following two statements hold. 
\begin{enumerate*}
\item A $\mathbb K$-algebra $A$ is not amenable if and only if there is a finite dimensional subspace $U$ of $A$ such that
\[\dim_{\mathbb K} \left (UV\right )\geq p\, \dim_{\mathbb K} V\]
for every finite dimensional subspace $V$ of $A$.
\item An infinite dimensional $\mathbb K$-algebra $A$ is not exhaustively amenable if and only if there is a finite dimensional subspace $U$ of $A$ and a positive integer $N$ such that
\[\dim_{\mathbb K} \left (UV\right )\geq p\, \dim_{\mathbb K} V\]
for every finite dimensional subspace $V$ of $A$ for which $\dim_{\mathbb K} V > N$.
\end{enumerate*}
\end{proposition}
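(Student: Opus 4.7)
The plan is as follows. The ``if'' direction in each of (i) and (ii) is immediate from the definition of (exhaustive) amenability: the specified subspace $U$ (together with $N$ in the case of (ii)) serves directly as a witness to non-(exhaustive)-amenability at the fixed real number $p$. The substance of the proof therefore lies in the ``only if'' direction, where one must convert a witness against \emph{some} $p_0 > 1$ into a witness against the \emph{prescribed} $p > 1$. The key trick in both parts will be to replace a given witness $U_0$ with the subspace $U_0^n$ spanned by all products of length $n$ from $U_0$, and then exploit the resulting multiplicative amplification.

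For (i), negating Definition 2.21 produces a finite-dimensional subspace $U_0$ of $A$ and a real number $p_0 > 1$ such that
\[\dim_{\mathbb K}(U_0 V) \geq p_0 \dim_{\mathbb K} V\]
for every finite-dimensional subspace $V$ of $A$. I would choose a positive integer $n$ with $p_0^n \geq p$ and set $U := U_0^n$. Since $U_0^{n-1}V$ is itself a finite-dimensional subspace of $A$, iteratively applying the inequality yields
\[\dim_{\mathbb K}(UV) = \dim_{\mathbb K}\bigl(U_0(U_0^{n-1}V)\bigr) \geq p_0 \dim_{\mathbb K}(U_0^{n-1}V) \geq \cdots \geq p_0^n \dim_{\mathbb K} V \geq p \dim_{\mathbb K} V,\]
which is the desired bound.

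For (ii), Proposition 2.21 supplies the natural alternate characterization of exhaustive amenability to negate, producing $U_0$, a positive integer $N_0$, and a real number $p_0 > 1$ such that $\dim_{\mathbb K}(U_0 V) \geq p_0 \dim_{\mathbb K} V$ for every finite-dimensional subspace $V$ with $\dim_{\mathbb K} V > N_0$. Selecting $n$ as above and setting $U := U_0^n$ and $N := N_0$, the same iterative estimate goes through provided each intermediate subspace $U_0^k V$ (for $0 \leq k \leq n-1$) has dimension greater than $N_0$, so that the hypothesis on $U_0$ remains applicable. This is straightforward: inductively, $\dim_{\mathbb K}(U_0^{k+1} V) \geq p_0 \dim_{\mathbb K}(U_0^k V) > p_0 N_0 > N_0$, using $p_0 > 1$ at the last step.

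The main obstacle, such as it is, is the minor bookkeeping in the amplification step for (ii), namely confirming that every intermediate subspace is large enough to invoke the hypothesis on $U_0$. Otherwise, both parts reduce to the same ``raise the witness to a power'' device, so the real content of the proposition is the decoupling of the bound $p$ in the conclusion from whatever $p_0 > 1$ originally witnesses the failure of amenability.
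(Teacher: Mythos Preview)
Your proof is correct and follows essentially the same approach as the paper: the paper also negates the definition (respectively, Proposition~2.22 for part (ii)) to obtain a witness $W$ for some $q>1$, observes that $\dim_{\mathbb K}(W^nV)\geq q^n\dim_{\mathbb K}V$, and then sets $U:=W^n$ for $n$ with $q^n\geq p$. Your explicit verification in (ii) that the intermediate subspaces $U_0^kV$ remain large enough is exactly the ``easy modification'' the paper alludes to; note only that the characterization of exhaustive amenability you invoke is Proposition~2.22, not~2.21.
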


\begin{proof} We confine ourselves to proving (i); our argument can easily be modified, with the aid of Proposition 2.22, to yield a proof of (ii). The ``if" statement follows immediately from the definition of amenability. To prove the ``only if" part, we assume that $A$ is not amenable. This means that, for some real number $q>1$, there is a finite dimensional subspace $W$ of $A$ such that $\dim_{\mathbb K} \left (WV\right )\geq q\, \dim_{\mathbb K} V$ for every finite dimensional subspace $V$ of $A$. It follows that $\dim_{\mathbb K} \left (W^nV\right )\geq q^n\, \dim_{\mathbb K} V$ for every positive integer $n$ and finite dimensional subspace $V$. Thus, if we choose $n$ so that $q^n\geq p$ and set $U:=W^n$, then we have $\dim_{\mathbb K} \left (UV\right )\geq p\, \dim_{\mathbb K} V$ for every finite dimensional subspace $V$.
\end{proof}

We now connect the property of amenability to the strong rank condition, proving that every amenable algebra over a field satisfies the strong rank condition. This result generalizes \cite[Corollary 4.9]{Amenability}, where it is shown that the generating number of an amenable algebra is greater than one. 

\begin{proposition} Let $\mathbb K$ be a field and $A$ a $\mathbb K$-algebra. If $A$ is amenable, then $A$ must satisfy the strong rank condition.
\end{proposition}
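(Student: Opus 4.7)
The plan is to prove the contrapositive, invoking the criterion for non-amenability supplied by Proposition 2.24(i). Assuming $A$ fails the strong rank condition, Lemma 2.6 yields positive integers $n>m$ together with a right $A$-module monomorphism $\phi:A^n\to A^m$. The task is then to manufacture a single finite dimensional $\mathbb K$-subspace $U$ of $A$ that satisfies $\dim_{\mathbb K}(UV)\geq (n/m)\dim_{\mathbb K}V$ for \emph{every} finite dimensional subspace $V$ of $A$; once this is in hand, Proposition 2.24(i) applied with $p:=n/m>1$ forces $A$ to be non-amenable.

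I would follow the template used in the strong-rank half of the proof of Proposition 2.20(i). Let $\mathbf{e}_1,\dots,\mathbf{e}_n$ denote the standard right $A$-module generators of $A^n$, and let $\pi_1,\dots,\pi_m:A^m\to A$ be the coordinate projections. Define $U$ to be the $\mathbb K$-span of the finite set $\{\pi_i\phi(\mathbf{e}_j):1\leq i\leq m,\ 1\leq j\leq n\}$. The crucial observation is that, for any finite dimensional subspace $V\subseteq A$ and any $(v_1,\dots,v_n)\in V^n$, the right $A$-linearity of $\phi$ and each $\pi_i$ gives
\[ \pi_i\phi(v_1,\dots,v_n)\;=\;\sum_{j=1}^n \pi_i\phi(\mathbf{e}_j)\,v_j\;\in\;UV, \]
so $\phi(V^n)\subseteq (UV)^m$.

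Combining this containment with the injectivity of $\phi$ (which ensures $\dim_{\mathbb K}\phi(V^n)=\dim_{\mathbb K}V^n=n\dim_{\mathbb K}V$) yields
\[ n\,\dim_{\mathbb K}V\;=\;\dim_{\mathbb K}\phi(V^n)\;\leq\;\dim_{\mathbb K}(UV)^m\;=\;m\,\dim_{\mathbb K}(UV), \]
which rearranges to the desired bound. Proposition 2.24(i), applied with $p=n/m$, then completes the contrapositive. The one step requiring care is verifying that $\phi(V^n)\subseteq (UV)^m$; this is essentially a bookkeeping matter of tracking the right-module linearity through the definition of $U$. Beyond that, the argument is dimension counting, with $\dim_{\mathbb K}$ playing the role that the cardinalities $|F|$ and $|KF\setminus\{0\}|$ played in Proposition 2.20.
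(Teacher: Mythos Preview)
Your argument is correct and essentially identical to the paper's: both construct the same subspace $U$ spanned by the entries $\pi_i\phi(\mathbf e_j)$, observe that $\phi(V^n)\subseteq (UV)^m$ by right-linearity, and extract the dimension inequality $n\dim_{\mathbb K}V\le m\dim_{\mathbb K}(UV)$ from injectivity. The only cosmetic difference is that the paper phrases it as a contradiction (assume $A$ amenable, pick a $V$ violating the inequality) while you prove the contrapositive outright and then appeal to the non-amenability criterion; also note that your reference to ``Proposition~2.24(i)'' should read Proposition~2.23(i) (Proposition~2.24 is the statement you are proving), and in fact the bare definition of amenability already suffices for that final step.
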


\begin{proof}
Suppose that $A$ fails to satisfy the strong rank condition. Then there are positive integers $m<n$ and an $A$-module monomorphism $\phi:A^n\to A^m$. 
For each $j=1,\dots,n$, define ${\bf e}_j$ to be the element of $A^n$ that has $1$ in the $j$th position and $0$ everywhere else. Also, for $i=1,\dots, m$, let $\pi_i:A^m\to A$ be the $A$-module homomorphism defined by projecting onto the $i$th summand. 

Let $U$ be the $\mathbb K$-subspace of $A$ spanned by the elements $\pi_i\phi({\bf e}_j)$ for $i=1,\dots, m$ and $j=1,\dots, n$. 
Since $A$ is amenable, there is a finite dimensional $\mathbb K$-subspace $V$ of $A$ such that 
\[ m\, \dim_{\mathbb K} (UV) < n\, \dim_{\mathbb K}(V).\]

For any $v\in V$, we have $\pi_i\phi({\bf e}_jv)=\pi_i\phi({\bf e}_j)v\in UV$ for $i=1,\dots, m$ and $j=1,\dots, n$.  As a result, $\phi$ induces a $\mathbb K$-linear monomorphism $V^n\to (UV)^m$. But this means 
\[ m\, \dim_{\mathbb K} (UV) \geq n\, \dim_{\mathbb K}(V),\]
a contradiction. Therefore $A$ satisfies the strong rank condition.
\end{proof}

For the proofs of Theorems 2 and 3, we will require the following easy lemma that can be employed to prove that an algebra is amenable or exhaustively amenable.

\begin{lemma} Let $\mathbb K$ be a field and $A$ a $\mathbb K$-algebra with $\mathbb K$-basis $\Sigma$. Suppose that, for any real number $p>1$ and finite subset $K$ of $\Sigma$, there is a finite subset $F$ of $\Sigma$ such that $|KF\setminus\{0\}|<p|F|$. Then $A$ is amenable. 

If, in addition, the subset $F$ can be chosen so that it is arbitrarily large, then $A$ is exhaustively amenable. 
\end{lemma}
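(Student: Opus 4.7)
My plan is to deduce both amenability statements by translating the combinatorial hypothesis on $\Sigma$ directly into the spanning inequality required by Definition 2.21.

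For the amenability claim, I would start with an arbitrary finite-dimensional subspace $U$ of $A$ and a real number $p>1$. Since $\Sigma$ is a $\mathbb K$-basis of $A$, each element of $U$ is a finite $\mathbb K$-linear combination of elements of $\Sigma$, so there is a finite subset $K\subseteq \Sigma$ with $U\subseteq \mathrm{span}_{\mathbb K}(K)$. Apply the hypothesis to $K$ and $p$ to obtain a finite $F\subseteq \Sigma$ with $|KF\setminus\{0\}|<p|F|$. Set $V:=\mathrm{span}_{\mathbb K}(F)$. Because $F\subseteq \Sigma$ is linearly independent, $\dim_{\mathbb K} V=|F|$. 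Next observe that
\[UV\subseteq \mathrm{span}_{\mathbb K}(K)\cdot \mathrm{span}_{\mathbb K}(F)\subseteq \mathrm{span}_{\mathbb K}(KF)=\mathrm{span}_{\mathbb K}(KF\setminus\{0\}),\]
so that $\dim_{\mathbb K}(UV)\leq |KF\setminus\{0\}|<p|F|=p\dim_{\mathbb K} V$. This is exactly the defining inequality for amenability.

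For exhaustive amenability, I would split into two cases. If $A$ is finite-dimensional, one can simply take $V=A$ in Definition 2.21, which trivially satisfies $W\subseteq V$ and $\dim_{\mathbb K}(UV)\leq \dim_{\mathbb K} A=\dim_{\mathbb K} V<p\dim_{\mathbb K} V$. If $A$ is infinite-dimensional, I would invoke Proposition 2.22: given a finite-dimensional subspace $U$, a positive integer $N$, and $p>1$, choose $K\subseteq \Sigma$ finite with $U\subseteq \mathrm{span}_{\mathbb K}(K)$, and then use the strengthened hypothesis to pick $F\subseteq \Sigma$ satisfying both $|KF\setminus\{0\}|<p|F|$ and $|F|>N$. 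The same computation as above shows that $V:=\mathrm{span}_{\mathbb K}(F)$ satisfies $\dim_{\mathbb K} V>N$ and $\dim_{\mathbb K}(UV)<p\dim_{\mathbb K} V$, so Proposition 2.22 yields exhaustive amenability.

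There is no real obstacle here; the only mild subtlety is remembering that $\dim_{\mathbb K}\mathrm{span}_{\mathbb K}(KF)\leq |KF\setminus\{0\}|$ requires no independence assumption on $KF$ itself, only the trivial observation that a spanning set of size $k$ forces dimension at most $k$, and that the zero element of $KF$ contributes nothing to the span. Once this is in hand, both halves of the lemma reduce to bookkeeping with the basis $\Sigma$.
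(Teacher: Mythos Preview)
Your proof is correct and follows essentially the same route as the paper: choose $K\subseteq\Sigma$ with $U\subseteq\operatorname{span}_{\mathbb K}K$, apply the hypothesis to obtain $F$, set $V=\operatorname{span}_{\mathbb K}F$, and bound $\dim_{\mathbb K}(UV)\le|KF\setminus\{0\}|<p|F|=p\dim_{\mathbb K}V$, invoking Proposition~2.22 for the exhaustive case. Your separate treatment of the finite-dimensional case is harmless but unnecessary, since the hypothesis that $F$ can be chosen arbitrarily large already forces $\Sigma$ (and hence $A$) to be infinite-dimensional.
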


\begin{proof} Let $p$ be a real number greater than $1$, and let $U$ be a finite dimensional subspace of $A$. Then we can find a finite subset $K$ of $\Sigma$ such that $U\subseteq {\rm span}_{\mathbb K} K$. Moreover, the hypothesis furnishes a finite subset $F$ of $\Sigma$ such that $|KF\setminus\{0\}|<p|F|$. Let $V$ be the subspace of $A$ spanned by $F$. Then 
\[\dim_{\mathbb K} (UV) \leq |KF\setminus\{0\}|<p|F|=p\, \dim_{\mathbb K} V.\]
Therefore $A$ is amenable. Moreover, if $F$ can be chosen to be arbitrarily large, then $\dim_{\mathbb K} V$ can be made arbitrarily large, in which case, $A$ will be exhaustively amenable. 
\end{proof}

\section{Path rings}

In this section, we establish our results about path rings: Theorem 1 is about the strong rank condition, and Theorems 2 and 3 deal with amenability and exhaustive amenability, respectively.  
We begin by observing that every path ring satisfies the rank condition if its coefficient ring does.

\begin{proposition} Let $E$ be a nonempty directed graph with $E^0$ finite, and let $R$ be a ring. Then $RE$ satisfies the rank condition if and only if $R$ satisfies the rank condition.
\end{proposition}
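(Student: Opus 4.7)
The biconditional splits into two directions of very different difficulty.

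For the ``only if'' direction, I would simply invoke Lemma 2.7 on the canonical inclusion $R \hookrightarrow RE$ sending $r \mapsto r \cdot 1_{RE}$: if the rank condition holds in $RE$, it must hold in $R$.

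For the ``if'' direction, the strategy is to construct a ring homomorphism from $RE$ onto a ring that manifestly inherits the rank condition from $R$, and then apply Lemma 2.7 in the opposite direction. The natural target is the direct product $\prod_{v \in E^0} R$, and the map is the ``augmentation'' $\phi \colon RE \to \prod_{v \in E^0} R$ that kills every edge and records the vertex idempotents componentwise. Explicitly, set $\phi(r) := (r, \ldots, r)$ for $r \in R$, set $\phi(e) := 0$ for every $e \in E^1$, and set $\phi(v)$ equal to the tuple with $1$ in the $v$-coordinate and $0$ elsewhere for every $v \in E^0$. Verifying relations (i)--(iv) from the definition of $RE$ is routine: the images of vertices are orthogonal idempotents summing to the unit of $\prod_{v \in E^0} R$, products involving edges vanish, and scalars sit diagonally so they commute with everything. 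Since $E^0$ is finite and nonempty, induction on $|E^0|$ via Lemma 2.8 yields that $\prod_{v \in E^0} R$ satisfies the rank condition whenever $R$ does, and Lemma 2.7 applied to $\phi$ then transfers the property to $RE$.

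There is essentially no obstacle in this argument; the interesting content of the proposition is the conceptual contrast with Theorem 1. Unlike the strong rank condition, the rank condition requires no graph-theoretic hypothesis here, and the underlying reason is that Lemma 2.7 -- free transfer of the rank condition along an arbitrary ring homomorphism -- has no analogue for SRC. One therefore cannot simply collapse the edges to zero when studying SRC, which is what forces the more delicate analysis of Theorem 1.
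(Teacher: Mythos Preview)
Your proof is correct and follows the same strategy as the paper: use Lemma~2.7 in both directions, with the structural map $R\to RE$ for ``only if'' and an augmentation-type map killing all edges for ``if.'' The paper's version of the second map is slightly leaner: instead of mapping to $\prod_{v\in E^0} R$ and invoking Lemma~2.8, it fixes a single vertex $v$ and defines $\pi_v\colon RE\to R$ by reading off the coefficient of $v$ in the path-basis expansion; this is already a ring epimorphism onto $R$, so one application of Lemma~2.7 suffices and Lemma~2.8 is not needed. Your map is exactly the product $\prod_v \pi_v$, so the two arguments differ only in whether one projects further to a single coordinate.
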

\begin{proof} The ``only if" assertion is an immediate consequence of Lemma 2.7. To establish the ``if" statement, assume that $R$ satisfies the rank condition. Let $v\in E^0$ and define the $R$-module epimorphism $\pi_v:RE\to R$ by, for every $x\in RE$, taking $\pi_v(x)$ to be the coefficient of $v$ in the unique representation of $x$ as a linear combination of the elements of ${\rm Path}(E)$. Then $\pi_v:RE\to R$ is also a ring homomorphism.
Hence, by Lemma 2.7, $RE$ fulfills the rank condition.
\end{proof}

For the proof of Theorem 1, we will employ the following notation: if $X$ is a set of vertices of a directed graph $E$, then we write $M(X):=\{v\in E^0:v\geq x\, \ \mbox{for some $x\in X$}\}$. 

\begin{proof}[Proof of Theorem 1]  The implication (i)$\implies$(ii) is a consequence of Corollary 2.16 and the definition of the strong rank condition. We establish (ii)$\implies$(iii) by proving its contrapositive. Hence assume that $R$ fails to satisfy the strong rank condition or both (1) and (2) are false. In the former case, statement (ii) is plainly false. Hence we must just treat the case where $X=E^0\neq \emptyset$ and every maximal cycle of $E$ is nonexclusive. There are, then, distinct maximal cycles $\omega_1,\dots,\omega_n, \xi_1,\dots,\xi_n$ with the following properties.
\begin{itemize}
\item For $i=1,\dots, n$, the cycles $\omega_i$ and $\xi_i$ are based at the same vertex $v_i$.
\item For every cycle $\theta$, there is an integer $i\in [1,n]$ such that $\omega_i\geq \theta$.
\end{itemize}
 
 For each $i=1,\dots,n$, let $v_{i1},v_{i2},\dots,v_{im_i}$ be a complete list of the distinct vertices of $E$ that can be reached from $v_i$ along a path in the graph. Then $E^0=\{v_{ij}:{1\leq i\leq n,}\, {1\leq j\leq m_i}\}$. For each $i=1,\dots, n$ and $j=1,\dots, m_i$, select a single path $\mu_{ij}$ from $v_i$ to $v_{ij}$.
Also, for every $i=1,\dots, n$, set
\[a_i:=\sum_{j=1}^{m_i}\omega_i^j\, \xi_i\, \mu_{ij},\ \ \ \ b_i:=\sum_{j=1}^{m_i}\xi_i^j\, \omega_i\, \mu_{ij}.\]
We claim that the equation 
\begin{equation}
\left (\sum_{i=1}^n a_i\right )x+ \left (\sum_{i=1}^n b_i\right )y = 0
\end{equation}
has no nonzero solution $(x,y)$ in $RE$, implying that there is an $RE$-module monomorphism $\left (RE\right )^2\to RE$. To establish the claim, suppose that $(x,y)$ satisfies (3.1) and write 
\[x:=\sum_{\gamma\in {\rm Path}(E)} \gamma\, r_{\gamma},\ \ \ \ \ y:=\sum_{\gamma\in {\rm Path}(E)} \gamma\, s_{\gamma},\]
where $r_{\gamma}, s_{\gamma}\in R$ for all $\gamma\in {\rm Path}(E)$, with at most finitely many of these elements of $R$ nonzero. 

For every $\gamma\in {\rm Path}(E)$
and $i=1,\dots,n$, we define
\[\mathcal{A}^{\gamma}_i:= \{\omega_i^j\, \xi_i\, \mu_{ij}\, \gamma:j=1,\dots, m_i\}\setminus \{0\},\ \ \ \ \mathcal{B}^{\gamma}_i:= \{\xi_i^j\, \omega_i\, \mu_{ij}\, \gamma:j=1,\dots,m_i\}\setminus \{0\}.\]
Observe that, for each $\gamma\in {\rm Path}(E)$ and $i=1,\dots,n$, the sets $\mathcal{A}^{\gamma}_i$ and $\mathcal{B}^{\gamma}_i$ are either both empty or both singletons. Notice further that, if $\gamma\in {\rm Path}(E)$, then there exists at least one integer $i\in [1,n]$ such that $\mathcal{A}_i^{\gamma}$ and $\mathcal{B}_i^{\gamma}$ are nonempty. 
In addition, the following three assertions hold for all $\gamma, \delta\in {\rm Path}(E)$ and $i,k=1,\dots,n$:
\begin{itemize}
\item $\mathcal{A}^\gamma_i\cap \mathcal{B}^\delta_k = \emptyset$;
\item $\mathcal{A}^\gamma_i\cap \mathcal{A}_k^\delta = \emptyset$ if $(i,\gamma)\neq (k,\delta)$; 
\item $\mathcal{B}^\gamma_i\cap \mathcal{B}_k^\delta = \emptyset$ if $(i,\gamma)\neq (k,\delta)$. 
\end{itemize}

For every $\gamma\in {\rm Path}(E)$ and $i=1,\dots, n$, let $\alpha_i^\gamma$ and $\beta_i^\gamma$ be the unique elements of $\mathcal{A}_i^\gamma$ and $\mathcal{B}_i^\gamma$, respectively, if these two sets are nonempty and set $\alpha_i^\gamma:=\beta_i^\gamma:=0$ if $\mathcal{A}_i^\gamma=\mathcal{B}_i^\gamma=\emptyset$ . 
With this notation, equation (3.1) can be rearranged to form the equation
\[\mathlarger{\mathlarger{\sum}}_{\gamma\in {\rm Path}(E)}\sum_{i=1}^n \left (\alpha_i^\gamma r_{\gamma} +  \beta_i^\gamma s_{\gamma}\right ) = 0.
\]
As a result, we have $r_{\gamma}=s_{\gamma}=0$ for all $\gamma\in {\rm Path}(E)$; that is, $x=y=0$.  This completes the proof of (ii)$\implies$(iii). 

It remains to show that (iii)$\implies$(i). Assume that $R$ satisfies the strong rank condition and that (1) or (2) is true. First we treat the case where (1) holds and $E^0\setminus X$ contains an infinite emitter.  We can thus find a maximal infinite emitter $v$ in $E^0\setminus X$ with respect to the quasiordering.  Let $F$ be the set of all paths that end at $v$. Then the paths in $F$ contain no cycles and no infinite emitters other than $v$. As a result, $F$ must be finite. Also, $KF\setminus \{0\}\subseteq F$ for every finite set $K\subseteq {\rm Path}(E)$. 
It follows, then, from Proposition 2.20 that $RE$ fulfills the strong rank condition. 

Next we assume that (1) is true and $E^0\setminus X$ has no infinite emitters. In this case, we let $F$ be the set of all paths whose vertices lie outside of $X$. Because none of the paths in $F$ contain either cycles or infinite emitters, the number of paths in $F$ must be finite. Moreover, if $K$ is an arbitrary finite subset of ${\rm Path}(E)$, then $KF\setminus \{0\}\subseteq F$. Therefore, by Proposition 2.20, $RE$ satisfies the strong rank condition.  

Finally, we handle the case that (2) holds and (1) does not. Let $\omega$ be an exclusive maximal cycle, and let $v$ be the basepoint of $\omega$. We maintain that $M(v)={\rm Vert}(\omega)$.
To show this, let $w\in M(v)$. Since $X=E^0$, there is a cycle $\xi$ and a path $\delta$ that starts on $\xi$ and ends at $w$. But then the maximality of $\omega$ implies the existence 
of a path from $v$ to $\xi$ and thus one from $v$ to $w$. Since $\omega$ is exclusive, we conclude that $w\in {\rm Vert}(\omega)$. Hence $M(v)={\rm Vert}(\omega)$, as claimed. The exclusivity of $\omega$ also guarantees that there are no edges connecting vertices of $\omega$ that are not part of $\omega$. As a result, the only paths terminating in $v$ that fail to contain $\omega$ are proper subpaths of $\omega$. We will denote this finite set of paths by~$P$. 

Our aim is to again employ Proposition 2.20 to establish that $RE$ fulfills the strong rank condition. To this end, we let $K$ be an arbitrary finite subset of ${\rm Path}(E)$, and let $p$ be a real number greater than $1$. If at least one of the paths in $K$ has $\omega$ as a subpath, then define $k$ to be the largest positive integer such that $\omega^k$ is a subpath of some path in $K$.  Moreover, if $\omega$ is not a subpath of any of the paths in $K$, then we put $k:=0$. 
Next choose $l$ to be a positive integer such that $\frac{k+l+1}{l}<p$, and define $F$ to be the set of all paths of the form $\gamma \omega^i$, where $1\leq i\leq l$ and  $\gamma\in P$. 

We will show that 
\begin{equation} KF\setminus \{0\}\subseteq \{\gamma \omega^i\ :\ 1\leq i\leq k+l+1\ \ \mbox{and}\ \ \gamma\in P\}.\end{equation}
To establish this containment, let $\alpha\in K$, $\gamma\in P$, and $i\in \mathbb Z^+$ such that $i\leq l$ and $\alpha\gamma\omega^i\neq 0$.  If $\alpha$ fails to contain $\omega$ as a subpath, then $\alpha\gamma\omega^i$  plainly resides in the set on the right of the containment expressed in (3.2). Otherwise there are a positive integer $n\leq k$ and paths $\lambda, \mu$  that fail to contain $\omega$ and such that $\alpha=\lambda\omega^n\mu$. Moreover, we have $s(\mu\gamma)=r(\mu\gamma)=v$. It follows, then, that either $\mu=\gamma=v$ or $\mu\gamma=\omega$. As a result, we have that either $\alpha\gamma\omega^i=\lambda\omega^{n+i}$ or $\alpha\gamma\omega^i=\lambda\omega^{n+i+1}$.
Therefore (3.2) holds. 

Statement (3.2) implies that $|KF\setminus \{0\} |\leq |P|(k+l+1)$. Since $|F|=|P|\, l$, we conclude that $|KF\setminus \{0\}|\leq \frac{k+l+1}{l}|F|<p|F|$. An appeal to Proposition 2.20, then, yields that $RE$ satisfies the strong rank condition. 
\end{proof}

 Applying Lemma 2.3(i), we can enunciate a version of Theorem 1 that refers to the left strong rank condition instead. 

\begin{corollary} Let $R$ be a ring and E a directed graph with finitely many vertices. Let $X$ be the smallest subset of $E^0$ such that $M(X)=X$ and $X$ contains all the vertices of all the cycles of $E$. Then the statements (i), (ii), and (iii) below are equivalent.
\begin{enumerate*}
\item $RE$ satisfies the left strong rank condition. 
\item $R$ satisfies the left strong rank condition, and there is no left $RE$-module monomorphism $\left (RE\right )^2\to RE$.
\item $R$ satisfies the left strong rank condition, and at least one of the assertions (1) or (2) holds:
\begin{enumeratenum}
\item $X\neq E^0$;
\item $E$ contains an exclusive minimal cycle.
\end{enumeratenum}
\end{enumerate*}
\end{corollary}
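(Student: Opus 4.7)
The plan is to deduce the corollary as a formal consequence of Theorem~1 by passing to opposite rings and opposite graphs via Lemma~2.3(i). Recall that a ring $S$ satisfies the left strong rank condition if and only if $S^{\rm op}$ satisfies the (right) strong rank condition. Applying this to $S=RE$ together with the isomorphism $(RE)^{\rm op}\cong R^{\rm op}\bigl[E^{\rm op}\bigr]$ from Lemma~2.3(i), we get that statement (i) of the corollary is equivalent to
\[
R^{\rm op}\bigl[E^{\rm op}\bigr]\ \text{satisfies the strong rank condition}.
\]
A completely analogous equivalence holds for the monomorphism condition in (ii): a left $RE$-module monomorphism $(RE)^2\to RE$ is the same as a right $(RE)^{\rm op}$-module monomorphism, which under the isomorphism becomes a right $R^{\rm op}[E^{\rm op}]$-module monomorphism $\bigl(R^{\rm op}[E^{\rm op}]\bigr)^2\to R^{\rm op}[E^{\rm op}]$. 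Similarly, $R$ satisfies the left strong rank condition iff $R^{\rm op}$ satisfies the (right) strong rank condition. So it suffices to verify that Theorem~1 applied to the ring $R^{\rm op}$ and the graph $E^{\rm op}$ yields exactly the graph-theoretic conditions in (iii) of the corollary.

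The remaining task is a dictionary translation between $E$ and $E^{\rm op}$. The quasiordering $\geq_{E^{\rm op}}$ on $E^0 = (E^{\rm op})^0$ is the reverse of $\geq_E$, because reversing every edge reverses all paths. Consequently a subset $Y\subseteq E^0$ is hereditary with respect to $\geq_{E^{\rm op}}$ precisely when, for every $y\in Y$ and every $v\in E^0$ with $v\geq_E y$, one has $v\in Y$; that is, exactly when $M(Y)=Y$ in the notation of the corollary. In particular, the smallest hereditary subset of $(E^{\rm op})^0$ containing all vertices on cycles of $E^{\rm op}$ coincides with the smallest $M$-closed subset of $E^0$ containing all vertices on cycles of $E$ (cycles of $E^{\rm op}$ are exactly reversed cycles of $E$, so they have the same vertex sets), which is the set $X$ in the statement. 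Moreover, a cycle of $E^{\rm op}$ is exclusive iff the corresponding reversed cycle in $E$ is exclusive, since exclusivity is phrased purely in terms of shared vertices. Finally, the quasiordering on cycles induced by $\geq_{E^{\rm op}}$ is the reverse of that induced by $\geq_E$, so maximal cycles of $E^{\rm op}$ are precisely the minimal cycles of $E$.

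With these identifications, condition (iii) of Theorem~1 applied to $R^{\rm op}$ and $E^{\rm op}$ reads: $R^{\rm op}$ satisfies the strong rank condition, and either $X\neq E^0$, or $E$ contains an exclusive minimal cycle. This is precisely condition (iii) of the corollary. Combining this with the reformulations of (i) and (ii) obtained in the first paragraph gives the three-way equivalence. No genuine obstacle is anticipated; the main points requiring care are the bookkeeping of ``hereditary in $E^{\rm op}$'' versus ``$M$-closed in $E$'' and the verification that the statement about $(RE)$-module monomorphisms transports correctly under the opposite-ring isomorphism, but both are routine consequences of Lemma~2.3(i) and the definitions.
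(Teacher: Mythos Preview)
Your proposal is correct and follows exactly the approach indicated in the paper, which simply cites Lemma~2.3(i) to pass to opposite rings and graphs. You have spelled out in full the dictionary (hereditary in $E^{\rm op}$ $\leftrightarrow$ $M$-closed in $E$, maximal $\leftrightarrow$ minimal, etc.) that the paper leaves implicit.
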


We now apply Theorem 1 to prove Theorems 2 and 3 from the introduction, which characterize amenable and exhaustively amenable path algebras, respectively. 

\begin{proof}[Proof of Theorem 2] The implication (i)$\implies$(ii) is a consequence of Proposition 2.24;
also, (ii)$\implies$(iii) is trivial. Furthermore, Theorem 1((ii)$\implies$(iii)) implies (iii)$\implies$(iv). Finally, the reasoning employed to prove the (iii)$\implies$(i) portion of Theorem 1 can be used to show (iv)$\implies$(i). For this, we simply need to substitute an appeal to Lemma~2.25 for each invocation of Proposition 2.20. 
\end{proof}

For the proof of Theorem 3, we require the notion of the restriction of a graph $E$ to a subset of $E^0$. 

\begin{definition}{\rm  Let $E$ be a directed graph and $X$ a subset of $E^0$. The \emph{restriction} of $E$ to $X$, denoted $E_X$, is the graph defined by 

\[\left (E_X\right )^0:=X; \ \ \ \left (E_X\right )^1:=\{e\in E^1\ :\ s(e)\in X\  \mbox{\rm and}\ r(e)\in X\},\]
where the range and source functions of $E_X$ are the restrictions of the ones for $E$.
}
\end{definition}

\begin{proof}[Proof of Theorem 3] We begin by establishing the ``if " part of the equivalence. Assume first that (ii) or (iii) holds. We pick a vertex $v$ in $T(E^0\setminus X)$ that is a maximal element of this set with respect to the property of lying on a cycle or being an infinite emitter.  Note that this means that any infinite emitter in $\left (M(v)\cap T(E^0\setminus X)\right )\setminus \{ v\}$ must necessarily lie on a cycle based at $v$. In addition, let us make our choice of the vertex $v$ so that, if $v\in X$, there is at least one path from $E^0\setminus X$ to $v$ such that every nonterminal vertex on that path fails to belong to a cycle. The latter condition can be ensured by, if necessary, replacing $v$ by the first vertex along a path from $E^0\setminus X$ to $v$ that is located on a cycle.

 Now let $P$ be the set of all paths $\gamma$ from $E^0\setminus X$ to $v$ such that $\gamma$ fails to possess any nonterminal vertex that lies on a cycle. It follows that every path in $P$ also fails to have any nonterminal vertices that are infinite emitters. The absence of cycles and nonterminal vertices that are infinite emitters along the paths in $P$  guarantees that the set $P$ must be finite. 
Another property that $P$ enjoys is that 
\begin{equation} \left ({\rm Path}(E)\right )P\subseteq P\cup \{0\}.\end{equation} 
To see this, suppose that $\alpha\gamma\neq 0$ for some path $\alpha$ and $\gamma\in P$. Then $r(\alpha)=s(\gamma)\notin X$, which means that all the vertices on $\alpha$ are situated outside of $X$. As a result, there are no vertices along $\alpha$ that belong to cycles. Therefore $\alpha\gamma\in P$, as desired.

Bearing in mind that $0<|P|<\aleph_0$ and (3.3) holds, we proceed to show that $\mathbb KE$ is exhaustively amenable. 
First we treat the case where $v$ is an infinite emitter. Take $K$ to be an arbitrary finite set of paths in $E$, and let $N$ be a positive integer. Choose distinct edges $e_1,\dots, e_n$ such that $n>N$ and $s(e_i)=v$ for $i=1,\dots,n$.  Let $F$ be the set of all paths of the form $\gamma e_i$ for $\gamma\in P$ and $i=1,\dots, n$. Then $N<|F|<\aleph_0$, and, by (3.3), $KF\setminus \{ 0\} \subseteq F$. Therefore, according to Lemma~2.25, the algebra $\mathbb KE$ is exhaustively amenable. 

A similar argument disposes of the case where $v$ lies on a cycle $\omega$. Under this assumption, let $K$ be again an arbitrary finite subset of ${\rm Path}(E)$ and $N$ an arbitrary positive integer. This time, define $F$ to be the set of all paths of the form $\gamma\omega^i$, where $0\leq i\leq N$ and $\gamma\in P$. As before, it follows from (3.3) that $KF\setminus \{0\}\subseteq F$. Since we also have $N<|F|<\aleph_0$, an appeal to Lemma~2.25 yields that $\mathbb KE$ is exhaustively amenable. 

Suppose next that (i) is true and (ii) is false. Then $E$ has no infinite emitters and no cycles. As a result, ${\rm Path}(E)$ is finite, so that $\mathbb KE$ is finite dimensional over $\mathbb K$.
Thus $\mathbb KE$ is exhaustively amenable. 

Assume now that statement (iv) holds and (iii) does not. 
Let $\omega$ be an exclusive maximal cycle with basepoint $v$. Since (iii) is false, we must have $M(v)\subseteq X$. Thus the same reasoning that was employed in the fourth-to-last paragraph of the proof of Theorem 1 establishes that all the paths with terminus $v$ that fail to contain $\omega$ are proper subpaths of $\omega$. 
 Furthermore, we can use exactly the same arguments as in the final three paragraphs of the proof of Theorem 1 to show that, for any finite set $K\subseteq {\rm Path}(E)$ and real number $p>1$, there is a finite set $F\subseteq {\rm Path}(E)$ such that $|KF\setminus \{0\}|<p|F|$. Moreover, the value of the number $l$ used in those arguments can be selected so that $|F|>N$. Therefore another invocation of Lemma 2.25 allows us to conclude that $\mathbb KE$ is exhaustively amenable. This completes the proof of the ``if" assertion. 

We prove the ``only if" statement by establishing its contrapositive. Assume that (i), (ii), (iii), and (iv) are all false. If $E^0=X$, then Theorem~2 implies that $\mathbb KE$ is not amenable and so not exhaustively amenable. Assume that $E^0\neq X$. Henceforth we will employ $P$ to denote the set of all paths whose sources lie outside $X$. Since (ii) and (iii) fail to hold, none of the paths in $P$ contain infinite emitters or cycles, which means that $P$ must be finite. 

According to Proposition~2.23(ii),  it will follow that $\mathbb KE$ is not exhaustively amenable if we can show that there are an integer $N$ and a finite dimensional subspace $U$ of $\mathbb KE$ such that $\dim_{\mathbb K}(UV)\geq 2\dim_{\mathbb K} V$ for all finite dimensional subspaces $V$ with $\dim_{\mathbb K} V>N$. To accomplish this, we make use of the fact that $\mathbb KE_X$ is not amenable, which is a consequence of Theorem~2. By Proposition~2.23(i), this means that there is a finite dimensional subspace $U$ of $\mathbb KE_X$ such that $\dim_{\mathbb K}\left (UV\right )\geq 3\dim_{\mathbb K} V$ for all finite dimensional subspaces $V$ of $\mathbb KE_X$. Write $\widehat{P}:={\rm span}_{\mathbb K} P$. Then $\mathbb KE=\widehat{P}\oplus \mathbb KE_X$ as vector spaces.

Set $N:=3|P|$, and take $V$ to be an arbitrary finite dimensional subspace of $\mathbb KE$ that has dimension larger than $N$. Write $V_{\widehat{P}}$ and $V_{\mathbb KE_X}$ for the images of $V$ under the projection maps $\mathbb KE\to \widehat{P}$ and $\mathbb KE\to \mathbb KE_X$, respectively. Then $V\subseteq V_{\widehat{P}}\oplus V_{\mathbb KE_X}$. Moreover, we have the chain of inequalities
\[\dim_{\mathbb K}\left (UV\right )= \dim_{\mathbb K}\left (UV_{\mathbb KE_X}\right )\geq 3\dim_{\mathbb K} V_{\mathbb KE_X}\geq 3\left (\dim_{\mathbb K} V-\dim_{\mathbb K} V_{\widehat{P}}\right )\geq 3\left (\dim_{\mathbb K} V-|P|\right )>2\dim_{\mathbb K} V .\]
Therefore $\mathbb KE$ is not exhaustively amenable. 
\end{proof}

Below we enunciate versions of Theorems 2 and 3 for right amenability, obtained simply by applying Lemma 2.3(i).

\begin{corollary} Let $\mathbb K$ be a field and $E$ a directed graph with $E^0$ finite. Let $X$ be the smallest subset of $E^0$ such that $M(X)=X$ and $X$ contains all the vertices of all the cycles of $E$. Then the statements (i), (ii), (iii), and (iv) below are equivalent.
\begin{enumerate*}
\item $\mathbb KE$ is right  amenable.
\item $\mathbb KE$ satisfies the left strong rank condition.
\item There is no left $\mathbb KE$-module monomorphism $\left (\mathbb KE \right )^2\to \mathbb KE$.
\item $X\neq E^0$, or $E$ contains an exclusive minimal cycle.
\end{enumerate*}
\end{corollary}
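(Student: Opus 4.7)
The plan is to derive this corollary from Theorem 2 applied to the opposite graph $E^{\rm op}$. Since $\mathbb K$ is a field (hence commutative), $\mathbb K = \mathbb K^{\rm op}$, so Lemma 2.3(i) yields a ring isomorphism $\mathbb K[E^{\rm op}] \cong (\mathbb KE)^{\rm op}$. The strategy is to check that each of the four conditions in Theorem 2, formulated for $\mathbb K[E^{\rm op}]$, translates under this isomorphism into the correspondingly numbered condition of the present corollary.

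First I would dispatch the ring-theoretic translations. By definition, $\mathbb KE$ is right amenable iff $(\mathbb KE)^{\rm op}$, i.e., $\mathbb K[E^{\rm op}]$, is amenable. Right modules over $(\mathbb KE)^{\rm op}$ are precisely left modules over $\mathbb KE$, so the (right) strong rank condition for $\mathbb K[E^{\rm op}]$ is precisely the left strong rank condition for $\mathbb KE$; similarly, the nonexistence of a right $\mathbb K[E^{\rm op}]$-module monomorphism $(\mathbb K[E^{\rm op}])^2 \to \mathbb K[E^{\rm op}]$ is the nonexistence of a left $\mathbb KE$-module monomorphism $(\mathbb KE)^2 \to \mathbb KE$. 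This handles the translation of (i), (ii), and (iii) of Theorem 2 to (i), (ii), (iii) here.

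Next I would perform the graph-theoretic translation for condition (iv). The key observation is that the quasiordering $\geq$ on $(E^{\rm op})^0 = E^0$ induced by $E^{\rm op}$ is the reverse of the one induced by $E$, because edge-reversal reverses the notion of ``there is a path from $v$ to $w$.'' Consequently, a subset of $E^0$ is hereditary in the sense of Theorem 2 relative to $E^{\rm op}$ (downward closed under $\geq_{E^{\rm op}}$) if and only if it is upward closed under $\geq_E$, i.e., satisfies $M(X) = X$; so the distinguished set $X$ of Theorem 2 for $E^{\rm op}$ coincides with the set $X$ defined in the corollary. Moreover, cycles in $E^{\rm op}$ correspond bijectively to cycles in $E$ via edge-reversal, with the same vertex sets, so the exclusivity of a cycle is invariant under passage to $E^{\rm op}$; and a cycle that is maximal under the induced cycle-quasiordering in $E^{\rm op}$ is precisely one that is minimal in $E$. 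Thus condition (iv) of Theorem 2 applied to $E^{\rm op}$ reads exactly as ``$X \neq E^0$, or $E$ contains an exclusive minimal cycle,'' which is condition (iv) here.

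Putting these translations together and invoking Theorem 2 for the algebra $\mathbb K[E^{\rm op}]$ then gives the desired equivalence. There is no substantive obstacle: the work is entirely bookkeeping, and the only point requiring a moment's care is the simultaneous reversal of the vertex quasiordering, of the hereditary/$M(X)=X$ distinction, and of the maximal/minimal distinction on cycles, all of which move together consistently under $E \leadsto E^{\rm op}$.
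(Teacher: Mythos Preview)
Your proposal is correct and follows essentially the same approach as the paper, which obtains this corollary ``simply by applying Lemma 2.3(i)'' to translate Theorem~2 to the opposite graph. Your careful bookkeeping of how the quasiordering, the hereditary/$M(X)=X$ condition, and the maximal/minimal distinction all reverse consistently under $E\leadsto E^{\rm op}$ is exactly the content the paper leaves implicit.
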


\begin{corollary}  Let $\mathbb K$ be a field and $E$ a directed graph with $E^0$ finite and nonempty. Let $X$ be the smallest subset of $E^0$ such that $M(X)=X$ and that contains all the vertices of all the cycles of $E$. Then $\mathbb KE$ is exhaustively right amenable if and only if at least one of the conditions (i), (ii), (iii), or (iv) below is satisfied.
\begin{enumerate*}
\item $X=\emptyset$. 
\item $M(E^0\setminus X)$ contains an infinite receiver.
\item $M(E^0\setminus X)$ contains a vertex that belongs to a cycle. 
\item $E$ contains an exclusive minimal cycle.
\end{enumerate*}
\end{corollary}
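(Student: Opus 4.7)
The plan is to deduce Corollary~3.6 from Theorem~3 by passing to the opposite algebra, exactly as Corollary~3.5 is derived from Theorem~2 and Corollary~3.3 from Theorem~1. By definition, $\mathbb KE$ is exhaustively right amenable if and only if $(\mathbb KE)^{\rm op}$ is exhaustively amenable. Since $\mathbb K$ is a field and hence commutative, $\mathbb K^{\rm op}=\mathbb K$, so Lemma~2.3(i) supplies a ring isomorphism $(\mathbb KE)^{\rm op}\cong \mathbb K[E^{\rm op}]$. It therefore suffices to apply Theorem~3 to the graph $E^{\rm op}$ and translate the four resulting conditions back into conditions on $E$.

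The remainder of the argument is a bookkeeping translation through the dictionary between $E$ and $E^{\rm op}$. Because $v \geq_{E^{\rm op}} w$ if and only if $w \geq_E v$, a subset $Y\subseteq E^0$ is hereditary in $E^{\rm op}$ if and only if $M(Y)=Y$ in $E$, and one has $T_{E^{\rm op}}(Y)=M_E(Y)$ for every such $Y$. Consequently, the smallest hereditary subset of $\left(E^{\rm op}\right)^0$ containing all cycle vertices coincides with the set $X$ appearing in Corollary~3.6. Reversing edges also interchanges infinite emitters and infinite receivers, and cycles in $E^{\rm op}$ correspond bijectively to cycles in $E$ traversed in reverse. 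This bijection preserves the property of being exclusive and converts cycles maximal in $E^{\rm op}$ with respect to the induced quasiordering into cycles minimal in $E$. Feeding these identifications into Theorem~3(i)--(iv) applied to $E^{\rm op}$ produces conditions (i)--(iv) of Corollary~3.6 verbatim, and the hypothesis that $E^0$ is finite and nonempty is inherited by $E^{\rm op}$.

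I do not anticipate any substantive obstacle: the entire proof is a change of variables across the correspondence $E\leftrightarrow E^{\rm op}$. The only points deserving a brief check are that ``exclusive'' and ``maximal cycle'' dualize correctly under passage to $E^{\rm op}$ (becoming ``exclusive'' and ``minimal cycle'', respectively), and that the operators $T$ and $M$ swap under this duality; both verifications are immediate from their definitions.
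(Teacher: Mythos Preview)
Your proposal is correct and follows precisely the paper's approach: the paper states that this corollary is obtained from Theorem~3 ``simply by applying Lemma~2.3(i),'' and your argument carries out exactly that dualization, with the added benefit of spelling out explicitly why the operators $T$ and $M$ swap and why maximal cycles become minimal cycles under passage to $E^{\rm op}$.
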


We conclude this section with an example of a graph to which we can easily apply our results. 

\begin{example} {\rm Let $E$ be the graph with $E^0:= \{v_0, v_1, v_2, v_3, v_4, v_5\}$ and edges as shown in the diagram below.
\begin{displaymath}
\xymatrix{
 & \bullet_{v_0} \ar@/^0.5pc/@{->}[d] &  &  & \\
\bullet_{v_1} \ar@{->}[r] \ar@{->}[drr]  & \bullet_{v_2} \ar@{->}[r] \ar@/^0.5pc/@{->}[u] & \bullet_{v_3} \ar@{->}[r] \ar@{->}[ul] & \bullet_{v_4}  \ar@(ul,ur)  \\
 & & \bullet_{v_5} 
}
\end{displaymath}

Let $C:=\{v_0, v_2, v_3, v_4\}$; that is, $C$ is the set of vertices that lie on cycles. Then $T(C)=C$, and $M(C)=C\cup \{v_1\}$. Therefore, for any field $\mathbb K$, the algebra $\mathbb KE$ is exhaustively amenable and exhaustively right amenable by Theorem~3 and Corollary~3.5, respectively. Also, Theorem~1 shows that, for any ring $R$, the ring $RE$ satisfies the strong rank condition if and only if $R$ does. Furthermore, according to Corollary~3.2, the same holds for the left strong rank condition. }
\end{example}

\section{Relative Leavitt path rings} 

In this section, we establish our results about relative Leavitt path rings.  We begin by showing that, when a graph is restricted to a hereditary subset, the relative Leavitt path ring associated to the restricted graph is isomorphic to a nonunital subring of the relative Leavitt path ring arising from the entire graph. This is doubtless well known for Leavitt path algebras over fields, and generalizing it to relative Leavitt path rings with arbitrary coefficients is straightforward.

\begin{lemma} Let $E$ be a directed graph with $E^0$ finite, and let $R$ be a ring. Suppose that $V$ is a subset of ${\rm Reg}(E)$. For any hereditary subset $X$ of $E^0$, there is a multiplicative $R$-bimodule monomorphism $L^{V\cap X}_R(E_X)\to L^V_R(E)$ that restricts to the identity map on $X\cup \left (E_X\right )^1\cup \left (\left (E_X\right )^1\right )^\ast$. 
\end{lemma}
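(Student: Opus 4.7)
The plan is to construct the desired map via the universal property of $L^{V\cap X}_R(E_X)$, routing through a suitable corner of $L^V_R(E)$ to handle the fact that the two rings have different unit elements. First, I would set $e:=\sum_{v\in X} v \in L^V_R(E)$. Since $vw=\delta(v,w)$ for all $v,w\in E^0$, the element $e$ is an idempotent. Form the corner $S:=e\,L^V_R(E)\,e$, which is a unital ring with unit $e$, and carries a natural $R$-ring structure inherited from $L^V_R(E)$ via $r\mapsto re$.

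Next, I would define a map $\phi:L^{V\cap X}_R(E_X)\to S$ sending each generator in $X\cup (E_X)^1\cup ((E_X)^1)^*$ to the same symbol, viewed inside $L^V_R(E)$. The hereditarity of $X$ is the crucial ingredient: every edge $f\in E^1$ with $s(f)\in X$ must have $r(f)\in X$, hence $(E_X)^1=\{f\in E^1:s(f)\in X\}$ and $s^{-1}_{E_X}(v)=s^{-1}_E(v)$ for all $v\in V\cap X$. In particular, every generator lies in $S$. I would then verify that the defining relations (i)--(iv) and (1)--(3) for $L^{V\cap X}_R(E_X)$ are all satisfied by their images in $S$. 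The only nontrivial checks are the unit relation $\sum_{v\in X} v = 1_S$ (true by construction of $S$) and the Leavitt relation $\sum_{f\in s^{-1}_{E_X}(v)} ff^* = v$ for $v\in V\cap X$, which reduces to the corresponding relation in $L^V_R(E)$ by the source-fiber equality above. The universal property of $L^{V\cap X}_R(E_X)$ then furnishes a unital $R$-ring homomorphism $\phi:L^{V\cap X}_R(E_X)\to S$, and composing with the inclusion $S\hookrightarrow L^V_R(E)$ yields a multiplicative $R$-module map that restricts to the identity on the prescribed generating set.

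For injectivity I would invoke Proposition 2.2. For each $v\in V$, pick an edge $e_v\in s^{-1}_E(v)$, arranging matters so that $e_v\in (E_X)^1$ whenever $v\in V\cap X$ — this is possible because $s^{-1}_{E_X}(v)=s^{-1}_E(v)$ for such $v$. Paths in $E_X$ are paths in $E$, so $\Sigma_{E_X}\subseteq \Sigma_E$, and with the matched choice of $e_v$'s, a short argument shows that any element of $\Sigma_{E_X}$ lying in $\Lambda_E$ already lies in $\Lambda_{E_X}$. Hence $\Sigma_{E_X}\setminus \Lambda_{E_X}\subseteq \Sigma_E\setminus \Lambda_E$, and $\phi$ sends the $R$-basis of $L^{V\cap X}_R(E_X)$ given by Proposition 2.2 injectively into the $R$-basis of $L^V_R(E)$ given by the same proposition. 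Therefore $\phi$ is injective.

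The main conceptual obstacle is precisely the mismatch of unit elements: $L^{V\cap X}_R(E_X)$ has unit $\sum_{v\in X} v$ while $L^V_R(E)$ has unit $\sum_{v\in E^0}v$. This blocks any direct application of the universal property with target $L^V_R(E)$. Interpolating the corner $S$, in which $\sum_{v\in X}v$ genuinely serves as the identity, is what allows the universal property to be invoked, while the resulting map is still multiplicative and $R$-linear after composition with the inclusion into $L^V_R(E)$.
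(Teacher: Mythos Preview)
Your proof is correct. The construction of the map is essentially the same as the paper's, though you are more explicit about the unit-element mismatch: the paper simply asserts that ``the defining relations of the two relative Leavitt path rings allow us to define a multiplicative $R$-module homomorphism,'' whereas you route through the corner $eL^V_R(E)e$ to make the universal property fire cleanly. This is a worthwhile clarification.

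The genuine divergence is in the injectivity step. The paper appeals to Theorem~2.4 (Tomforde's graded uniqueness theorem): the map is $\mathbb Z$-graded and sends each $rv$ with $r\neq 0$ to a nonzero element, so it is injective. You instead compare the explicit $R$-bases furnished by Proposition~2.2, showing $\Sigma_{E_X}\setminus\Lambda_{E_X}\subseteq \Sigma_E\setminus\Lambda_E$ under the compatible choice of edges $e_v$. Your route is more elementary---it avoids the graded-uniqueness machinery entirely---at the cost of a short combinatorial verification. The paper's route is a one-line invocation but imports a deeper theorem. Both are perfectly adequate here; your argument has the minor advantage that it would still go through if one were working in a setting where graded uniqueness were unavailable or harder to check.
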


\begin{proof} The defining relations of the two relative Leavitt path rings allow us to define a multiplicative $R$-bimodule homomorphism $L^{V\cap X}_R(E_X)\to L^V_R(E)$ that restricts to the identity map on the set in question. That this map is injective follows from Theorem~2.4. 
\end{proof}

Interesting in its own right, the next proposition describes an instance when the generating number of a restricted relative Leavitt path ring cannot be smaller than the generating number of the entire relative Leavitt path ring. Part (i) of this result generalizes \cite[Lemma~4.3]{Ara}
and is proved in the same manner.  Also, the case of part (ii) where the generating number of the restricted Leavitt path ring is equal to one was proved in \cite[p.~126]{Amenability}. Our argument for the general case is merely an extension of the one in \cite{Amenability}. For the statement of the result and the proof, we require the following concepts and notation.

\begin{definition}{\rm Let $E$ be a directed graph and $V$ a subset of $\text{Reg}(E)$. A set $X\subseteq E^0$ is \emph{$V$-saturated} if, for all $v\in V$, the implication 
\[r\left (s^{-1}(v)\right )\subseteq  X\Longrightarrow v\in X\]
holds.

For any $X\subseteq E^0$, we define the sequence $\left (\Lambda^V_k\left (X\right )\right )_{k=0}^\infty$ of subsets of $E^0$ as follows:
\[\Lambda^V_0\left (X\right ):=X;\ \ \ \Lambda^V_k\left (X\right ):=\Lambda^V_{k-1}\left (X\right )\cup \{v\in V\ :\ r\left (s^{-1}\left (v\right )\right )\subseteq \Lambda^V_{k-1}\left (X\right )\}\ \mbox{for $k\geq 1$}.\]}
\end{definition}

\begin{remark}{\rm The set  $\bigcup_{k=0}^\infty \Lambda^V_k \left (X\right )$ is the smallest $V$-saturated subset of $E^0$ that contains $X$, known as the \emph{$V$-saturated closure} of $X$ (see \cite[Lemma~2.0.7]{LPA}). Moreover, if $X$ is hereditary, then it is plainly the case that  $\bigcup_{k=0}^\infty \Lambda^V_k \left (X\right )$ is hereditary. Hence, for an arbitrary subset $X$ of $E^0$, the set  $\bigcup_{k=0}^\infty \Lambda^V_k \left (T(X)\right )$ is the smallest subset of $E^0$ containing $X$ that is both hereditary and $V$-saturated.
}
\end{remark}

\begin{proposition} Let $E$ be a directed graph with $E^0$ finite, and let $V$ be a subset of ${\rm Reg}(E)$.  Assume that $X$ is a hereditary subset of $E^0$ such that the $V$-saturated closure of $X$ is $E^0$. Then, for any ring $R$, the following two assertions hold.
\begin{enumerate*}
\item  The rings $L^{V\cap X}_R\left (E_X\right )$ and $L^V_R(E)$ are Morita equivalent.
\item The inequality
\[{\rm gn}\left (L^V_R\left (E\right ) \right )\leq {\rm gn}\left (L^{V\cap X}_R\left ( E_X\right ) \right )\]
holds, with equality occurring if one of the generating numbers is infinite. 
\end{enumerate*}
\end{proposition}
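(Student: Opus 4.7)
The plan is to set $L := L^V_R(E)$, $L_X := L^{V\cap X}_R(E_X)$, and $e := \sum_{v \in X} v \in L$, which is clearly an idempotent. I would prove part (i) by establishing two ingredients: that $eLe \cong L_X$ as rings, and that $e$ is a full idempotent in $L$, i.e., $LeL = L$. Part (i) then follows from the classical fact that a ring is Morita equivalent to any of its full corner subrings.

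For the isomorphism $eLe\cong L_X$: the embedding $\phi:L_X\hookrightarrow L$ supplied by Lemma 4.1 has image contained in $eLe$, since each of its generators (a vertex in $X$, or an edge or ghost edge with source and range in $X$) is fixed on both sides by $e$. To show that $\phi$ surjects onto $eLe$, I would use Proposition 2.1 to express each element of $eLe$ as an $R$-linear combination of terms $e\alpha\beta^\ast e$ with $\alpha,\beta\in {\rm Path}(E)$ and $r(\alpha)=r(\beta)$. A direct computation shows that $e\alpha\beta^\ast e$ is nonzero only when $s(\alpha), s(\beta)\in X$, in which case the hereditariness of $X$ forces every vertex of $\alpha$ and $\beta$ to lie in $X$; hence $\alpha,\beta\in {\rm Path}(E_X)$ and $\alpha\beta^\ast\in \phi(L_X)$. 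For fullness, I would show by induction on $k$ that $\Lambda^V_k(X)\subseteq LeL$. The base $k=0$ is immediate. For the inductive step, if $v\in \Lambda^V_k(X)\setminus \Lambda^V_{k-1}(X)$, then $v\in V$ and $r(s^{-1}(v))\subseteq \Lambda^V_{k-1}(X)\subseteq LeL$, so
\[v=\sum_{\varepsilon\in s^{-1}(v)} \varepsilon\, r(\varepsilon)\, \varepsilon^\ast \in L\cdot LeL\cdot L = LeL.\]
Since $E^0$ is the $V$-saturated closure of $X$, every vertex lies in $LeL$; summing them yields $1_L \in LeL$, so $LeL=L$.

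For part (ii), Morita invariance of the rank condition (Proposition 2.18), combined with part (i), immediately yields that ${\rm gn}(L)$ is infinite if and only if ${\rm gn}(L_X)$ is infinite, handling the equality claim. For the inequality in the finite case, suppose ${\rm gn}(L_X)=n<\aleph_0$. By Proposition 2.12(v), for every $m>n$ we have $1_{L_X}^{\oplus m}\precsim 1_{L_X}^{\oplus n}$ over $L_X$. Under $L_X\cong eLe$, the element $1_{L_X}$ corresponds to $e$, so this reads $eI_m\precsim eI_n$ in matrices over $eLe$, and because the witness matrices have entries in $eLe\subseteq L$, the same relation persists in matrices over $L$. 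On the other hand, fullness of $e$ yields $1_L=\sum_{i=1}^k x_i\, e\, y_i$ for some $x_i,y_i\in L$; assembling $a := (x_1,\dots,x_k)$ and $b := (y_1,\dots,y_k)^\top$ gives $1_L = a(eI_k)b$, i.e., $1_L\precsim eI_k$. Lemma 2.9(iii) then yields $I_N\precsim eI_{Nk}$ for every $N$. Choosing $N>n$ and setting $m := Nk$, I obtain
\[I_N\precsim eI_{Nk}=eI_m\precsim eI_n\precsim I_n,\]
where the last step uses $eI_n=I_n(eI_n)I_n$. Hence $I_N\precsim I_n$ for every $N>n$, and Proposition 2.12(v) gives ${\rm gn}(L)\leq n={\rm gn}(L_X)$.

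The main obstacle is the verification of fullness: the $V$-saturation hypothesis enters the proof precisely here through the inductive climb along $\Lambda^V_k(X)$, and without it the corner $eLe$ could fail to be Morita equivalent to $L$. Once fullness is secured, the matrix manipulation transferring the bound on ${\rm gn}$ is fairly mechanical, and the identification $eLe\cong L_X$, while geometrically natural, still requires the hereditariness of $X$ to eliminate those basis terms $\alpha\beta^\ast$ whose paths exit $X$.
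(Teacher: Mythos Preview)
Your proof is correct, and part (i) matches the paper's argument essentially verbatim: the same idempotent $e=\sum_{v\in X}v$, the same identification $eLe\cong L_X$ via Lemma~4.1 together with hereditariness of $X$, and the same induction over $\Lambda^V_k(X)$ for fullness.

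For part (ii) there is a minor but genuine difference in execution. After deducing $m\cdot e\precsim n\cdot e$ for all $m>n$, the paper reruns the saturated-closure induction at the level of the quasiordering $\precsim$: it shows $n\cdot v\precsim n\cdot e$ for each vertex $v$ by writing $v=\sum_{f\in s^{-1}(v)} ff^\ast$ and invoking Lemma~2.9, and then assembles these into $(2n)\cdot 1\precsim n\cdot 1$. You instead exploit fullness abstractly: from $1=\sum_i x_i e y_i$ you read off $1\precsim e I_k$ directly, and then chain $I_N\precsim eI_{Nk}\precsim eI_n\precsim I_n$. Your route is slightly more economical and works for any full idempotent in any ring, whereas the paper's route stays closer to the graph structure; both reach the same conclusion via Proposition~2.12.
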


\begin{proof} Write $L:=L^V_R(E)$ and $L_X:=L^{V\cap X}_R\left (E_X\right )$. Let $\theta: L_X\to L$ be the map described in Lemma 4.1. Let $\epsilon:=\theta\left (1_{L_X}\right )=\sum_{x\in X} x$. Plainly, we have ${\rm Im}\ \theta\subseteq \epsilon L\epsilon$. We claim that the reverse containment also holds. To show this, we will establish that $\epsilon \alpha\beta^\ast\epsilon\in {\rm Im}\ \theta$ for all $\alpha, \beta\in {\rm Path}(E)$. If $s(\alpha)\notin X$ or $s(\beta)\notin X$, then $\epsilon \alpha\beta^\ast\epsilon=0$. Suppose that $s(\alpha), s(\beta)\in X$. Then $\alpha, \beta\in {\rm Im}\ \theta$, which means  $\epsilon \alpha\beta^\ast\epsilon\in {\rm Im}\ \theta$. Therefore we have ${\rm Im}\ \theta= \epsilon L\epsilon$, so that $L_X\cong \epsilon L\epsilon$. 

Statement (i) will now follow if we can verify that the idempotent $\epsilon$ is full in $L$, that is, that $L\epsilon L=L$. This will follow if we can show that $\Lambda^V_k(X)\subseteq L\epsilon L$ for every $k\geq 0$. We prove this by induction on $k$. The case $k=0$ is obviously true. Assume that $k>0$ and let $v\in \Lambda^V_k(X)$. If $v\in \Lambda^V_{k-1}(X)$, then $v\in L\epsilon L$ by the inductive hypothesis. Assume that $v\notin \Lambda^V_{k-1}(X)$. In this case, $v$ must be regular. Let $e_1,\dots,e_n$ be all the distinct edges that emanate from $v$.  Then $r(e_i)\in \Lambda^V_{k-1}(X)\subseteq L\epsilon L$ for $i=1,\dots, n$. Thus $e_i=e_ir(e_i)\in L\epsilon L$ for $i=1,\dots, n$. Hence $v=\sum_{i=1}^ne_ie_i^\ast\in L\epsilon L$, which completes the argument. 

We now turn to the task of showing statement (ii). The fact that the two generating numbers coincide if one of them is infinite follows immediately from (i) and Proposition~2.18. 
By the same token, if one generating number is finite, then the other is as well. 
Suppose, then, that the two generating numbers are finite and put $n:={\rm gn}\left (L_X\right )$.  For convenience, we will write $i\cdot z:=\underbrace{z\oplus\cdots \oplus z}_i$ for every $z\in L$ and positive integer $i$. Using this notation, we conclude from Proposition 2.12(v) that $m\cdot \epsilon\precsim n\cdot \epsilon$ for every $m>n$. 

We will show that $n\cdot v\precsim n\cdot \epsilon$ for every $v\in E^0$. This will follow if we establish that, for every $k\geq 0$, $n\cdot v\precsim n\cdot \epsilon$ for all $v\in \Lambda^V_k(X)$. We prove this by induction on $k$, the base case being a consequence of parts (ii), (iii), and (v) of Lemma 2.9. Assume that $k>0$ and let $v\in \Lambda^V_k(X)$. If $v\in \Lambda^V_{k-1}(X)$, then $n\cdot v\precsim n\cdot \epsilon$. Suppose that $v\notin \Lambda^V_{k-1}(X)$. Let $e_1,\dots,e_l$ be all the distinct edges that start at $v$. Then $r(e_i)\in \Lambda^V_{k-1}(X)$ for $1\leq i\leq  l$. Hence $n\cdot r(e_i)\precsim n\cdot \epsilon$ for $1\leq i\leq l$. Applying the relevant parts of Lemma 2.9, we deduce the chain of relations
\[n\cdot v=n\cdot \left (\sum_{i=1}^l e_ie_i^\ast\right )\precsim n\cdot \left (\bigoplus_{i=1}^l e_ie_i^\ast\right )\precsim n\cdot \left (\bigoplus_{i=1}^l e_i^\ast e_i\right )\precsim \mathlarger{\mathlarger{\bigoplus}}_{i=1}^l n\cdot r(e_i)\precsim (nl)\cdot \epsilon\precsim n\cdot \epsilon.\] 

Knowing that $n\cdot v\precsim n\cdot \epsilon$ for every $v\in E^0$ allows us to finish the proof. Since the unit element in $L$ is $1=\sum_{v\in E^0} v$, we observe that
\[(2n)\cdot 1\precsim \mathlarger{\mathlarger{\bigoplus}}_{v\in E^0} (2n)\cdot v\precsim (2n|E_0|)\cdot \epsilon\precsim  n\cdot \epsilon\precsim n\cdot \left (\epsilon \oplus \bigoplus_{v\in E_0\setminus X} v\right )\precsim n\cdot 1.\]
It follows, then, from Proposition 2.12(iv) that ${\rm gn}\left (L \right )\leq n$. 
\end{proof}

Next we examine the relative Leavitt path ring of a quotient graph, defined as follows. 

\begin{definition}{\rm Let $E$ be a directed graph and $X$ a subset of $E^0$. The {\it quotient graph} $E/X$ is defined to be the restriction of $E$ to $E^0\setminus X$.}
\end{definition}

If the set $X\subseteq E^0$ is $V$-saturated, then $L_R^{V\setminus X}(E/X)$ is isomorphic to the quotient of $L_R^V(E)$ by the ideal generated by $X$. 

\begin{lemma} Let $R$ be a ring and $E$ a directed graph with finitely many vertices. Suppose that $V$ is a set of regular vertices of $E$, and that $X$ is a $V$-saturated subset of $E^0$. Furthermore, let $I(X)$ be the ideal in $L_R^V(E)$ that is generated by X. Then
\[L^{V\setminus X}_R(E/X)\cong L^V_R(E)/I(X)\]
as $R$-rings. 
\end{lemma}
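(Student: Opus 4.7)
The plan is to exhibit mutually inverse $R$-ring homomorphisms between the two rings, using the universal property of the relative Leavitt path ring on each side.

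First I would define $\phi: L^{V\setminus X}_R(E/X)\to L_R^V(E)/I(X)$ by sending each generator $v\in (E/X)^0$, $e\in (E/X)^1$, or $e^\ast\in ((E/X)^1)^\ast$ to the coset of the corresponding element of $L_R^V(E)$. Most defining relations transfer trivially; the two that require checking are the unit relation and the relative Cuntz--Krieger relation. For the unit, $\sum_{v\in E^0\setminus X}v\equiv 1\pmod{I(X)}$ since $\sum_{v\in X}v\in I(X)$. For the relation $\sum_{e\in s^{-1}_{E/X}(v)}ee^\ast=v$ with $v\in V\setminus X$, I first need $v$ to be regular in $E/X$; this is exactly where $V$-saturation enters, because $s^{-1}_{E/X}(v)=\emptyset$ would force $r(s^{-1}_E(v))\subseteq X$ and hence $v\in X$, a contradiction. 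Given this, the relation $\sum_{e\in s^{-1}_E(v)}ee^\ast=v$ in $L_R^V(E)$ and the observation that each discarded term $ee^\ast=e\cdot r(e)\cdot e^\ast$ with $r(e)\in X$ lies in $I(X)$ yield the required identity modulo $I(X)$.

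Next I would construct the inverse by first defining a multiplicative and additive $R$-module map $\widetilde\psi:L_R^V(E)\to L^{V\setminus X}_R(E/X)$ that sends each $v\in X$ to $0$, each $v\in E^0\setminus X$ to itself, and each edge $e$ or ghost edge $e^\ast$ to itself if $s(e),r(e)\in E^0\setminus X$ and to $0$ otherwise. The unit relation becomes $\sum_{v\in E^0\setminus X}v=1$, which holds in $L^{V\setminus X}_R(E/X)$. For $v\in V$, I split into cases: if $v\in X$ every edge with source $v$ maps to $0$ and $\widetilde\psi(v)=0$, while if $v\in V\setminus X$ the surviving terms are precisely $\{ee^\ast: e\in s^{-1}_{E/X}(v)\}$ and $V$-saturation again guarantees $v\in\mathrm{Reg}(E/X)$ so the relative Cuntz--Krieger relation in $L^{V\setminus X}_R(E/X)$ gives the desired equality. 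Since $\widetilde\psi$ annihilates every $v\in X$, it vanishes on the two-sided ideal $I(X)$ and descends to $\psi:L_R^V(E)/I(X)\to L^{V\setminus X}_R(E/X)$.

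A direct check on generators shows $\phi\circ\psi$ and $\psi\circ\phi$ are the identity, so both maps are $R$-ring isomorphisms. The only genuinely nontrivial step is verifying that the relative Cuntz--Krieger relations survive in both directions, and the point where $V$-saturation is used is precisely the assertion that $V\setminus X\subseteq \mathrm{Reg}(E/X)$; everything else is formal bookkeeping. An alternative would be to establish surjectivity of $\phi$ directly (the image clearly contains all generators since any generator involving a vertex of $X$ lies in $I(X)$) and then invoke Theorem~2.4 to obtain injectivity, but constructing $\psi$ explicitly seems cleaner and avoids having to verify the graded hypothesis.
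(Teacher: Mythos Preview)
Your proof is correct and follows essentially the same approach as the paper: construct mutually inverse $R$-ring homomorphisms using the universal property on each side, then check they compose to the identity on generators. Your $\phi$ and $\psi$ correspond to the paper's $\theta$ and $\psi$, respectively (the paper starts from the other side), and you supply more detail than the paper on where $V$-saturation is invoked, namely in ensuring $V\setminus X\subseteq\mathrm{Reg}(E/X)$ so that the Cuntz--Krieger relations are preserved.
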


\begin{proof} The defining relations for relative Leavitt path rings ensure that there is an $R$-ring epimorphism $\phi:L^V_R(E)\to L^{V\setminus X}_R(E/X)$ with the following properties.
\begin{itemize}
\item $\phi(v)=0$ if $v\in X$.
\item$\phi(v)=v$ if $v\in E^0\setminus X$.
\item  If $e\in E^1$ such that $s(e)\in X$ or $r(e)\in X$, then
$\phi(e)=\phi(e^\ast)=0$.
\item If $e\in E^1$ with $s(e)\notin X$ and $r(e)\notin X$, then $\phi(e)=e$ and $\phi(e^\ast)=e^\ast$.
\end{itemize}

Since $I(X)\subseteq {\rm Ker}\ \phi$, the $R$-ring epimorphism $\phi$ induces an $R$-ring epimorphism $\psi:L^V_R(E)/I(X)\to L^{V\setminus X}_R(E/X)$. We will show that $\psi$ is an isomorphism by constructing an inverse. Observe that the defining relations for $L^V_R(E)$ and $L^{V\setminus X}_R(E/X)$ as well as the definition of $I(X)$ yield the existence of an $R$-ring homomorphism $\theta:L^{V\setminus X}_R(E/X)\to L^V_R(E)/I(X)$
such that  the following properties hold: $\theta(v)=v$ for $v\in (E/X)^0$; $\theta(e)=e$ and $\theta(e^\ast)=e^\ast$ for $e\in (E/X)^1$. Then $\psi\theta(y)=y$ for all $y\in L^{V\setminus X}_R(E/X)$, and $\theta\psi(y+I(X))=y+I(X)$ for all $y\in L^V_R(E)$. 
\end{proof}

\begin{remark}{\rm Lemma 4.5 is essentially a special case of \cite[Theorem 2.4.12]{LPA}, although the latter result was stated only for Leavitt path algebras over fields.}
\end{remark}

In the case where $X$ is a hereditary subset of $E^0$, the ideal of $L_R^V(E)$ that is generated by $X$ admits the following description. 

\begin{lemma}[{\cite[Lemma 2.4.1]{LPA}}] Let $E$ be a directed graph with $E^0$ finite. Let $V$ and $X$ be subsets of $E^0$ such that $V$ contains only regular vertices and $X$ is hereditary. Then the ideal of $L_R^V(E)$ that is generated by $X$ is generated as an $R$-module by the set
\[\{\alpha\beta^\ast\ :\ \alpha, \beta\in {\rm Path}(E)\ \mbox{\rm and}\ r(\alpha)=r(\beta)\in X\}.\]
\end{lemma}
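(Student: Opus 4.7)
The plan is to verify the standard two-sided inclusion. Let $I$ denote the ideal of $L_R^V(E)$ generated by $X$, and let $J$ denote the $R$-submodule generated by the proposed spanning set $\{\alpha\beta^\ast : \alpha,\beta\in\mathrm{Path}(E),\ r(\alpha)=r(\beta)\in X\}$. The containment $J\subseteq I$ is immediate: for $\alpha,\beta$ with $r(\alpha)=r(\beta)=v\in X$ we have $\alpha\beta^\ast=\alpha v\beta^\ast\in I$, and $I$ is closed under $R$-linear combinations. Noting that each $v\in X$ can be written as $vv^\ast\in J$, the task reduces to showing that $J$ is already a two-sided ideal of $L_R^V(E)$.

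Since $L_R^V(E)$ is generated as an $R$-module by $\Sigma_E=\{\gamma\delta^\ast : r(\gamma)=r(\delta)\}$ (by Proposition~2.1 together with the observation immediately following it), it suffices to show that for each $\gamma\delta^\ast\in\Sigma_E$ and each generator $\alpha\beta^\ast$ of $J$, both products $(\gamma\delta^\ast)(\alpha\beta^\ast)$ and $(\alpha\beta^\ast)(\gamma\delta^\ast)$ lie in $J$. The computation is carried out by casework using the multiplication rule~(2.1), which always produces either $0$ or an element of the form $\alpha'(\beta')^\ast$.

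The main point, and the only place where the hereditary hypothesis is used, is the verification that in each nonzero case the terminal vertex of the resulting path lies in $X$. For instance, in the case $\delta=\alpha\delta'$ of the left multiplication, the product equals $\gamma(\beta\delta')^\ast$, and since $s(\delta')=r(\alpha)\in X$, the hereditary property forces every vertex along $\delta'$, including $r(\delta')=r(\beta\delta')=r(\gamma)$, to belong to $X$. An analogous use of hereditariness handles the case $\gamma=\beta\gamma'$ of the right multiplication; the remaining nonzero cases need only the given conditions $r(\alpha)=r(\beta)\in X$. Putting these together shows $J$ absorbs left and right multiplication by $\Sigma_E$, hence by all of $L_R^V(E)$, so $I\subseteq J$.

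I expect no genuine obstacle here; the only subtlety is bookkeeping through the four possible products of the form $(\gamma\delta^\ast)(\alpha\beta^\ast)$ and $(\alpha\beta^\ast)(\gamma\delta^\ast)$ and noticing that hereditariness of $X$ is exactly what is needed in the two cases where a strictly new terminal vertex appears.
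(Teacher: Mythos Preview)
Your proof is correct and follows essentially the same approach as the paper's: both show the $R$-submodule $J$ (the paper's $M$) is a two-sided ideal by invoking the multiplication rule (2.1), then conclude from $X\subseteq J\subseteq I$. You spell out the casework and the precise role of hereditariness, whereas the paper simply asserts that the ideal property ``follows immediately from the relations (2.1).''
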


\begin{proof} Let $M$ be the $R$-module generated by the set in question, and let $I$ be the ideal of $L_R^V(E)$ that is generated by $X$. It follows immediately from the relations (2.1) that $M$ is an ideal in $L^V_R(E)$. Moreover, it is plain that $X\subseteq M$, and that $M\subseteq I$. Therefore $M=I$.
\end{proof}

The quotient described in Lemma 4.5  is a direct summand when $X$ is a hereditary set that contains all the vertices of cycles and $E^0\setminus X$ fails to have any infinite emitters. This was proved for coefficient rings that are fields in \cite{Amenability}, and we use the ideas from there to construct a proof for arbitrary coefficient rings. 

\begin{lemma}[{Ara, Li, Lled\'o, Wu \cite[p. 294]{Amenability}}] Let $R$ be a ring and $E$ be a directed graph with $E^0$ finite. Let $V$ be a subset of ${\rm Reg}(E)$, and take $X$ to be a hereditary and $V$-saturated subset of $E^0$ such that $X$ contains all the vertices that belong to cycles and $E^0\setminus X$ has no infinite emitters. Let $I(X)$ be the ideal in $L_R^V(E)$ that is generated by $X$. Then
$I(X)$ has a unit element, and  
\[L^V_R(E)\cong L^{V\setminus X}_R(E/X)\oplus I(X)\]
as $R$-rings.
\end{lemma}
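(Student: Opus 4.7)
The plan is to exhibit a central idempotent $\epsilon$ in $L := L^V_R(E)$ that serves as a unit element for $I(X)$; the desired direct sum decomposition will then fall out by combining with Lemma 4.5. Let $\mathcal{F}$ be the set of finite paths $\alpha = e_1 \cdots e_n$ in $E$ with $s(e_i) \in E^0 \setminus X$ for every $i$ and $r(e_n) \in X$; these are the paths that enter $X$ for the first time. Since $X$ absorbs every vertex lying on a cycle, the induced subgraph on $E^0 \setminus X$ is acyclic, and, by hypothesis, has finite vertex set with no infinite emitters; hence $\mathcal{F}$ is finite. I would then set
\[\epsilon := \sum_{v \in X} v \;+\; \sum_{\alpha \in \mathcal{F}} \alpha \alpha^*.\]

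First I would verify that the summands in the definition of $\epsilon$ are pairwise orthogonal idempotents. The diagonal identities $v^2 = v$ and $(\alpha\alpha^*)^2 = \alpha\, r(\alpha)\, \alpha^* = \alpha\alpha^*$ are clear; orthogonality between $v \in X$ and any $\alpha\alpha^*$ is immediate because $s(\alpha) \notin X$. For distinct $\alpha, \beta \in \mathcal{F}$, the multiplication rule (2.1) forces $\alpha^* \beta = 0$, because otherwise one of $\alpha, \beta$ would be a proper prefix of the other, forcing the common endpoint vertex (already in $X$) to appear as an internal vertex of the longer path, contradicting the defining condition of $\mathcal{F}$.

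The heart of the proof is verifying that $\epsilon$ is a two-sided identity for $I(X)$. By Lemma 4.6 it suffices to show $\epsilon \cdot \alpha\beta^* = \alpha\beta^* = \alpha\beta^* \cdot \epsilon$ whenever $r(\alpha) = r(\beta) \in X$, and I would split into cases on $s(\alpha)$. When $s(\alpha) \in X$, the vertex idempotent $s(\alpha)$ is a summand of $\epsilon$ and produces $\alpha\beta^*$, while no path summand contributes because $s(\gamma) \notin X$ for every $\gamma \in \mathcal{F}$. When $s(\alpha) \notin X$, the path $\alpha$ must traverse from $E^0 \setminus X$ into $X$, and hereditariness ensures that once a path enters $X$ it cannot leave; hence $\alpha$ admits a unique factorization $\alpha = \gamma \alpha'$ with $\gamma \in \mathcal{F}$ the shortest prefix reaching $X$. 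Then $\gamma\gamma^* \alpha = \gamma \alpha' = \alpha$, all other summands of $\epsilon$ vanish on $\alpha$ by the orthogonality already established, and right-multiplication by $\beta^*$ preserves the identity. The dual computation handles $\alpha\beta^* \cdot \epsilon$, using the unique first-entry prefix of $\beta$.

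Once $\epsilon$ is shown to be a unit for the ideal $I(X)$, centrality in $L$ will be automatic: for any $r \in L$, both $\epsilon r$ and $r \epsilon$ lie in $I(X)$, so $\epsilon r = \epsilon r \epsilon = r \epsilon$. Therefore $L$ decomposes as the internal ring direct sum $L\epsilon \oplus L(1-\epsilon)$; the inclusion $L\epsilon \subseteq I(X)$ is clear from $\epsilon \in I(X)$, and the reverse $I(X) = \epsilon I(X) \subseteq L\epsilon$ follows from the unit property, so $L\epsilon = I(X)$. Lemma 4.5 then identifies $L(1-\epsilon) \cong L/I(X)$ with $L^{V \setminus X}_R(E/X)$, yielding the stated isomorphism. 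The main obstacle, to my mind, is juggling the three hypotheses on $X$ so that $\mathcal{F}$ is simultaneously finite (using no cycles outside $X$ and no infinite emitters outside $X$) and yields a uniquely defined first-entry decomposition (using hereditariness), both of which are needed in the case analysis of the unit property.
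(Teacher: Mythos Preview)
Your proposal is correct and essentially follows the paper's proof: your idempotent $\epsilon$ coincides with the paper's element $u=\sum_{v\in E^0}\sum_{\mu\in P(v,X)}\mu\mu^\ast$, since for $v\in X$ one has $P(v,X)=\{v\}$ and for $v\notin X$ the set $P(v,X)$ consists precisely of the first-entry paths in your $\mathcal F$ with source $v$. The only organisational difference is that the paper verifies centrality of $u$ directly by checking commutation with each edge, whereas you first establish the two-sided unit property for $I(X)$ and then deduce centrality via the slick observation $\epsilon r=(\epsilon r)\epsilon=\epsilon r\epsilon=\epsilon(r\epsilon)=r\epsilon$; this saves the edge-by-edge computation but is otherwise the same argument.
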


\begin{proof} For every $v\in E^0$, define $P(v,X)$ to be the set of all the paths from $v$ to $X$ whose nonterminal vertices all lie outside of $X$. 
Because $E^0\setminus X$ has no infinite emitters, we know that each of these sets must be finite. 
Also, for every $x\in X$, we have $P(x,X):=\{x\}$. Notice too  that, for any $v\in E^0$ and $\mu,\lambda\in P(v,X)$, we have $\mu^\ast \lambda\neq 0$ if and only if $\mu=\lambda$. 
This observation follows immediately from the relations (2.1). 

Now write
\[u:=\mathlarger{\sum}_{v\in E^0}\sum_{\mu\in P(v,X)} \mu \mu^\ast.\]
It is easy to see that $u\in I(X)$. We claim further that $u$ is central in $L^V_R(E)$. Since $\mu\mu^\ast$ commutes with every vertex, the same is true for $u$.  
To show that $u$ also commutes with every edge, let $e$ be an edge with $s(e)=v$ and $r(e)=w$. If $v=w$, then $v\in X$ and so $ue=e=eu$. Suppose that $v\neq w$. Then
\[ue=\sum_{\mu\in P(v,X)}\mu\mu^\ast e = \sum_{\nu\in P(w,X)} e\nu\left (e\nu\right )^\ast e = \sum_{\nu\in P(w,X)} e\nu \nu^\ast = eu.\]
It follows that $u$ also commutes with $e^\ast$. 
Therefore $u$ is indeed central in $L^V_R(E)$.

We also maintain that $u$ is a unit element for $I(X)$. To verify this, let $\alpha, \beta$ be paths such that $r(\alpha)=r(\beta)\in X$. Then $\alpha=\alpha^\prime \alpha^{\prime\prime}$, where $\alpha'\in P(s(\alpha),X)$. Writing $v:=s(\alpha)$, we have
\[u\alpha\beta^\ast=\sum_{\mu\in P(v,X)} \mu\mu^\ast\alpha\beta^\ast = \sum_{\mu\in P(v, X)} \mu\mu^\ast \alpha^\prime \alpha^{\prime\prime}\beta^\ast.\]
Furthermore, for every $\mu\in P(v,X)$, $\mu^\ast \alpha^\prime\neq 0$ if and only if $\mu=\alpha^\prime$, in which case $\mu^\ast\alpha^\prime= r(\alpha^\prime)$. Thus $u \alpha\beta^\ast=\alpha^\prime \alpha^{\prime\prime}\beta^\ast=\alpha\beta^\ast$. Invoking Lemma 4.6, we conclude that $u$ is a unit element  for $I(X)$.  We therefore have $L^V_R(E)\cong I(X)\oplus (1-u)L^V_R(E)$ as $R$-rings. An appeal to Lemma 4.5, then, furnishes the desired conclusion. 
\end{proof}

Now we prove our main results about relative Leavitt path rings, which have Theorems 4 and 5 from the introduction as special cases. 

\begin{theorem}  Let $R$ be a ring and E a directed graph with finitely many vertices. Let $V$ be a subset of ${\rm Reg}(E)$. Define $X$ to be the smallest hereditary and $V$-saturated subset of $E^0$ that contains all the vertices of all the cycles of $E$. Then the statements (i), (ii), and (iii) below are equivalent. 
\begin{enumerate*}
\item $L^V_R(E)$ satisfies the strong rank condition.
\item $R$ satisfies the strong rank condition, and there is no $L^V_R(E)$-module monomorphism $L^V_R(E)\oplus L^V_R(E)\to L^V_R(E)$. 
\item $R$ satisfies the strong rank condition, and at least one of the statements (1) or (2) holds:
\begin{enumeratenum}
\item $X\neq E^0$;
\item $E$ contains an exclusive maximal cycle.
\end{enumeratenum}
\end{enumerate*}
\end{theorem}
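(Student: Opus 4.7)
The plan is to establish the cyclic chain (i)~$\Rightarrow$~(ii)~$\Rightarrow$~(iii)~$\Rightarrow$~(i). The first implication is immediate: Corollary~2.17 forces $R$ to inherit the strong rank condition from $L^V_R(E)$, while the non-existence of an $L^V_R(E)$-module monomorphism $L^V_R(E) \oplus L^V_R(E) \to L^V_R(E)$ is part of the definition of the strong rank condition.

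For (ii)~$\Rightarrow$~(iii), I would argue the contrapositive. Suppose $X = E^0$ and every maximal cycle of $E$ is nonexclusive. Following the construction in the proof of Theorem~1, I would pick distinct maximal cycles $\omega_1, \xi_1, \dots, \omega_n, \xi_n$ with $\omega_i$ and $\xi_i$ sharing a basepoint $v_i$ and dominating every cycle of $E$, fix paths $\mu_{ij}$ from $v_i$ to each vertex $v_{ij}$ in the hereditary closure $T$ of $\{v_1,\dots,v_n\}$, and set $a := \sum_{i,j}\omega_i^j\xi_i\mu_{ij}$ and $b := \sum_{i,j}\xi_i^j\omega_i\mu_{ij}$. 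I would then argue that $(x,y) \mapsto ax + by$ is an $L^V_R(E)$-module monomorphism. The verification calls for lifting Theorem~1's disjointness argument from paths to the basis $\Sigma_E \setminus \Lambda$ of Proposition~2.2: the analogues $\mathcal{A}^{(\gamma,\delta)}_i$ and $\mathcal{B}^{(\gamma,\delta)}_i$ of the sets used in Theorem~1, now indexed by basis elements $\gamma\delta^\ast$, should remain pairwise disjoint under the multiplication rule~(2.1), forcing all coefficients of $x$ and $y$ to vanish.

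For (iii)~$\Rightarrow$~(i), assume $R$ satisfies the strong rank condition and at least one of (1),~(2) holds. In case~(1) with $E^0 \setminus X$ free of infinite emitters, Lemma~4.8 yields $L^V_R(E) \cong L^{V\setminus X}_R(E/X) \oplus I(X)$ as $R$-rings. The graph $E/X$ has no cycles and no infinite emitters, hence only finitely many paths, so $L^{V\setminus X}_R(E/X)$ is a finitely generated free $R$-module and satisfies the strong rank condition by Lemma~2.13. Lemma~2.8 then transmits the property to $L^V_R(E)$. If $E^0 \setminus X$ does contain an infinite emitter, my plan is to enlarge $X$ to the smallest hereditary and $V$-saturated superset $Y$ that contains all infinite emitters of $E$; if $Y \neq E^0$ the previous argument applies with $Y$ in place of $X$, and if $Y = E^0$ I would instead apply Proposition~2.20 with the finite set of basis elements coming from paths terminating at a maximal infinite emitter of $E^0 \setminus X$ (finite by the argument of Theorem~1's proof). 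In case~(2), let $\omega$ be the exclusive maximal cycle with basepoint $v$; as in the final three paragraphs of Theorem~1's proof, paths terminating at $v$ that do not contain $\omega$ form a finite set $P$ of proper subpaths of $\omega$, and for suitably chosen $l$ the set $F := \{\gamma\omega^i : \gamma \in P,\ 1 \leq i \leq l\}$ satisfies $|KF \setminus \{0\}| < p|F|$, so that Proposition~2.20 delivers the conclusion.

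The main obstacle will be applying Proposition~2.20 in the Leavitt setting, where the basis $\Sigma_E \setminus \Lambda$ is not closed under ring multiplication: by the rule~(2.1), a product $(\alpha\beta^\ast)(\gamma\delta^\ast)$ can take the form $\alpha(\beta')^\ast\delta^\ast$, which may lie in $\Lambda$ and hence expand into several basis elements after rewriting via Proposition~2.2. Verifying both $KF \setminus \{0\} \subseteq \Sigma_E \setminus \Lambda$ and the F{\o}lner-type bound $|KF \setminus \{0\}| < p|F|$ will require a detailed case analysis of~(2.1), possibly enlarging $F$ to absorb ghost-path contributions from $K$. A related difficulty arises in the (ii)~$\Rightarrow$~(iii) step, where the shift from Theorem~1's purely hereditary closure to the hereditary-and-$V$-saturated closure means not every vertex of $E$ is path-reachable from a cycle basepoint; confining the construction of $a$ and $b$ to the hereditary closure $T$ should bypass this, as the desired map $L^V_R(E)^2 \to L^V_R(E)$ need only be injective, not surjective.
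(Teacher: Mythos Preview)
Your (ii)$\Rightarrow$(iii) step has a genuine gap, and the paper takes an entirely different route there. Your proposed fix---restricting the construction of $a$ and $b$ to the hereditary closure $T=T(\{v_1,\dots,v_n\})=T(C)$---does \emph{not} preserve injectivity: in the Leavitt setting, $X=E^0$ means only that the $V$-saturated closure of $T(C)$ equals $E^0$, so there can be vertices $w\notin T(C)$. For any basis element $\gamma\delta^\ast$ with $s(\gamma)=w$, every summand $\omega_i^j\xi_i\mu_{ij}\gamma$ of $a\gamma$ vanishes (no path from $v_i$ terminates at $w$), and likewise for $b$. Hence $(x,y)\mapsto ax+by$ kills $(\gamma\delta^\ast,0)$ and is not a monomorphism. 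The paper avoids this entirely: it exploits the ghost edges, using the matrix identity
\[
\begin{pmatrix}\omega_i^\ast\\ \xi_i^\ast\end{pmatrix}\begin{pmatrix}\omega_i & \xi_i\end{pmatrix}=\begin{pmatrix}v_i & 0\\ 0 & v_i\end{pmatrix}
\]
to obtain $v_i\oplus v_i\precsim v_i$, and then applies Proposition~4.3(ii) (which compares generating numbers across the $V$-saturated closure) to conclude ${\rm gn}(L)=1$. That inequality yields a split epimorphism $L\to L^2$ and hence a monomorphism $L^2\to L$, contradicting~(ii). This is both simpler and structurally different from adapting the path-ring argument.

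Two smaller issues in your (iii)$\Rightarrow$(i). First, in case~(2) your assertion that the paths terminating at $v$ and not containing $\omega$ are exactly the proper subpaths of $\omega$ relies on $M(v)={\rm Vert}(\omega)$; that equality is proved in Theorem~1 from the assumption that the \emph{hereditary} closure of the cycle vertices is $E^0$, but here $X$ is the hereditary \emph{and $V$-saturated} closure, so $M(v)$ may properly contain ${\rm Vert}(\omega)$. The paper instead shows that $E_{M(v)}$ has $\omega$ as its unique cycle and contains no infinite emitters, which still forces the set $P$ to be finite---just not equal to the set of proper subpaths of $\omega$. Second, you must choose $F$ so that each path $\gamma\omega^k$ in it is longer than every $\beta_i$ appearing in $K=\{\alpha_i\beta_i^\ast\}$; otherwise the product $(\alpha_i\beta_i^\ast)(\gamma\omega^k)$ may land outside ${\rm Path}(E)$. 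The paper does this by taking $N_1|\omega|\ge\max\{|\alpha_i|,|\beta_i|\}$ and letting $k$ range over $(N_1,N_2]$, so that every nonzero product is again a path and hence in the basis~$\Sigma$. Your sketch of the infinite-emitter subcase is close, but the paper's version similarly depends on choosing the edge $e\in s^{-1}(y)$ so that $e$ appears in no ghost component $\beta$ of any $\alpha\beta^\ast\in K$.
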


\begin{theorem}  Let $R$ be a ring and E a directed graph with finitely many vertices. Let $V$ be a subset of ${\rm Reg}(E)$. Define $X$ to be the smallest hereditary and $V$-saturated subset of $E^0$ that contains all the vertices of all the cycles of $E$. Then the statements (i), (ii), and (iii) below are equivalent. 
\begin{enumerate*}
\item $L^V_R(E)$ satisfies the rank condition.
\item $R$ satisfies the rank condition, and ${\rm gn}\left (L^V_R\left (E\right )\right )>1$. 
\item $R$ satisfies the rank condition, and at least one of the statements (1) or (2) holds:
\begin{enumeratenum}
\item $X\neq E^0$;
\item $E$ contains an exclusive maximal cycle.
\end{enumeratenum}
\end{enumerate*}
\end{theorem}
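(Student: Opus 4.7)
The plan is to follow the three-step pattern of the proof of Theorem 4.9, with the rank condition replacing the strong rank condition and epimorphisms $L^V_R(E)\to L^V_R(E)\oplus L^V_R(E)$ playing the role previously taken by monomorphisms $L^V_R(E)\oplus L^V_R(E)\to L^V_R(E)$.

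For (i)$\Rightarrow$(ii), the inclusion $R\hookrightarrow L^V_R(E)$, $r\mapsto r\cdot 1_{L^V_R(E)}$, is a ring homomorphism, so Lemma 2.7 forces $R$ to inherit the rank condition from $L^V_R(E)$. The inequality $\text{gn}(L^V_R(E))>1$ is immediate from Definition 2.11, since the generating number of a ring satisfying the rank condition is $\aleph_0$.

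For (ii)$\Rightarrow$(iii), I argue the contrapositive. Assume $R$ satisfies the rank condition, $X=E^0$, and every maximal cycle of $E$ is nonexclusive. As in the proof of Theorem 1, I select, for $i=1,\dots,n$, a pair of distinct maximal cycles $\omega_i,\xi_i$ based at $v_i$, together with paths $\mu_{ij}$ from $v_i$ that cover all of $E^0$. The Leavitt-type relations $\omega_i^*\omega_i=v_i=\xi_i^*\xi_i$ together with $\omega_i^*\xi_i=0$ (since distinct cycles sharing a basepoint must diverge at their first differing edge) supply two orthogonal subidempotents $\omega_i\omega_i^*$ and $\xi_i\xi_i^*$ of $v_i$, each Murray--von Neumann equivalent to $v_i$; parts (iii) and (v) of Lemma 2.9 then give $v_i\oplus v_i\precsim v_i$, so every $v_i$ is properly infinite. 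Since each vertex $w\in E^0$ satisfies $w\precsim v_{i(w)}$ through the path $\mu_{i(w),j(w)}$ terminating at $w$, chaining parts (iii), (iv), and (v) of Lemma 2.9 through the decomposition $2\cdot 1=\sum_w 2w$ yields $2\cdot 1\precsim 1$ in $L^V_R(E)$, so Proposition 2.12(iv) delivers $\text{gn}(L^V_R(E))=1$.

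For (iii)$\Rightarrow$(i), I split into two cases. If (1) holds, I pick a vertex $v$ minimal in the finite nonempty set $E^0\setminus X$ with respect to the quasiordering. Since $X$ contains every cycle vertex, $v$ is not on a cycle, and minimality forces every edge leaving $v$ to have range in $X$; moreover $v\notin V$, since otherwise $V$-saturation would place $v$ in $X$. The assignment $v\mapsto 1_R$ and every other generator $\mapsto 0$ therefore extends to a well-defined ring homomorphism $L^V_R(E)\to R$, and Lemma 2.7 delivers the conclusion. If (2) holds but (1) does not (so $X=E^0$ and $E$ has an exclusive maximal cycle $\omega$ with basepoint $v$), then the proof of Theorem 1 shows $M(v)=\text{Vert}(\omega)$, which combined with the exclusivity of $\omega$ implies that the only edges whose range lies in $\text{Vert}(\omega)$ are the edges of $\omega$ itself. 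This permits me to define a ring homomorphism $L^V_R(E)\to L^{V\cap\text{Vert}(\omega)}_R(\omega)$ that fixes the vertices, edges, and ghost edges of $\omega$ and sends every other generator to $0$. Composing with the canonical surjection $L^{V\cap\text{Vert}(\omega)}_R(\omega)\twoheadrightarrow L^{\text{Vert}(\omega)}_R(\omega)\cong M_n(R[t,t^{-1}])$ yields a ring homomorphism $L^V_R(E)\to M_n(R[t,t^{-1}])$. The codomain satisfies the rank condition because $t\mapsto 1$ gives a ring homomorphism $R[t,t^{-1}]\to R$, Lemma 2.7 transfers the rank condition to $R[t,t^{-1}]$, and Proposition 2.18 transfers it to $M_n(R[t,t^{-1}])$. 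A final application of Lemma 2.7 finishes the argument.

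The main obstacle is in step (ii)$\Rightarrow$(iii): organising the chain of applications of Lemma 2.9 so that the same-dimension, orthogonality, and idempotency hypotheses are met at every step requires careful matrix bookkeeping, though none of the individual steps is conceptually deep.
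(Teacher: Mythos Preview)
There is a recurring gap: you repeatedly import facts from Theorem~1 that depend on $X=T(C)$ (the hereditary closure of the cycle vertices), but in the present theorem $X$ is the hereditary \emph{and $V$-saturated} closure, which can be strictly larger. The clearest failure is in (iii)$\Rightarrow$(i), Case~(1). Your map $L^V_R(E)\to R$ sending a minimal $v\in E^0\setminus X$ to $1_R$ and every other generator to $0$ need not respect the relation $e^\ast e=r(e)$ when some edge $e$ has range $v$; minimality of $v$ controls only edges \emph{leaving} $v$, not edges entering it. For a decisive obstruction, take $E$ to be the single edge $w\to u$ with $V=\{w\}$: then $X=\emptyset$, and $L^V_R(E)\cong M_2(R)$, which for $R$ a field admits no unital ring homomorphism to $R$ at all. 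The paper instead (after first disposing of infinite emitters in $E^0\setminus X$ via Proposition~2.20) splits $L^V_R(E)$ as a ring direct sum with $L^{V\setminus X}_R(E/X)$ as a summand (Lemma~4.7) and applies Lemma~2.13 to that finite acyclic piece.

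The same oversight undermines Case~(2) and (ii)$\Rightarrow$(iii). In Case~(2), the equality $M(v)=\mathrm{Vert}(\omega)$ borrowed from Theorem~1 required $T(C)=E^0$; under the weaker hypothesis $X=E^0$, a regular vertex $u\in V$ emitting a single edge into $\omega$ can lie in $X\setminus T(C)$ via saturation, and that edge (range in $\mathrm{Vert}(\omega)$, not an edge of $\omega$) again breaks $e^\ast e=r(e)$ under your projection, so the map to $L^{V\cap\mathrm{Vert}(\omega)}_R(\omega)$ is ill-defined. The paper avoids any such quotient and uses the F\o lner-type criterion of Proposition~2.20 directly. In (ii)$\Rightarrow$(iii), your paths $\mu_{ij}$ only reach vertices in $T(C)$, so ``$w\precsim v_{i(w)}$ through the path $\mu_{i(w),j(w)}$'' is unjustified for $w\in X\setminus T(C)$; the paper first invokes Proposition~4.3(ii) to reduce to the subgraph $E_{T(C)}$, after which your argument would go through.
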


We will present a single proof that will establish both theorems. As related in the introduction, our argument tracks the one for  Ara, Li, Lled\'o, and Wu's result \cite[Theorem 5.10]{Amenability} characterizing amenable Leavitt path algebras very closely. The chief difference is our use of Proposition 2.20 in conjunction with their reasoning. 

\begin{proof}[Proof of Theorems 4.8 and 4.9] Throughout the proof, we will write $L:=L^V_R(E)$, $X^\prime:=E^0\setminus X$, $L_{X^\prime}:=L^{V\setminus X}_R(E/X)$, and $\Sigma$ for a particular $R$-basis for $L$ of the type described in Proposition 2.2. 
Let $\mathcal{C}$ represent either the rank condition or the strong rank condition. In the case that $\mathcal{C}$ stands for the rank condition, then $\mathcal{C}_1$ will represent the property that the generating number is larger than one. On the other hand, if $\mathcal{C}$ is interpreted as the strong rank condition, then $\mathcal{C}_1$ will be taken to be the property that there is no 
$L$-module monomorphism $L^2\to L$. With this notation, the statements whose equivalence is to be established can be expressed as follows.
\begin{enumerate*}
\item $L$ satisfies $\mathcal{C}$.
\item $R$ satisfies $\mathcal{C}$, and $L$ satisfies $\mathcal{C}_1$. 
\item $R$ satisfies $\mathcal{C}$, and at least one of the statements (1) or (2) holds:
\begin{enumeratenum}
\item $X\neq E^0$;
\item $E$ contains an exclusive maximal cycle.
\end{enumeratenum}
\end{enumerate*}

The implication (i)$\implies$(ii) follows from the definitions of the rank condition and strong rank condition, as well as Lemma 2.7 and Corollary 2.17. Our plan is to first prove (iii)$\implies$(i) and then (ii)$\implies$(iii). 
To establish the former implication, we assume that $R$ satisfies $\mathcal{C}$.  First we show that, if $X^\prime$ has an infinite emitter, then $L$ satisfies $\mathcal{C}$. Suppose that $X^\prime$ has an infinite emitter, and let $y$ be an infinite emitter in $X^\prime$ that is maximal with respect to the quasiordering restricted to $X'$. 
Take $K$ to be a finite subset of $\Sigma$. Then there is an edge $e\in s^{-1}(y)$ such that $e$ is not contained in any path $\beta$ for which $\alpha \beta^\ast\in K$ for some path $\alpha$. Let 
\[F:=\{\mu\, e\ :\ \mu\in {\rm Path}(E)\ \mbox{and}\ r(\mu)=y\}.\]
Observe that none of the paths in $F$ contain a cycle or an infinite emitter unequal to $y$. As a consequence, the set $F$ must be finite. 

We will show that $KF\setminus \{0\}\subseteq F$, which will imply, by Proposition~2.20, that $L$ satisfies $\mathcal{C}$. To accomplish this, let $\alpha, \beta\in {\rm Path}(E)$ such that $\alpha \beta^\ast\in K$, and let $\mu$ be a path with terminus $y$. If $\alpha \beta^\ast\mu e\neq 0$, then there must be a path $\lambda$ such that $\mu=\beta\lambda$. In this case, we have $\alpha\beta^\ast\mu e=\alpha\lambda e\in F$, proving our claim. 

Henceforth we will assume that $X^\prime$ fails to contain any infinite emitters. Suppose now that statement (1) is true. 
By Lemma 4.7,  we have $L\cong L_{X^\prime}\oplus I(X)$. Also, Lemma~2.13 implies that $L_{X^\prime}$ satisfies $\mathcal{C}$. It follows, then, from Lemma~2.8 that $L$ satisfies~$\mathcal{C}$. 

To complete the proof of (iii)$\implies$(i), we suppose that (2) holds. Arguing exactly as in the proof of \cite[Theorem 5.10]{Amenability}, we let $\omega$ be a maximal cycle that is exclusive, and let $v$ be the basepoint of $\omega$. The subgraph $E_{M(v)}$, therefore, contains $\omega$ and fails to contain any cycle with an edge set different from that of $\omega$. 
We maintain further that there are no infinite emitters in $E_{M(v)}$. By assumption, $M(v)\setminus X$ has no infinite emitters in $E$; hence we must only show that $X$ fails to contain any infinite emitters for $E_{M(v)}$. Notice that the presence of an edge in $E_{M(v)}^1\setminus {\rm Edge}(\omega)$ emanating from a vertex on $\omega$ would give rise to a cycle in $E_{M(v)}$ with a set of edges different from ${\rm Edge}(\omega)$. This means that each vertex in $M(v)\cap {\rm Vert}(\omega)$ emits merely a single edge in $E_{M(v)}$. 
Let $C$ be the set of all the vertices in $E^0$ that are on cycles. We argue that $T(C)\cap M(v)={\rm Vert}(\omega)$. To show this, suppose that $x\in T(C)\cap M(v)$. Then there is a cycle $\xi$ and a path $\delta$ that starts on $\xi$ and ends at $x$. But the maximality of $\omega$, then, yields a path from $v$ to $\xi$. These circumstances place $x$ on a cycle, so that $x\in {\rm Vert}(\omega)$. Hence it is indeed true that $T(C)\cap M(v)={\rm Vert}(\omega)$. Furthermore, $\Lambda^V_k\left (T\left (C\right )\right )\backslash T(C)$ contains, by definition, only regular vertices for every nonnegative integer $k$.  Since $X=\bigcup_{k=0}^\infty \Lambda^V_k\left(T\left (C\right )\right )$, this completes the proof that $X$ does not contain any infinite emitters for $E_{M(v)}$. 

The absence of any infinite emitters in $E_{M(v)}$ and any cycles with edge sets different from ${\rm Edge}(\omega)$ guarantees that there are only finitely many paths in the subgraph that do not contain $\omega$. Making use of this fact, we will apply Proposition 2.20 to show that $L$ fulfills $\mathcal{C}$. For this purpose, we let $K:=\{\alpha_i\beta^\ast_i:i=1,\dots, n\}$ be a finite subset of $\Sigma$ and let $p$ be a real number greater than $1$.  Let $N_1$ be a positive integer such that $N_1|\omega|\geq  {\rm max}\{|\alpha_i|,|\beta_i|\}$ for all $i=1,\dots, n$. Next pick an integer $N_2>N_1$ such that $\frac{N_1+N_2+1}{N_2-N_1} < p$. 

Denote by $P$ the set of all paths that end at $v$ and that do not contain $\omega$. Then $P$ is finite and nonempty. Define $F$ to be the set of all paths of the form $\gamma \omega^k$ such that $\gamma\in P$ and $N_1<k\leq N_2$.  We will show that 
\begin{equation} KF\setminus \{0\}\subseteq \{\gamma \omega^k\ :\ 1\leq k\leq N_1+N_2+1\ \ \mbox{and}\ \ \gamma\in P\}.\end{equation}
To establish this containment, let $i\in \mathbb Z\cap [1,n], \gamma\in P$, and $k\in \mathbb Z\cap (N_1,N_2]$ such that $\left (\alpha_i\beta_i^\ast\right )\left (\gamma\omega^k\right )\neq 0$. Since $k|\omega|>|\beta_i|$, the relations (2.1) imply that 
\[\left ( \alpha_i\beta_i^\ast\right )\left (\gamma\omega^k\right )=\alpha_i\eta\omega^{k'}\]
for some $\eta\in P$ and integer $k'\in [1, N_2]$. If $\alpha_i$ fails to contain $\omega$, then it is clearly the case that $\left (\alpha_i\beta_i^\ast\right )\left (\gamma\omega^k\right )$ lies in the set on the right of the containment (4.1). Suppose that $\alpha_i$ contains $\omega$. Then there are a positive integer $n\leq N_1$ and paths $\lambda, \mu$ such that $\alpha_i=\lambda\omega^n\mu$ and $\lambda, \mu$ both fail to contain $\omega$. Hence
\[\left (\alpha_i\beta_i^\ast\right )\left (\gamma\omega^k\right )=\lambda\omega^n\mu\eta\omega^{k'}.\]
Also, we have $s(\mu)=r(\eta)=v$. Thus, since $\omega$ is exclusive, it follows that either $\mu=\eta=v$ or $\mu\eta=\omega$. As a consequence, we have that either $\left (\alpha_i\beta_i^\ast\right )\left (\gamma\omega^k\right )=\lambda\omega^{n+k'}$ or $\left (\alpha_i\beta_i^\ast\right )\left (\gamma\omega^k\right )=\lambda\omega^{n+k'+1}$.
Therefore (4.1) holds. 

The containment (4.1) implies that $|KF\setminus \{0\}|\leq |P|(N_1+N_2+1)$. Consequently, since $|F|=|P|(N_2-N_1)$, we conclude that
$|KF\setminus \{0\} |< p|F|$. Therefore, by Proposition~2.20, the ring $L$ satisfies $\mathcal{C}$. This completes the proof of (iii)$\implies$(i). 

It remains to establish the implication (ii)$\implies$(iii). We will prove the contrapositive. Thus we assume that  $X=E^0\neq \emptyset$ and every maximal cycle of $E$ is nonexclusive. Our aim is to show that
${\rm gn}(L)=1$, which will imply that $L$ fails to satisfy $\mathcal{C}_1$. 
According to Proposition 4.3(ii), it will follow that ${\rm gn}(L)=1$ if we can show that ${\rm gn}\left (L_R^{V \cap T(C)}\left (E_{T(C)}\right )\right ) = 1$. Hence there is no real loss of generality in assuming that $E=T(C)$.  

Since every maximal cycle in $E$ is nonexclusive, we can find distinct maximal cycles $\omega_1,\dots,\omega_n,\xi_1,\dots,\xi_n$ with the following properties.
\begin{itemize}
\item For $i=1,\dots, n$, the cycles $\omega_i$ and $\xi_i$ are based at the same vertex $v_i$.
\item $\displaystyle{v_i=v_j \Longleftrightarrow i=j}$.
\item For every cycle $\theta$, there is an integer $i\in [1,n]$ such that $\omega_i\geq \theta$.
\end{itemize}

Notice that $\displaystyle{v_i\oplus v_i\precsim v_i}$ for $i=1,\dots, n$. This is a consequence of the matrix equation

\[\begin{pmatrix}  \omega_i^\ast\\ 
  \xi_i^\ast \end{pmatrix} \begin{pmatrix}  \omega_i &  \xi_i \end{pmatrix}=\begin{pmatrix} v_i & 0\\
0 & v_i\end{pmatrix}.\]
Set $x:=\sum_{i=1}^n v_i$. Invoking Lemma 2.9(i)(iii)(iv)(v), we surmise that $\displaystyle{x\oplus x\precsim x}$. We claim next that $v\precsim x$ for every vertex $v$. To verify this, take $v$ to be an arbitrary vertex. We can find a path $\gamma$ from $v_k$ to $v$ for some integer $k\in [1,n]$. We then have $\gamma^\ast v_k\gamma=v$, which means  $v\precsim v_k$. It follows from Lemma 2.9(ii)(v) that $v\precsim x$. These same two parts of Lemma 2.9, together with parts (iii) and (iv) of that lemma, then allow us to obtain the chain of relations
\[1\oplus 1\precsim \bigoplus_{v\in E^0} v\oplus \bigoplus_{v\in E^0} v\precsim \underbrace{x\oplus\cdots \oplus x}_{2|E^0|}\precsim x\precsim 1.\]
Thus ${\rm gn}(L)=1$ by Proposition 2.12(iv). 
\end{proof}

Below we state the special cases of Theorems 4.8 and 4.9 for Cohn path rings; the second one generalizes \cite[Corollary 3.18]{UGN}. 

\begin{corollary}  Let $R$ be a ring and E a directed graph with finitely many vertices.  Let $X$ be the smallest hereditary subset of $E^0$ containing all the vertices of all the cycles of $E$. Then the statements (i), (ii), and (iii) below are equivalent. 
\begin{enumerate*}
\item $C_R(E)$ satisfies the strong rank condition.
\item $R$ satisfies the strong rank condition, and there is no $C_R(E)$-module monomorphism $C_R(E)\oplus C_R(E)\to C_R(E)$. 
\item $R$ satisfies the strong rank condition, and at least one of the statements (1) or (2) holds:
\begin{enumeratenum}
\item $X\neq E^0$;
\item $E$ contains an exclusive maximal cycle.
\end{enumeratenum}
\end{enumerate*}
\end{corollary}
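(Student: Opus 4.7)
The plan is to deduce Corollary 4.10 as the special case $V=\emptyset$ of Theorem 4.8. By the very definition of the Cohn path ring given in Section 2, we have $C_R(E)=L^\emptyset_R(E)$, so condition (i) of the corollary is literally condition (i) of Theorem 4.8 with $V=\emptyset$, and similarly for (ii).

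The only point that requires a brief verification is that the set $X$ appearing in Corollary 4.10 agrees with the set $X$ appearing in Theorem 4.8 when $V=\emptyset$. Recall that, by Definition 4.2, a subset $X\subseteq E^0$ is $V$-saturated if, for every $v\in V$, the implication $r\bigl(s^{-1}(v)\bigr)\subseteq X \Longrightarrow v\in X$ holds. When $V=\emptyset$ this implication is quantified over the empty set, so it is vacuously satisfied by every $X\subseteq E^0$; that is, every subset of $E^0$ is $\emptyset$-saturated. Hence the smallest hereditary and $\emptyset$-saturated subset of $E^0$ containing all the vertices of all the cycles of $E$ coincides with the smallest hereditary subset of $E^0$ containing those vertices, which is exactly the set $X$ of Corollary 4.10.

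Once this identification is in place, the three conditions of Corollary 4.10 are verbatim the three conditions of Theorem 4.8 with $V=\emptyset$, and the claimed equivalences follow at once. There is no real obstacle here — the entire content of the corollary is already packed into Theorem 4.8, and the only work is the one-line observation that $\emptyset$-saturation imposes no condition.
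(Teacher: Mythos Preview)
Your proposal is correct and matches the paper's approach exactly: the paper presents Corollary 4.10 without a separate proof, introducing it simply as the special case of Theorem 4.8 for Cohn path rings (i.e., $V=\emptyset$). Your explicit verification that every subset of $E^0$ is vacuously $\emptyset$-saturated, so that the two definitions of $X$ coincide, is precisely the observation needed and is left implicit in the paper.
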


\begin{corollary}  Let $R$ be a ring and E a directed graph with finitely many vertices.  Let $X$ be the smallest hereditary subset of $E^0$ that contains all the vertices of all the cycles of $E$. Then the statements (i), (ii), and (iii) below are equivalent. 
\begin{enumerate*}
\item $C_R(E)$ satisfies the rank condition.
\item $R$ satisfies the rank condition, and ${\rm gn}\left (C_R\left (E\right )\right )>1$. 
\item $R$ satisfies the rank condition, and at least one of the statements (1) or (2) holds:
\begin{enumeratenum}
\item $X\neq E^0$;
\item $E$ contains an exclusive maximal cycle.
\end{enumeratenum}
\end{enumerate*}
\end{corollary}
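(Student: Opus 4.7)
The plan is to derive Corollary 4.11 directly as a specialization of Theorem 4.9 to the choice $V = \emptyset$. By definition, $C_R(E) = L^\emptyset_R(E)$, so statements (i) and (ii) of the corollary are literally statements (i) and (ii) of Theorem 4.9 in this case.

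The only thing requiring verification is that the set $X$ in the corollary matches the set $X$ in Theorem 4.9 when $V = \emptyset$. Recall that a subset $Y \subseteq E^0$ is $V$-saturated if, for every $v \in V$, the containment $r(s^{-1}(v)) \subseteq Y$ implies $v \in Y$. When $V = \emptyset$ this is a vacuous condition, so every subset of $E^0$ is $\emptyset$-saturated. Consequently, the smallest hereditary and $\emptyset$-saturated subset of $E^0$ containing all vertices of all cycles is simply the smallest hereditary subset containing these vertices, which is exactly the $X$ of the corollary.

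With this identification, the three statements of Corollary 4.11 reduce word-for-word to the three statements of Theorem 4.9 for $V = \emptyset$, and so the equivalence follows at once. There is no genuine obstacle here; the entire proof consists of invoking Theorem 4.9 and noting that $\emptyset$-saturation imposes no extra constraint on $X$.
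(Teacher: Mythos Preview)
Your proposal is correct and matches the paper's approach exactly: the paper presents Corollary~4.11 as the special case $V=\emptyset$ of Theorem~4.9 without further comment, and your observation that $\emptyset$-saturation is vacuous (so the two definitions of $X$ coincide) is precisely the identification needed.
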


\begin{remark}{\rm In view of Lemma 2.3(ii), the statement of Theorem 4.8 remains true if the three occurrences  of ``the strong rank condition" are replaced by ``the left strong rank condition" 
and the phrase ``no $L_R^V(E)$-module monomorphism" in (ii) is replaced by ``no left $L_R^V(E)$-module monomorphism." A similar observation can be made about Corollary 4.10.}
\end{remark}

Next we state the results from \cite[Theorem 5.10]{Amenability} about amenability and exhaustive amenability for Leavitt path algebras. We express them in a form that applies to relative Leavitt path algebras and, for the first result, that also includes our results about the rank condition and strong rank condition.  

\begin{theorem} Let $\mathbb K$ be a field and $E$ a directed graph with $E^0$ finite. Let $V$ be a subset of ${\rm Reg}(E)$. Define $X$ to be the smallest hereditary and $V$-saturated subset of $E^0$ that contains all the vertices of all the cycles of $E$. Then the statements (i), (ii), (iii), (iv), (v), and (vi) below are equivalent, where $L:=L_{\mathbb K}^V(E)$.
\begin{enumerate*}
\item $L$ is amenable.
\item  $L$ satisfies the strong rank condition.
\item There is no $L$-module monomorphism $L^2\to L$. 
\item $L$ satisfies the rank condition.
\item ${\rm gn}(L)>1$. 
\item $X\neq E^0$, or $E$ contains an exclusive maximal cycle.
\end{enumerate*}
\end{theorem}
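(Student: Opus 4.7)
My strategy is to reduce Theorem 4.12 to the single new implication (vi) $\Rightarrow$ (i). Specializing Theorem 4.8 and Theorem 4.9 to $R = \mathbb{K}$, which trivially satisfies both the rank and strong rank conditions, immediately yields all the equivalences among (ii), (iii), (iv), (v), and (vi). The implication (i) $\Rightarrow$ (ii) is exactly Proposition 2.24.

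For (vi) $\Rightarrow$ (i), I would mirror the case analysis used in the proof of (iii) $\Rightarrow$ (i) in Theorems 4.8 and 4.9, substituting Lemma 2.25 for Proposition 2.20, just as the proof of Theorem 2 was obtained from that of Theorem 1. Take $\Sigma$ to be the $\mathbb{K}$-basis of $L := L^V_{\mathbb{K}}(E)$ from Proposition 2.2. When $E^0 \setminus X$ contains an infinite emitter $y$, for any finite $K \subseteq \Sigma$ one picks an edge $e \in s^{-1}(y)$ not appearing in any $\beta$ with $\alpha \beta^\ast \in K$ and lets $F = \{\mu e : r(\mu) = y\}$; then $KF \setminus \{0\} \subseteq F$, so Lemma 2.25 applies. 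When $E$ has an exclusive maximal cycle $\omega$ with basepoint $v$, the set $F = \{\gamma \omega^k : \gamma \in P,\ N_1 < k \leq N_2\}$ constructed in the proof of Theorems 4.8 and 4.9, with $N_2$ chosen so that $(N_1 + N_2 + 1)/(N_2 - N_1) < p$, yields $|KF \setminus \{0\}| < p\, |F|$, so Lemma 2.25 again supplies amenability.

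The remaining subcase is $X \neq E^0$ with $E^0 \setminus X$ free of infinite emitters, where the earlier proof invoked the decomposition $L \cong L^{V \setminus X}_{\mathbb{K}}(E/X) \oplus I(X)$ of Lemma 4.7 together with Lemma 2.8. Here the first summand $L^{V \setminus X}_{\mathbb{K}}(E/X)$ is finite-dimensional over $\mathbb{K}$, since $E/X$ has neither cycles nor infinite emitters. I would then invoke the elementary observation that any unital $\mathbb{K}$-algebra $L$ possessing a finite-dimensional two-sided ideal summand $A$ is amenable: for any finite-dimensional $U \subseteq L$ and any $p > 1$, the choice $V := A$ gives $UV \subseteq L \cdot A \subseteq A$, so $\dim_{\mathbb{K}}(UV) \leq \dim_{\mathbb{K}} A = \dim_{\mathbb{K}} V < p\, \dim_{\mathbb{K}} V$.

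I expect no serious difficulty beyond routine bookkeeping: the two constructions of $F$ transfer directly from the proof of Theorems 4.8 and 4.9, and the verification that the products in $KF$ reduce correctly under the relations (2.1) is identical to what is carried out there. The only genuinely new ingredient is the trivial amenability observation just described, which finesses the fact that amenability does not transfer across the two factors of a direct sum the way the strong rank condition does in Lemma 2.8; this is what allows us to conclude amenability of $L$ from finite-dimensionality of just one summand, without having to argue anything at all about $I(X)$.
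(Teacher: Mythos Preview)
Your proposal is correct. The paper does not write out a separate proof of Theorem~4.12; it presents the statement as the amenability characterization \cite[Theorem~5.10]{Amenability} (the equivalence of (i) and (vi), recast for relative Leavitt path algebras) combined with Theorems~4.8 and~4.9 specialized to $R=\mathbb{K}$. Your argument is thus a more self-contained variant: you reduce, as the paper implicitly does, to the single implication (vi)$\Rightarrow$(i), but then establish it inside the paper's own framework by substituting Lemma~2.25 for Proposition~2.20 in the F{\o}lner-set constructions already carried out in the proof of Theorems~4.8 and~4.9---the same maneuver by which Theorem~2 is obtained from Theorem~1. The one ingredient not already present in the paper is your observation that a nonzero finite-dimensional two-sided ideal summand forces amenability (take that summand as the subspace $V$); this is needed because the corresponding step in the rank-condition argument uses Lemma~2.8, which has no analogue for amenability. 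It is a clean way to dispose of that subcase without analyzing $I(X)$. One minor point of bookkeeping: in the exclusive-maximal-cycle subcase, the finiteness of the set $P$ of paths ending at $v$ and avoiding $\omega$ relies on $E^0\setminus X$ having no infinite emitters, so this subcase should be entered only after the infinite-emitter subcase has been handled; your ``mirror the case analysis'' phrasing covers this, but the order in which you list the subcases obscures it slightly.
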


The part of Theorem 4.12 that is drawn from \cite[Theorem 5.10]{Amenability} is the equivalence of (i), (v), and (vi) for the case $V={\rm Reg}(E)$. 

\begin{theorem}[{Ara, Li, Lled\'o, Wu \cite[Theorem 5.10]{Amenability}}]  Let $\mathbb K$ be a field and $E$ a directed graph with $E^0$ finite and nonempty. Let $V$ be a subset of ${\rm Reg}(E)$. Define $X$ to be the smallest hereditary and $V$-saturated subset of $E^0$ that contains all the vertices of all the cycles of $E$. Then the statements (i) and (ii) below are equivalent.
\begin{enumerate*}
\item $L^V_{\mathbb K}(E)$ is exhaustively amenable.
\item  At least one of the following three statements holds:
\begin{enumeratenum}
\item $X=\emptyset$;
\item $E^0\setminus X$ contains an infinite emitter;
\item $E$ contains an exclusive maximal cycle.
\end{enumeratenum}
\end{enumerate*}
\end{theorem}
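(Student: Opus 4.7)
The plan is to prove both implications of the equivalence by extending the arguments from the proofs of Theorems~4.8 and~4.9, so that the witness sets they produce can be chosen to be arbitrarily large and thus feed the second statement of Lemma~2.25. For (ii)$\implies$(i), I would rerun the (iii)$\implies$(i) constructions of Theorems~4.8 and~4.9 case by case. When $E$ contains an exclusive maximal cycle $\omega$, the set $F = \{\gamma \omega^k \ :\ \gamma \in P,\, N_1 < k \leq N_2\}$ built there already has $|F| = |P|(N_2 - N_1)$, so I may enlarge $N_2$ at will while the estimate $|KF \setminus \{0\}| \leq (N_1 + N_2 + 1)|P| < p|F|$ is preserved. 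When $E^0 \setminus X$ contains an infinite emitter, I would pick a maximal such vertex $y$ in $E^0 \setminus X$, select $n$ distinct edges $e_1,\dots,e_n \in s^{-1}(y)$ that do not occur in any $\beta$ with $\alpha \beta^* \in K$, and put $F := \{\mu e_i \ :\ r(\mu) = y,\ 1 \leq i \leq n\}$. Paths ending at $y$ contain neither cycles (all cycle vertices are in $X$) nor other infinite emitters (by maximality of $y$), so $F$ is finite; one takes $n$ arbitrarily large and notes $KF \setminus \{0\} \subseteq F$ immediately from the relations~(2.1).

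For (i)$\implies$(ii), I would prove the contrapositive: assume $E^0 \setminus X$ has no infinite emitter and $E$ has no exclusive maximal cycle. If $X = E^0$, Theorem~4.13 already gives that $L := L^V_{\mathbb K}(E)$ fails to be amenable, hence fails to be exhaustively amenable. Suppose now $X \neq E^0$; Lemma~4.7 then yields the $\mathbb K$-algebra decomposition $L \cong L^{V \setminus X}_{\mathbb K}(E/X) \oplus I(X)$, whose first summand is finite-dimensional (the quotient graph $E/X$ has no cycles and no infinite emitters), of $\mathbb K$-dimension $d$, say. Setting $\epsilon := \sum_{x \in X} x$, Lemmas~4.1 and~4.6 together imply that $\epsilon$ is a full idempotent in $I(X)$ with $\epsilon I(X) \epsilon = \epsilon L \epsilon \cong L^{V \cap X}_{\mathbb K}(E_X)$, so $I(X)$ and $L^{V \cap X}_{\mathbb K}(E_X)$ are Morita equivalent as unital $\mathbb K$-algebras. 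A direct verification shows that the cycles of $E_X$ coincide with those of $E$, that the hereditary $(V \cap X)$-saturated closure in $E_X$ of the set of cycle vertices equals $X$ (the saturation processes in $E$ and $E_X$ agree because vertices added at any stage have all their outgoing edges pointing into $X$), and that $E_X$ inherits the absence of an exclusive maximal cycle; Theorem~4.13 applied to $E_X$ therefore gives ${\rm gn}\left(L^{V \cap X}_{\mathbb K}(E_X)\right) = 1$.

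By Morita invariance of the generating number, ${\rm gn}(I(X)) = 1$ as well, and a properly infinite ring cannot be amenable (it already fails the strong rank condition, contradicting the conclusion of Proposition~2.24). Fix a finite-dimensional $U \subseteq I(X)$ witnessing non-amenability of $I(X)$ with constant $p=3$, and let $\pi : L \to I(X)$ be the projection onto the second summand. For any finite-dimensional $V \subseteq L$ with $\dim_{\mathbb K} V > 3d$, I would have $\dim_{\mathbb K}\pi(V) \geq \dim_{\mathbb K} V - d > 2d$, and since $U \subseteq I(X)$ annihilates the $L^{V \setminus X}_{\mathbb K}(E/X)$-summand of $L$, it follows that $UV = U\pi(V)$ and hence
\[
 \dim_{\mathbb K}(UV) \;\geq\; 3\bigl(\dim_{\mathbb K} V - d\bigr) \;\geq\; 2 \dim_{\mathbb K} V,
\]
so Proposition~2.23(ii) gives that $L$ is not exhaustively amenable. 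The main obstacle will be the transfer of non-amenability across the Morita equivalence $I(X) \sim L^{V \cap X}_{\mathbb K}(E_X)$, since the paper establishes Morita invariance only for the (strong) rank condition (Proposition~2.18); I would handle it either by citing Morita invariance of the generating number as a standard fact and routing through Proposition~2.24 as above, or by constructing an explicit non-amenability witness in $I(X)$ from a proper-infiniteness witness $a_1,a_2,b_1,b_2 \in \epsilon L \epsilon$ via the corner isomorphism.
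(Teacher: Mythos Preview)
The paper does not provide its own proof of this theorem; it simply attributes the result to \cite[Theorem~5.10]{Amenability}. So there is no in-paper argument to compare against, and I assess your outline on its own merits.

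Your (ii)$\Longrightarrow$(i) direction is correct: both F\o lner-type constructions from the proof of Theorems~4.8 and~4.9 produce subsets $F$ of the basis $\Sigma$ whose cardinality can be made arbitrarily large (via $N_2\to\infty$ in the exclusive-cycle case, and via choosing more edges $e_1,\dots,e_n$ in the infinite-emitter case), so the second clause of Lemma~2.25 applies; note that $L^V_{\mathbb K}(E)$ is infinite dimensional in either situation, which is needed for Proposition~2.22.

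Your (i)$\Longrightarrow$(ii) direction is essentially right, but two points require care. First, both invocations of ``Theorem~4.13'' should read ``Theorem~4.12'': you are using the equivalence between amenability (respectively ${\rm gn}>1$) and the graph conditions, which is Theorem~4.12; Theorem~4.13 is the result you are proving. Second, your first proposed fix for the ``main obstacle'' misfires: the generating number is \emph{not} Morita invariant in general. What you actually need, and what your second option delivers, is that \emph{proper infiniteness} passes from a full corner to the ambient unital ring. Concretely, if $e$ is a full idempotent in a unital ring $S$ with $e\oplus e\precsim e$ witnessed inside $eSe\subseteq S$, then writing $1_S=\sum_k s_k e t_k$ yields $1_S\precsim m\cdot e$ for some $m$, whence $2\cdot 1_S\precsim 2m\cdot e\precsim e\precsim 1_S$ by Lemma~2.9, so ${\rm gn}(S)=1$. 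With $S=I(X)$ and $e=\epsilon$ (full in $I(X)$ via Lemma~4.6, exactly as you indicate), this gives ${\rm gn}(I(X))=1$; Proposition~2.24 then yields non-amenability of $I(X)$, and your dimension estimate through Proposition~2.23(ii) completes the argument.
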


\begin{remark}{\rm Note that, for any field $\mathbb K$ and directed graph $E$, we have, by Lemma 2.3(ii), that $\left (L_{\mathbb K}^V(E)\right )^{\rm op}\cong L^V_{\mathbb K}(E)$ for any set $V\subseteq \text{Reg}(E)$. As a result, a relative Leavitt path algebra over a field is right amenable (respectively, exhaustively right amenable) if and only if it is amenable (respectively, exhaustively amenable). Similarly, it satisfies the left strong rank condition if and only if it satisfies the strong rank condition.}
\end{remark}

We conclude the paper with an example illustrating the utility of our results about relative Leavitt path rings.

\begin{example}{\rm Let $E$ be the graph with $E^0:=\{v_0, v_1, v_2, v_3, v_4, v_5, v_6\}$ and edges as described by the following diagram. 

\begin{displaymath}
\xymatrix{
 & \bullet_{v_0} \ar@/^0.5pc/@{->}[d] &  &  & \\
\bullet_{v_1} \ar@{->}[r] \ar@{->}[drr] \ar@/_0.8pc/@{->}[drr]& \bullet_{v_2} \ar@{->}[r] \ar@/^0.5pc/@{->}[u] & \bullet_{v_3} \ar@{->}[r] \ar@{->}[ul] & \bullet_{v_4} \ar@{->}[r] \ar@(ul,ur) & \bullet_{v_5} \\
 & & \bullet_{v_6} \ar@{->}[urr] & &
}
\end{displaymath}

The set of vertices of $E$ that are located on cycles is $C:=\{v_0, v_2, v_3, v_4\}$, and $T(C)=\{v_0, v_2, v_3, v_4, v_5\}$.
Observe also that the saturated closure of $T(C)$ is $E^0$. Therefore, according to Theorem 4.9, $L_R(E)$ does not satisfy the rank condition for any ring $R$. However, by Corollary 4.11, $C_R(E)$ satisfies the rank condition if and only if $R$ does, and the same holds for the strong rank condition by Corollary 4.10. Moreover, Theorem 4.12 implies that, for every field $\mathbb K$, the algebra $L_{\mathbb K}(E)$ is nonamenable but $C_{\mathbb K}(E)$ is amenable. The latter algebra, however, fails to be  exhaustively amenable by Theorem 4.13.

We also point out that the results of Section~3 show that $\mathbb KE$ is both exhaustively amenable and exhaustively right amenable. Furthermore, $RE$ satisfies the left or right strong rank condition if and only if $R$ satisfies the respective condition. 
}
\end{example}

\vspace{20pt}

\noindent {\sc Karl Lorensen}\\
Department of Mathematics and Statistics\\
Pennsylvania State University, Altoona College\\
Altoona, PA 16601, USA\\
E-mail: {\tt kql3@psu.edu}
\vspace{20pt}

\noindent {\sc Johan \"Oinert}\\
Department of Mathematics and Natural Sciences\\
Blekinge Institute of Technology\\
SE-37179 Karlskrona, Sweden\\
E-mail: {\tt johan.oinert@bth.se}
\vspace{3pt}

\noindent {\it and}
\vspace{3pt}

\noindent Department of Engineering\\
University of Sk\"{o}vde\\
SE-54128 Sk\"{o}vde, Sweden

\end{document}